\numberwithin{equation}{section}
\numberwithin{figure}{section}
\newcommand{\eps}{\varepsilon}
\newcommand{\CM}{{\mathcal M}}
\newcommand{\CO}{{\mathcal O}}
\newtheorem{theorem}{Theorem}[section]
\newtheorem{lemma}[theorem]{Lemma}
\newtheorem{definition}[theorem]{Definition}
\newtheorem{remark}[theorem]{Remark}
\newtheorem{corollary}[theorem]{Corollary}
\begin{document}
%
%
%
%
%
\title[Motion of a droplet for the mass-conserving Stochastic Allen-Cahn
equation]{Motion of a droplet for the mass-conserving\\
Stochastic Allen-Cahn equation}
\author[Antonopoulou, Bates, Bl\"omker, Karali]{
D.C.~Antonopoulou$^{\$*}$, P.W.~Bates$^{\ddag}$, D.
Bl\"omker$^{\sharp}$, G.D.~Karali$^{\dag*}$ }

%
%
%
%
%
%
\thanks
{P.B. was supported in part by NSF DMS 0908348. P.B. and G.K.
were supported in part by the IMA at the University of Minnesota, where
this work was completed. D.A. and G.K. were supported by
`Aristeia' (Excellence) grant, 193, ${\rm \Sigma\Pi A}$ 00086.}
\thanks
{$^{\ddag}$ Department of Mathematics, Michigan State University,
East Lansing, MI 48824, USA}
\thanks{$^{\sharp}$ Institut f\"ur Mathematik, Universit\"at Augsburg, 86135 Augsburg, Germany}
\thanks
{$^{\dag}$ Department of Mathematics and Applied Mathematics,
University of Crete, GR--714 09 Heraklion, Greece.}
\thanks
{$^{\$}$ Department of Mathematics, University of Chester,
Thornton Science Park, CH2 4NU, UK}
\thanks
{$^{*}$ Institute of Applied and Computational Mathematics,
FORTH, GR--711 10 Heraklion, Greece.}
%

%
%

\subjclass{35K55, 35K40, 60H30, 60H15.}
%
%
\begin{abstract}
 We study the stochastic mass-conserving Allen-Cahn equation
posed on a bounded domain of $\mathbb{R}^2$ with additive spatially smooth
space-time noise. This equation associated with a small
positive parameter $\eps$ describes the stochastic motion of a
small almost semicircular droplet attached to domain's boundary
and moving towards a point of locally maximum curvature. We apply
It\^o calculus to derive the stochastic dynamics of the droplet by
utilizing the approximately invariant manifold introduced by
Alikakos, Chen and Fusco \cite{acfu} for the deterministic problem.
In the stochastic case depending on the scaling,
the motion is driven by the change in the curvature of the boundary
and the stochastic forcing. Moreover, under the assumption of a sufficiently
small noise strength, we establish stochastic stability of a neighborhood
of the
manifold of droplets in $L^2$ and $H^1$,
which means that with overwhelming probability the solution
stays close to the manifold for very long time-scales.
%
\end{abstract}

\maketitle \textbf{Keywords:} {\small{Stochastic Allen-Cahn, mass
conservation, droplet's motion, additive noise, invariant
manifold, stochastic dynamics, stochastic stability, It\^o
calculus.}}
%
%
\pagestyle{myheadings}
\thispagestyle{plain}
%
%
%
\section{Introduction}
\subsection{The problem}
We consider the IBVP for the mass conserving Allen-Cahn equation
posed on a two-dimensional bounded smooth domain
$\Omega$ and introduce an additive spatially smooth and white in time space-time
noise $\dot{V}$ 
\begin{equation}\label{eqphi}
\begin{split}
&\partial_t \phi^{\hat{\eps}}(y,t)=\hat{\eps}^2\Delta_y\phi^{\hat{\eps}}(y,t)-f(\phi^{\hat{\eps}}(y,t))
+\frac{1}{|\Omega|}\int_{\Omega}f(\phi^{\hat{\eps}}(y,t))dy+\dot{V}(y,t),\;\;\;y\in\Omega,\;\;\;t>0,\\
&\partial_n\phi^{\hat{\eps}}(y,t)=0,\;\;\;y\in\partial\Omega,\;\;\;t>0,\\
&\phi^{\hat{\eps}}(y,0)=\phi_0^{\hat{\eps}}(y),\;\;\;y\in\Omega.
\end{split}
\end{equation}
Here, $\hat{\eps}$ is a small positive parameter, $\Omega\subset \mathbb{R}^2$ of area
$|\Omega|$ is a bounded domain with sufficiently smooth boundary $\partial
\Omega$, and $\partial_n$ is the exterior normal derivative to
$\partial \Omega$.  The function $f$ is
the  derivative of a double-well potential, which we denote by
$F$. We assume that $f$ is smooth,
$f(\pm 1)=0< f'(\pm1)$ and $f$ has exactly one other zero
that lies in $(-1,1)$. The standard example is  $f(u)=u^3-u$,
which we will assume for simplicity in the whole presentation,
although the result holds for more general nonlinearities.

The deterministic problem, i.e., when $\dot{V}=0$, was first
studied by Rubinstein and Sternberg, \cite{rsten}, then by
Alikakos, Chen and Fusco, \cite{acfu}, and later by Bates and Jin
in \cite{BJ}. In \cite{acfu}, the authors analyzed the problem's
long-time dynamics and established existence of stable sets of
solutions corresponding to the motion of a small, almost
semicircular interface (droplet) intersecting the boundary of the
domain and moving towards a point of locally maximal curvature.
In \cite{BJ}, the authors established the existence of a global
invariant manifold of droplet states using the approximation
given in \cite{acfu}.

The Allen-Cahn equation, also called Model A in the theory of dynamics
of critical phenomena (cf.\ \cite{hohenberg}), describes the evolution of the
concentration $\phi^{\hat\eps}$ of one species of a two-phase mixture, for example a binary alloy,
occupying a
bounded domain $\Omega$. The small positive parameter
$\hat\eps$ represents the surface tension associated with
interfacial regions that are generated during phase separation, cf.\ \cite{AC}. The
double-well potential $F$ favors layered functions that take
values close to its minima $\pm 1$. The zero level sets of such a
function are called interfaces and the values close to $\pm 1$ are called
states. Usually they are
assigned almost uniformly away from the interface .

Due to mass conservation, a phase separation begins either by
spinodal decomposition, or as in our case, as the mass is very
asymmetric  by nucleation. For the case of Cahn-Hilliard equation
see \cite{Wa1, Wa2}.

If the states are separated, the total perimeter of
interfaces decreases in time, \cite{fife2,fife3,B}. For the
one-dimensional case see also \cite{CGS,M,MM,S,OS}. For the
two-dimensional single layer problem, Chen and Kowalczyk in
\cite{CK} proved that in the limit $\hat\eps\rightarrow 0^+$ this
layer becomes a circular arc interface intersecting the boundary
orthogonally and encloses a point on the boundary where the
curvature has a local maximum. Alikakos, Chen and Fusco in
\cite{acfu} restricted the analysis to a single connected
interface (curve) of shape close to a small semicircular-arc
intersecting the outer boundary $\partial\Omega$. This so called droplet
maintains a semicircular shape for economizing the perimeter
and therefore, in \cite{acfu} its evolution was fully described in
terms of the motion of its center along the outer boundary.

In the absence of the non-local term, the multi-dimensional
stochastic Allen-Cahn equation driven by a multiplicative noise
non-smooth in time and smooth in space was considered in
\cite{rogweb}; the authors therein prove the tightness of
solutions for the sharp interface limit problem. We refer also to
the results in \cite{hiweb}, where a mollified additive white
space-time noise was introduced and the limiting behavior of
solution was investigated for very rough noise. Considering the
one-dimensional Allen-Cahn equation with an additive space-time
white noise, in \cite{weber2}, the author proved exponential
convergence towards a curve of minimizers of the energy.

In this paper, as in \cite{acfu}, we consider a single small
droplet and so, the average concentration $m\in(-1,1)$ is assumed
to satisfy
\[
m=1-\frac{\pi\delta^2}{|\Omega|},
\]
for some $0<\delta\ll1$ while the parameter $\hat{\eps}$ is
sufficiently small such that $0<\hat{\eps}\ll\delta^3$.

When $\dot{V}:=0$, if $z(\hat{\xi}_0)$ is a point of
$\partial\Omega$ where the curvature has a strict extremum, then
there exists a unique equilibrium $\phi(y)$ of \eqref{eqphi} with
zero level set close to the circle of radius $\delta$ centered at
this point. Moreover, for layered initial data whose interface is
close to the semicircle centered at $z(\hat{\xi}_0)$ of radius
$\delta$, the solution of \eqref{eqphi} is layered also, with
interface close to a semicircle of the same radius centered at
some point $z(\hat{\xi})$ of $\partial\Omega$, \cite{acfu}.

The mass conservation constraint
$$\frac{1}{|\Omega|}\int_\Omega\phi^{\hat{\eps}}(y,t)dy=m\;\;\;\mbox{for any}\;\;\;t\geq 0$$
holds if and only if
\begin{equation*}
\begin{split}
0&=\partial_t\Big{(}\int_\Omega\phi^{\hat{\eps}}(y,t)dy\Big{)}
=\int_\Omega\phi_t^{\hat{\eps}}(y,t)dy\\
&=\int_\Omega\hat{\eps}^2\Delta_y\phi^{\hat{\eps}}(y,t)dy
-\int_\Omega f(\phi^{\hat{\eps}}(y,t))dy
+\int_\Omega\frac{1}{|\Omega|}\int_{\Omega}f(\phi^{\hat{\eps}}(y,t))dydy
+\int_\Omega\dot{V}(y,t)dy\\
&=\int_\Omega\dot{V}(y,t)dy.
\end{split}
\end{equation*}
For the  calculation above we integrated \eqref{eqphi} in space
and used the Neumann boundary conditions. Hence, mass conservation
holds for the stochastic problem only if the spatially smooth
additive noise satisfies
\begin{equation}\label{massconstoc}
\int_\Omega\dot{V}(y,t)dy=0\;\;\;\mbox{for
any}\;\;\;t\geq 0.
\end{equation}
This means that in a Fourier series expansion, there is no noise
on the constant mode. Otherwise the average mass would behave
like a Brownian motion, and would not stay close to $1$.

Following \cite{acfu}, in order to fix the size of the droplet, we
introduce in \eqref{eqphi} the following change of variables:
\begin{equation}\label{rescal}
y=\delta
x,\;\;\;\hat{\eps}=\eps\delta,\;\;\;u^{\eps}(x,t)=\phi^{\hat{\eps}}(y,t),\;\;\;
\dot{W}(x,t)=\dot{V}(y,t),\;\;\;
\Omega_\delta=\delta^{-1}\Omega:=\{x\in \mathbb{R}^2:\;\delta
x\in\Omega\},
\end{equation}
and obtain the equivalent problem
\begin{equation}\label{equ}
\begin{split}
&\partial_t u^{\eps}(x,t)=\eps^2\Delta u ^{\eps}(x,t)-f(u^{\eps}(x,t))
+\frac{1}{|\Omega_\delta|}\int_{\Omega_\delta}f(u^{\eps}(x,t))dx
+\dot{W}(x,t),\;\;\;x\in\Omega_\delta,\;\;\;t>0,\\
&\partial_n u^{\eps}(x,t)=0,\;\;\;x\in\partial\Omega_\delta,\;\;\;t>0,\\
&u^{\eps}(x,0)=u_0^{\eps}(x),\;\;\;x\in\Omega_\delta.
\end{split}
\end{equation}
Here $\Delta=\Delta_x$, $\dot{W}(x,t)$ is again an additive
smooth in space, space-time noise defined
below and $\partial_n$
is the normal derivative to $\partial\Omega_\delta$.


\subsection{Assumptions on the noise}
The noise $\dot{W}$ is defined as the formal derivative of a
Wiener process depending on $\eps$, which is given by a Fourier
series with coefficients being independent Brownian motions in
time. Since $\dot{W}$ arises from a rescaling of the noise
$\dot{V}$, we also could take care of the dependence on $\delta$,
but here we suppose that $\delta>0$ is small but fixed (see
Remark \ref{rem:noise}).

Let $W$ be a $\mathcal{Q}$-Wiener process in the underlying
Hilbert space $H:=L^2(\Omega_\delta)$, where $\mathcal{Q}$
is a symmetric operator and $(e_k)_{k\in \mathbb{N}}$ is a
complete $L^2(\Omega_\delta)$-orthonormal basis of eigenfunctions with corresponding
eigenvalues $a_k^2$, so that
\[
 \mathcal{Q} e_k=a_k^2e_k.
\]
Then $W$ is given as the Fourier series
\[
W(t):=\sum_{k=1}^{\infty}a_k\beta_k(t)e_k(\cdot),\leqno{{\rm(N1)}}
\]
for a sequence of independent real-valued Brownian motions
$\{\beta_k(t)\}_{t\geq 0}$, cf.\ DaPrato and Zabzcyck
\cite{DPZ92}. Note that, due to rescaling, $a_k$, $\mathcal{Q}$,
and $e_k$ will depend on $\delta$. We suppress this dependence in
our notation.

The process $W$ is assumed to satisfy
$$
\int_{\Omega_\delta}\dot{W}(x,t)dx=0\;\;\;\mbox{for
any}\;\;\;t\geq 0,\leqno{{\rm(N2)}}
$$
so that the mass conservation condition \eqref{massconstoc} holds
true.

As our approach is  based 
 on application of
It${\rm \hat{o}}$-formula, we will always assume that the trace
of the operator $\mathcal{Q}$ is finite, i.e.,\
\[
 {\rm trace} (\mathcal{Q}):=\sum_{k=1}^\infty a_k^2=\eta_0<\infty.
\]
Furthermore, let $\|\mathcal{Q}\|$ be the induced $L^2$ operator
norm, then the noise strength is defined by
$$\|\mathcal{Q}\|=\eta_1.\leqno{{\rm(N3)}}
$$
Here, observe that
$$\eta_1=\|\mathcal{Q}\|\leq {\rm trace}(\mathcal{Q})=\eta_0.$$
The required smoothness in
space of the noise is given by
$$\eta_2=
\sum_{i=1}^{\infty}a_i^2\|\nabla e_i\|^2=\text{trace}(Q\Delta)<\infty\;.\leqno{{\rm(N4)}}
$$
This assumption will be used in the sequel, when the It\^o-formula
will be applied for the proof of certain $H^1$-norm estimates.

Our results will depend on the size of $(\eta_0, \eta_1, \eta_2)$ in terms of $\eps$.
The usual scenario would be that all $\eta_i$ have a common prefactor in $\eps$, which is the noise-strength,
and are otherwise independent of $\eps$.

\begin{remark}
\label{rem:noise}
Note that due to the rescaling, if we assume that $V$ did not
depend on $\delta$, then it is $\eta$ that depends on $\delta$.
More specifically, since we are in two dimensions then $e_k(x) =
{\delta}^{-1} f_k(\delta x)$, where $f_k$ is an ONB in
$L^2(\Omega)$. Thus $V=\delta W$, and all $a_k$ are of order
$\CO(\delta)$. Hence, $\eta_0$ and $\eta_1$ are of order
$\CO(\delta^2)$, while $\eta_2$ is of order $\CO(\delta^4)$.

Our philosophy in this paper will be to consider $\delta$ very
small but fixed, and analyze the asymptotic problem for
$0<\eps\ll1$. Thus, we suppress the explicit dependence on
$\delta$ in the notation.
\end{remark}


\subsection{The droplet}

We define, for a smooth
function $v$, the operator
$$
\mathcal{L}^{\eps}(v):=\eps^2\Delta
v-f(v)+\frac{1}{|\Omega_\delta|}\int_{\Omega_\delta}f(v)dx\;\;\;\mbox{in}\;\;\;\Omega_\delta,\;\;\;\partial_n
v=0\;\;\;\mbox{on}\;\;\;\partial\Omega_\delta,
$$
and fix the cubic nonlinearity $f$ as in the introduction.

Following Theorem 2.5 of \cite{acfu} (p. 267), we have:
\begin{lemma}
\label{lem:defu}
For any integer $K\in \mathbb{N}$ and for $\delta$, $\eps$
sufficiently small parameters satisfying
%
\begin{equation}\label{upbound}
\eps\leq \frac{1}{2} C_1^*\delta^2,
\end{equation}
with $$C_1^*:=\frac{8\pi
f'(1)}{3\sqrt{6}|\Omega|\int_{-1}^1\sqrt{F(s)}ds},$$ there exist
a droplet like state $u=u(x,\xi,\eps)$ and a scalar (velocity)
field $c=c(\xi,\eps)$ such that
\begin{equation}\label{a4.1}
\begin{split}
&\mathcal{L}^\eps(u)=\eps^2c(\xi,\eps)\partial_\xi u+\mathcal{O}_{L^\infty}(\eps^K)\;\;\;\mbox{in}\;\;\;\Omega_\delta,\\
&\partial_nu=0\;\;\;\mbox{on}\;\;\;\partial\Omega_\delta,\\
&\int_{\Omega_\delta}u=|\Omega_\delta|-\pi,
\end{split}
\end{equation}
where the scalar $\xi\in\left(0,|\partial\Omega_\delta|\right)$
is the arc-length parameter of $\partial\Omega_\delta$.
\end{lemma}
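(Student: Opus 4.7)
My plan is to construct $u(x,\xi,\eps)$ and $c(\xi,\eps)$ via matched asymptotic expansions anchored on the one-dimensional heteroclinic profile $U$ solving $U''=f(U)$, $U(\pm\infty)=\pm 1$, and following the strategy of \cite{acfu}. First I would introduce curvilinear coordinates $(r,s)$ in a tubular neighborhood of a semicircle $\Gamma_\xi$ of radius $1$ centered at the boundary point $z(\xi)\in\partial\Omega_\delta$ and meeting $\partial\Omega_\delta$ orthogonally: $r$ is the signed distance to $\Gamma_\xi$ and $s$ is arclength along $\Gamma_\xi$. In these coordinates the Laplacian decomposes into a transverse part plus curvature corrections, and the Neumann condition on $\partial\Omega_\delta$ translates into matching conditions at the two endpoints of $\Gamma_\xi$.

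The leading-order ansatz is $u_0(x,\xi)=U(r/\eps)$, which by a direct computation satisfies $\CL^\eps(u_0)=-\eps\,\kappa(\xi,s)\,U'(r/\eps)+\CO(\eps^2)$, where $\kappa(\xi,s)$ is the mean curvature of the level sets of $r$ near $\Gamma_\xi$; the leading piece is the constant curvature $1$ of $\Gamma_\xi$ itself, corrected by the curvature of $\partial\Omega_\delta$ near $z(\xi)$. I would then seek
\[
 u=u_0+\eps u_1+\eps^2 u_2+\cdots+\eps^K u_K+\lambda(\xi,\eps),\qquad c(\xi,\eps)=c_0(\xi)+\eps c_1(\xi)+\cdots+\eps^{K-1}c_{K-1}(\xi),
\]
where $\lambda$ is a Lagrange constant enforcing the mass condition. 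Substituting into $\CL^\eps(u)=\eps^2 c\,\partial_\xi u$ and collecting like powers of $\eps$ gives a triangular cascade of linear problems of the form $\mathcal{M}u_j=g_j(r,s,\xi;u_0,\dots,u_{j-1},c_0,\dots,c_{j-1})$, with $\mathcal{M}:=\partial_r^2-f'(U(r/\eps))$ the linearization along the transverse direction.

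The kernel of $\mathcal{M}$ on $\R$ is spanned by $U'(r/\eps)$, so the Fredholm alternative forces the compatibility condition $\int_\R g_j(r,s,\xi)\,U'(r/\eps)\,dr=0$ at every order and every $s$; at the first nontrivial order this identifies $c_0(\xi)$ as an explicit weighted integral of the curvature of $\partial\Omega_\delta$ along $\Gamma_\xi$, and at subsequent orders it successively fixes $c_j$. With compatibility in hand, each $u_j$ is recovered by inverting $\mathcal{M}$ against $U'^{\perp}$, giving exponentially decaying corrections away from $\Gamma_\xi$. The non-local term $\frac{1}{|\Omega_\delta|}\int_{\Omega_\delta}f(u)\,dx$ in $\CL^\eps$ is a quantity of order $\CO(|\Omega_\delta|^{-1})$ which by \eqref{upbound} is subleading and enters only through the forcing $g_j$ of higher orders; the constant $\lambda(\xi,\eps)$ is then adjusted at each order to maintain $\int_{\Omega_\delta}u=|\Omega_\delta|-\pi$.

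The main obstacle is the matching near the two endpoints of $\Gamma_\xi\cap\partial\Omega_\delta$: because the ansatz is built on a straightened transverse coordinate, the natural Neumann boundary condition is only approximately satisfied there, and one must modify $u_j$ by boundary layer correctors (reflections across $\partial\Omega_\delta$, combined with smooth cut-offs) without spoiling either the Fredholm compatibility conditions or the mass constraint. The smallness condition $\eps\le\tfrac12 C_1^*\delta^2$ in \eqref{upbound} is precisely what guarantees that the non-local term, the endpoint corrections, and the curvature contributions can all be bounded uniformly in the iteration, so that the construction closes and the final residual is of order $\CO_{L^\infty}(\eps^K)$. Reading off $c(\xi,\eps)$ from the assembled compatibility conditions and collecting $u=\sum_{j=0}^K\eps^j u_j+\lambda$ yields the claimed droplet state.
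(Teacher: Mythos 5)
Your proposal follows the same matched-asymptotics construction as the paper (which simply cites Theorem 2.5 of \cite{acfu} and gives a one-paragraph sketch): build the droplet from the transverse heteroclinic profile $U(r/\eps)$ in $(r,s)$-coordinates near the semicircle, iterate in powers of $\eps$ using Fredholm compatibility to fix the velocity field $c$, add corner-layer correctors near $\Gamma\cap\partial\Omega_\delta$, and tune a Lagrange constant to enforce the mass constraint. The only detail you omit is the normalization $\int_\mathbb{R}R\dot{U}^2(R)\,dR=0$ that fixes the translation ambiguity of $U$, but otherwise your outline matches the paper's intended argument.
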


Here, $\mathcal{O}_{L^\infty}(\eps^K)$ denotes a term that is
uniformly bounded by $C\eps^K$ for some constant $C>0$.

\begin{figure}[ht]
 \begin{center}
    \begin{tikzpicture}[scale=2,>=latex]
    \draw [black,line width=1.3pt] plot [smooth] coordinates {(-1,1) (1,-.2) (2.5,-.5) (4,-.2) (7,1)};
     \draw [red,line width=3pt] plot [smooth] coordinates { (2,-.47)  (2.05,-0.24) (2.2,-0.1) (2.5,-0.03) (2.8,-.1) (2.95,-0.24)  (3,-.47)};
      \draw [red] (2.7,0.1) node {\LARGE  $\Gamma$};
     \draw [black] (6,0.35) node {\LARGE  $\partial\Omega_\delta$};%
     \draw [black] (1.5,0.5) node {\LARGE  $\Omega_\delta$};
        \filldraw[black] (2.5,-.5) circle(1pt);
         \draw [black] (2.6,-.7) node {\Large  $\xi$};
         \draw [black] (2.6,-.2) node {$1$};
          \draw[black] (2.5,-.5)--(2.8,-.1);
    \end{tikzpicture}
    \end{center}

\caption{Droplet state in the rescaled domain $\Omega_\delta$ with semicircular arc $\Gamma$}
\end{figure}
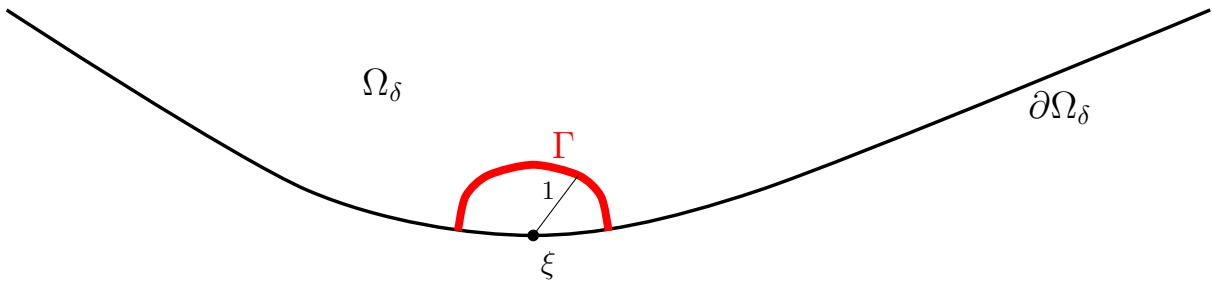

\begin{proof}[Idea of Proof]
The droplet is constructed, using asymptotic expansions in $\eps$,
as a function of $(r,s)$, with $r$ the signed distance from the
interface $\Gamma$ and $s$ the arc-length along $\Gamma$, this
being an approximately semicircular curve intersecting
$\partial\Omega_\delta$ orthogonally. Additional asymptotic
expansions are used near the corners where the interface meets
$\partial \Omega_\delta$ but these are of higher order. The first
order approximation to this state is $U(r/\eps)$, transverse to
$\Gamma$ at each point, where $U$ is the solution to
\begin{equation}\label{heterocl}
\ddot{U}-f(U)=0,\;\;\;U(\pm\infty)=\pm
1,\quad \text{such that }\int_\mathbb{R}R\dot{U}^2(R)dR=0.
\end{equation}
\end{proof}

 \begin{figure}[h]
 \begin{center}
    \begin{tikzpicture}[scale=2,>=latex]
        \draw[black,arrows=->,line width=1.3pt] (0,0)--(3.5,0);
        \draw[black,arrows=->,line width=1.3pt] (0,-.3)--(0,1.3);
         \draw [black] (0,-.5) node {\Large $\partial\Omega_\delta$};
         \draw [black] (3.8,0) node {\Large $\Omega_\delta$};
          \draw [black] (-.2,1) node {$1$};
         \draw [red,line width=1.3pt] plot [smooth] coordinates {(0,1) (0.5,0.99) (.8,0.9) (.9,0.6) (1,0) (1.1,-0.6) (1.2,-0.75) (1.5,-0.88)(2,-0.95) (3,-0.99) (4,-1)};
        \draw [red] (1.25,.5) node {$U(r/\eps)$};
         \draw[black, <->] (0.8,-.1)--(1.2,-.1);
         \draw [black] (0.8,-.3) node {$\eps$};

    \end{tikzpicture}
    \end{center}
  \caption{Sketch of a section through the Droplet showing the local shape given by $U$.}
 \end{figure}
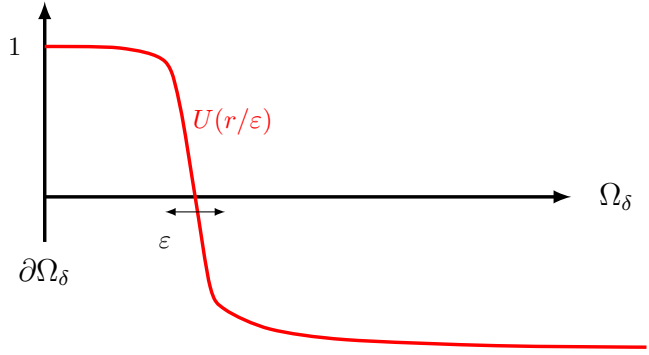

In previous lemma, and for the rest of this paper $\partial_\xi
u:=\frac{\partial}{\partial\xi} (u(\cdot))$, while
$\partial_\xi^l(u):=\frac{\partial^l}{\partial\xi^l}(u(\cdot))$
for any integer $l>1$. Note that $\partial_\xi u$ is just the
usual partial derivative when $u$ is considered as a function of
$x$, parameterized by $\xi$ and $\eps$, but when $u$ is
considered as a function of local coordinates $(r,s)$, then one
must take into account the fact that $r$ and $s$ both vary with
$\xi$.  This observation will be used in the final section where
these derivatives are estimated.

According to \cite{acfu} (p. 261 and relation (2.54) on p. 262) it
follows that:

\begin{lemma}
\label{lem:shapec} For $c$ from the definition of the droplet in
Lemma \ref{lem:defu}, we have
\begin{equation}\label{cest}
c=c(\xi,\eps)=\mathcal{O}(\delta^2).
\end{equation}
 Moreover,
 \[
c(\xi,\eps)=
-\frac{4}{3\pi}g_0\mathcal{K}_{\Omega_\delta}^{\prime}(\xi)
\delta\Big{[}1+\mathcal{O}(\delta)\Big{]}+\mathcal{O}(\delta^4)
\]
where $\mathcal{K}_{\Omega_\delta}^{\prime}(\xi)$ is the
derivative of the curvature of $\partial\Omega_\delta$, and
$g_0$ is a constant equal to $1$ if $\eps=\mathcal{O}(\delta^3)$.
\end{lemma}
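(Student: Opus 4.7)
The plan is to follow the asymptotic construction of \cite{acfu} and read the statement off the solvability conditions that appear along the inner expansion used to define the droplet in Lemma \ref{lem:defu}. Concretely, I would introduce Fermi coordinates $(r,s)$ in a tubular neighborhood of the approximately semicircular interface $\Gamma$ (with $r$ the signed distance to $\Gamma$ and $s$ its arc-length), write $z=r/\eps$, and insert the ansatz
\[
u=U(z)+\eps u^{(1)}(z,s)+\eps^2u^{(2)}(z,s)+\cdots
\]
into $\mathcal{L}^\eps(u)=\eps^2c\,\partial_\xi u+\mathcal{O}_{L^\infty}(\eps^K)$. Collecting powers of $\eps$ gives a hierarchy of one-dimensional equations in $z$ for the $u^{(j)}$, all sharing the linearization $\mathcal{M}:=\partial_z^2-f'(U)$, whose zeroth-order equation reduces to \eqref{heterocl}.

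Since $\mathcal{M}$ has a one-dimensional kernel spanned by $\dot U$, each equation in the hierarchy is solvable iff its right-hand side is $L^2(\mathbb{R})$-orthogonal to $\dot U$. The first solvability condition selects the semicircular profile of $\Gamma$ of radius one in the rescaled domain, and the velocity $c$ is determined at the order at which the translation term $\eps^2c\,\partial_\xi u$ enters the solvability condition: projecting the corresponding inhomogeneity onto $\dot U$ yields
\[
c\,\|\dot U\|_{L^2(\mathbb{R})}^2=\int_\mathbb{R}\mathcal{R}(z,s;\xi,\eps)\,\dot U(z)\,dz,
\]
with $\mathcal{R}$ collecting the bulk curvature, the nonlocal mass-multiplier, and the corner-layer contributions. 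Using $\|\dot U\|_{L^2}^2=\int_{-1}^1\sqrt{2F(s)}\,ds$ together with explicit integration of the dominant term, which is produced by the variation along $\xi$ of the curvature of $\partial\Omega_\delta$, delivers the stated formula for $c$. The upper bound $c=\mathcal{O}(\delta^2)$ is then a direct consequence of the scaling $\mathcal{K}'_{\Omega_\delta}=\mathcal{O}(\delta^2)$ inherited from $\partial\Omega_\delta=\delta^{-1}\partial\Omega$ (since $\kappa_{\Omega_\delta}(\tilde\xi)=\delta\kappa_\Omega(\delta\tilde\xi)$ differentiated in the arc-length of $\partial\Omega_\delta$ carries an extra factor of $\delta$).

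The main obstacle is the corner analysis where $\Gamma$ meets $\partial\Omega_\delta$ orthogonally. There the inner expansion must be matched with additional corner layers (alluded to in the sketch of Lemma \ref{lem:defu}) which contribute at exactly the same order as the bulk terms, and it is this delicate matching in \cite{acfu} that fixes the numerical prefactor $-4/(3\pi)$ as well as the constant $g_0$ (which reduces to $1$ when $\eps=\mathcal{O}(\delta^3)$, so that subleading nonlocal corrections become negligible). At this stage my proof would defer to the detailed construction of \cite{acfu}, specifically Theorem 2.5 (p.\ 267) and relation (2.54) (p.\ 262), since the statement of the lemma is precisely the conclusion of that construction.
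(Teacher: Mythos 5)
The paper itself offers no proof of this lemma beyond the citation to \cite{acfu} (p.\ 261 and relation (2.54) on p.\ 262), and your outline of the inner expansion, the Fredholm solvability conditions against $\dot U$, and the corner-layer matching is a faithful reconstruction of the argument carried out there, deferring, exactly as the paper does, to \cite{acfu} for the prefactor $-4/(3\pi)$ and for $g_0$. One small point: if you really derive the bound on $c$ from $\mathcal{K}'_{\Omega_\delta}=\mathcal{O}(\delta^2)$ plugged into the displayed formula, the extra explicit factor of $\delta$ gives the stronger estimate $c=\mathcal{O}(\delta^3)$, so the lemma's cruder $\mathcal{O}(\delta^2)$ bound is a separate (and weaker) statement inherited from \cite{acfu} rather than something read off the explicit formula.
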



\subsection{The manifold}

We define a manifold of droplet states, which approximates
solutions well and captures the motion of droplets along the
manifold.

\begin{definition}
We consider the manifold
$$\mathcal{M}:=\Big{\{}u(\cdot,\xi,\eps):\;\xi\in [0,|\partial\Omega_\delta|]\Big\},$$ consisting of the smooth
functions $u$, mentioned above, having a droplet-like structure and satisfying
\eqref{a4.1}.
\end{definition}

Obviously, $\mathcal{M}$ is a closed manifold without boundary
which in homotopy equivalent to the boundary
$\partial\Omega_\delta$. Furthermore, if $\Omega_\delta$ is
simply connected, then $\mathcal{M}$ is topologically a circle.

We define some tubular neighborhoods of the manifold
$\mathcal{M}$ in which we shall work. For $r>0$, let
\begin{equation}\label{nn}
\begin{split}
&\mathcal{N}_{L^2}^r:=\Big{\{} v \in
L^2(\Omega_\delta):\;d_{L^2}(v,\mathcal{M}) < r \Big{\}},\\
&\mathcal{N}_{H_{\eps}^1}^r:=\Big{\{} v \in
H^1(\Omega_\delta):\;d_{H_\eps^1}(v,\mathcal{M}) < r \Big{\}},
\end{split}
\end{equation}
where $d_{L^2}$ is the distance in the $L^2(\Omega_\delta)$-norm
and $d_{H_\eps^1}$ the distance in the norm
$\|\cdot\|_{H_\eps^1}:H^1(\Omega_\delta)\rightarrow \mathbb{R}^+$
defined by
$$\|v\|_{H_\eps^1}:=\Big{(}\eps^2\|\nabla v \|_{L^2(\Omega_\delta)}^2+
\|v\|_{L^2(\Omega_\delta)}^2\Big{)}^{\frac{1}{2}}.$$ The usual
Sobolev space $H^1(\Omega_\delta)$ equipped with the norm
$\|\cdot\|_{H_\eps^1}$ will be denoted by
$H_\eps^1:=\Big{\{}H^1(\Omega_\delta),\;\|\cdot\|_{H_\eps^1}\Big{\}}$.
For the rest of this paper $(\cdot,\cdot)$ will denote
the $L^2(\Omega_\delta)$-inner product and $\|\cdot\|$ the
induced $L^2(\Omega_\delta)$-norm.

Let us finally remark, that for $r$ sufficiently small (depending
on $\delta$) there is a well defined local coordinate system in
both $\mathcal{N}_{L^2}^r$ and $\mathcal{N}_{H_{\eps}^1}^r$ and
the projection onto $\CM$ is well defined and smooth.


\subsection{Main Results}

In Section 2, we analyze the dynamics of solutions $w:=u^\eps$ of
\eqref{equ} approximated by some $u$ in $\mathcal{M}$ and written
as
\begin{equation*}
w(t)=u(\cdot,\xi(t),\eps)+v(t).
\end{equation*}
Here, $v$ is orthogonal on the manifold, i.e.,\
$v(t)\bot_{L^2(\Omega_\delta)}
\partial_\xi(u(\cdot,\xi(t),\eps))$.

We suppose that $\|v\|$ is small and
$\xi$ is a diffusion process given by
\begin{equation*}
d\xi=b(\xi)dt+(\sigma(\xi),dW),
\end{equation*}
for some scalar field $b:\mathbb{R}\rightarrow \mathbb{R}$ and
some variance $\sigma:\mathbb{R}\rightarrow H$.
In Remark \ref{rem:just} we shall comment on the fact that it is
not restrictive to assume $\xi$ being a diffusion.

Applying It\^o
calculus, we compute first $b$ and $\sigma$ exactly and then
estimate their size in terms of $\eps$, in order to determine the
major contribution. See Corollary \ref{cor:sde} and Remarks
\ref{rem:strato} and \ref{rem:curv}.

Under the assumption of a sufficiently smooth initial condition,
by Theorem \ref{stdyn} we prove
that locally in time
\begin{equation*}
\begin{split}
d\xi&\approx \eps^2c(\xi(t),\eps)dt
+d\mathcal{A}_s,
\end{split}
\end{equation*}
where the stochastic process $\mathcal{A}_t$ is given as a
diffusion process by the formula
\begin{equation}\label{def:dAt}
d\mathcal{A}_s=A^{-1}\Big{[}\frac{1}{2}(v,\partial_\xi^3
(u))-\frac{3}{2}(\partial_\xi^2 u,\partial_\xi u)\Big{]}
(\mathcal{Q}\sigma,\sigma)dt+A^{-1}(\sigma,\mathcal{Q}
\partial_\xi^2 u)dt+A^{-1}(\partial_\xi u,dW),
\end{equation}
for $A:=\|\partial_\xi u\|^2-(v,\partial_\xi^2 u)$.

Theorem \ref{neffect} and its corollary estimates the effect of noise on the local
in time stochastic dynamics driven by the additive term
$d\mathcal{A}_s$ which supplies the deterministic dynamics of
\cite{acfu} with an extra deterministic drift and a noise term.
We will see (cf.\ Remark \ref{rem:strato})
that this is for small $v$ the Wiener process $W$ projected
to the manifold given by a Stratonovic differential
\[
d\mathcal{A}_s = A^{-1}(\partial_\xi u,\circ dW)\;.
\]
Since the deterministic dynamics are of  order
$\mathcal{O}(\eps^2)$ then the noise is not always dominant.
\textit{Only, if the noise strength is sufficiently large, or the
curvature of the boundary is constant, then noise can dominate
(cf.~Remark \ref{rem:curv})}. Also if the droplet sits in a spot with maximal curvature,
then the noise dominates, at least locally around that point.

This is in contrast to the one-dimensional
stochastic Cahn-Hilliard equation (see \cite{abk}) where it has
been proved that a noise of polynomial strength in $\eps$ cannot
be ignored since the deterministic dynamics are exponentially
small.


In Section 3, we give sufficient criteria for the noise strength
such that the solution stays with high probability close to the
droplet states both in $L^2$ and $H^1_\eps$ norms, until times of
any polynomial order $\mathcal{O}(\eps^{-q})$. The previous is
achieved by estimating the $p$-moments of $v$ in various norms.

In our $L^2$-stability result of Theorem \ref{newl2thm} under the
assumption of  $\eta_0=\mathcal{O}(\eps^{2k-2-\tilde{k}})$ for
some small $\tilde{k}>0$, we prove that if $w(0)$ lies in
$\mathcal {N}_{L^2}^{\eta\eps^{k-2}}$ for $k>5$ and some $\eta>0$
independent of $\eps$, then with high probability the solution
stays in a slightly larger neighborhood $\mathcal
{N}_{L^2}^{C\eps^{k-2}}$ ($C>\eta$) for any long time of order
$\mathcal{O}(\eps^{-q})$, $q\in\mathbb{N}$.

Our problem is stochastic, so stability in $H^1_\eps$-norm is
investigated analytically since we can not refer to the abstract
parabolic regularity argument used in \cite{acfu} in the absence
of noise. More specifically, in Theorem \ref{newh1thm} and its
corollary, imposing the additional assumption of
$\eta_2=\mathcal{O}(\eps^{2k-6})$, we show that if $w(0)$ lies in
$\mathcal {N}_{L^2}^{\eta\eps^{k-2}}$ and $\nabla w(0)$ lies in
$\mathcal {N}_{L^2}^{\eta\eps^{k-4}}$ then with high probability
the solution  stays in $\mathcal
{N}_{H^1_\eps}^{C\eps^{k-3-\tilde{k}}}$ for any long time of order
$\mathcal{O}(\eps^{-q})$, $q\in\mathbb{N}$, where  $\tilde{k}$ is
just some arbitrary small number. \textit{Due to this result, the
local in time stochastic dynamics derived in Section 2, are
proven to be valid for very long time scales}. Nevertheless, we
can not claim that the radius of stability is the optimal one.

The last independent Section \ref{sec44} involves some higher
order estimates needed for the stochastic dynamics. We compute
these estimates by extending the analogous lower order results of
\cite{acfu} which were used for the deterministic problem.

Throughout this manuscript, as many of our proofs are quite
technical, we present the application of It\^o calculus in full
details; we hope that the interested reader may gain a wider
comprehension of this stochastic technique. Finally, let us remark,
that we denote various constant all by $C$,
although their value may change from line to line.


\section{Stochastic dynamics}\label{sec2}


\subsection{The exact stochastic equation of droplet's motion}

In this section, we shall derive the stochastic motion on the
manifold $\mathcal{M}$ following the main lines of Theorem 4.3
\cite{acfu} (p. 297), presented for the deterministic problem. For
simplicity we use the symbol $w$ in place of $u^{\eps}$, so that
problem \eqref{equ} can be written as the
following stochastic PDE
\begin{equation}\label{oper}
dw =\mathcal{L}^{\eps}(w) dt+ d{W}.
\end{equation}

Let the position on $\partial\Omega$ be a diffusion process  $\xi$ given by
\begin{equation}\label{xi}
d\xi=b(\xi)dt+(\sigma(\xi),dW),
\end{equation}
for some scalar field $b:\mathbb{R}\rightarrow \mathbb{R}$ and
some variance $\sigma:\mathbb{R}\rightarrow H$ still to be
determined. We will justify this ansatz later in Remark
\ref{rem:just}, once we obtain equations for $b$ and $\sigma$.

We approximate the solution $w$ of \eqref{equ} by some
$u=u(\xi(t))$ in $\mathcal{M}$, and write
\begin{equation}\label{1}
w(t)=u(\cdot,\xi(t),\eps)+v(t),
\end{equation}
where
$v\bot_{L^2(\Omega_\delta)}
\partial_\xi u$. We use equation
\eqref{equ} written as \eqref{oper} to get
\begin{equation}\label{2}
d w= \Big[\eps^2\Delta w-f(w)
+\frac{1}{|\Omega_\delta|}\int_{\Omega_\delta}f(w)dx\Big]dt+dW
=\left[\mathcal{L}^\eps(u)-Lv-N(u,v)\right] dt+dW,
\end{equation}
where $L$ is defined, for $v$ smooth, by
$$Lv=-\eps^2\Delta v+f'(u)v,$$
so that $-L$ is the linearization of $\mathcal{L}^\eps$ at $u$,
and $N(u,v)$ is the remaining nonlinear part. Differentiating $w=u+v$ with respect to
$t$, we obtain by It${\rm \hat{o}}$ calculus
\begin{equation}\label{3}
dw=\partial_\xi u\;d\xi+dv+\frac{1}{2}\partial_\xi^2u\;d\xi d\xi.
\end{equation}
From It\^o-calculus  $dtdt=dW dt=0$ and higher order
differentials all vanish.

Moreover, for the quantity $d\xi d \xi$ the following lemma holds
true.
\begin{lemma}
\label{lem:help1} We have that
\begin{equation}\label{help1}
d\xi d\xi =(\mathcal{Q}\sigma(\xi),\sigma(\xi))dt.
\end{equation}
\end{lemma}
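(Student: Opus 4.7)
The plan is to compute the quadratic variation of $\xi$ directly from its defining SDE \eqref{xi}, using the Itô multiplication rules and the Fourier representation (N1) of the $\mathcal{Q}$-Wiener process $W$.

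First I would recall that, by the standard Itô calculus rules, $dt\,dt=0$ and $dt\,dW=0$, so the drift term $b(\xi)dt$ in \eqref{xi} makes no contribution to $d\xi\,d\xi$ and the cross terms drop out. Hence
\[
d\xi\,d\xi=(\sigma(\xi),dW)\cdot(\sigma(\xi),dW).
\]
Next I would substitute the Fourier expansion $W(t)=\sum_{k=1}^\infty a_k\beta_k(t)e_k$ from (N1) to write
\[
(\sigma(\xi),dW)=\sum_{k=1}^\infty a_k(\sigma(\xi),e_k)\,d\beta_k,
\]
where the series converges in $L^2(\Omega)$ because $\sum_k a_k^2=\eta_0<\infty$ and $\sigma(\xi)\in H$.

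Then using the orthogonality of the independent Brownian motions, i.e.\ $d\beta_j\,d\beta_k=\delta_{jk}\,dt$, I would obtain
\[
d\xi\,d\xi=\sum_{j,k}a_ja_k(\sigma(\xi),e_j)(\sigma(\xi),e_k)\,d\beta_j\,d\beta_k
=\sum_{k=1}^\infty a_k^2(\sigma(\xi),e_k)^2\,dt.
\]
Finally, expanding $\sigma(\xi)=\sum_k(\sigma(\xi),e_k)e_k$ in the orthonormal basis and using $\mathcal{Q}e_k=a_k^2e_k$, the right-hand side equals $(\mathcal{Q}\sigma(\xi),\sigma(\xi))\,dt$, which gives \eqref{help1}.

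The only delicate point is justifying the interchange of summation with the (formal) Itô product; this is standard since $\mathrm{trace}(\mathcal{Q})<\infty$ and $\sigma(\xi)\in H$ guarantee that the series defining $(\sigma(\xi),dW)$ is a genuine $L^2$-martingale whose quadratic variation is given by the $H$-valued Itô isometry. Everything else is bookkeeping with orthonormal expansions, so I do not expect any serious obstacle beyond keeping track of this convergence.
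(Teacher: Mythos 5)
Your proposal is correct and follows essentially the same route as the paper: reduce $d\xi\,d\xi$ to $(\sigma,dW)(\sigma,dW)$ via the Itô rules, expand $W$ in the eigenbasis of $\mathcal{Q}$, and use $d\beta_j\,d\beta_k=\delta_{jk}\,dt$ together with Parseval to identify the result with $(\mathcal{Q}\sigma,\sigma)\,dt$. The only cosmetic difference is that the paper states the intermediate identity in the slightly more general form $(a,dW)(b,dW)=(\mathcal{Q}a,b)\,dt$ before specializing to $a=b=\sigma$, whereas you compute the diagonal case directly.
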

\begin{proof}
 Note that using It\^o-calculus, by the definition of $\xi$ we derive
$$
d\xi d\xi=(\sigma(\xi),dW)(\sigma(\xi),dW).
$$
Thus, the claim  follows immediately from the definition of the
covariance operator.

In more detail, we use the series expansion of $W$ together with
$d\beta_i d\beta_j=\delta_{ij}dt$, and derive from Parcevals identity
for arbitrary
functions $a$ and $b$ the relation
\begin{equation}\label{hhh}
(a,dW)(b,dW)
=\sum_{i,j}a_ia_jd\beta_i
d\beta_j(a,e_i)(b,e_j)=\sum_ia_i^2(a,e_i)(b,e_i)dt=(\mathcal{Q}
a,b)dt\;.
\end{equation}
\end{proof}

Therefore, substituting \eqref{3} in \eqref{2} we obtain by using
Lemma \ref{lem:help1} above,
\begin{equation}\label{4**}
\begin{split}
dw=\partial_\xi u\;d\xi
+\frac{1}{2}\partial_\xi^2 u\cdot(\mathcal{Q}\sigma,\sigma)dt
+dv
=\Big{[}\mathcal{L}^\eps(u)-Lv-N(u,v)\Big{]}dt+dW.
\end{split}
\end{equation}
We take the $L^2(\Omega_\delta)$-inner product of \eqref{4**}
with $\partial_\xi u$ and arrive at
\begin{equation}\label{5}
\begin{split}
\|\partial_\xi u\|^2d\xi+(dv,\partial_\xi u)=&\Big{[}(\mathcal{L}^\eps(u),\partial_\xi u)-(Lv,\partial_\xi u)-(N(u,v),\partial_\xi u)\Big{]}dt\\
&+(dW,\partial_\xi u)-\frac{1}{2}(\partial_\xi^2 u,\partial_\xi u) \cdot (\mathcal{Q}\sigma,\sigma )dt.
\end{split}
\end{equation}
We differentiate in $t$ the orthogonality condition
$(v,\partial_\xi u)=0$ and obtain by applying again It\^{o}
calculus
\begin{equation}\label{6**}
(dv,\partial_\xi u)+(v,d\partial_\xi u)
+(dv,d\partial_\xi u)=0.
\end{equation}
As it is demonstrated by the following lemma, by making use of the
relation above we shall eliminate $(dv,\partial_\xi u)$ in
\eqref{5}.
\begin{lemma}\label{lem1}It holds that
\begin{equation}\label{7}
\begin{split}
(dv, d[ \partial_\xi u])=&-(v,\partial_\xi^2 u)d\xi-(\sigma,\mathcal{Q} \partial_\xi^2 u)dt\\
&-\frac{1}{2}(v,\partial_\xi^3 u)\cdot (\mathcal{Q}\sigma,\sigma)dt
+(\partial_\xi u,\partial_\xi^2 u)\cdot (\mathcal{Q}\sigma ,\sigma)dt.
\end{split}
\end{equation}
\end{lemma}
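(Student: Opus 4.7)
The approach is to apply It\^o's formula to $\partial_\xi u(\cdot, \xi(t), \eps)$, viewed as a function of the scalar diffusion $\xi$ alone (with $\eps$ fixed and no explicit time dependence). Using Lemma \ref{lem:help1} for $d\xi\, d\xi$, this gives
\begin{equation*}
d(\partial_\xi u) = \partial_\xi^2 u\, d\xi + \tfrac{1}{2}\partial_\xi^3 u\, (\mathcal{Q}\sigma, \sigma)\, dt,
\end{equation*}
and hence, by direct insertion,
\begin{equation*}
(v, d\partial_\xi u) = (v, \partial_\xi^2 u)\, d\xi + \tfrac{1}{2}(v, \partial_\xi^3 u)(\mathcal{Q}\sigma, \sigma)\, dt.
\end{equation*}

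The main computational step is the It\^o cross-variation $(dv, d\partial_\xi u)$, for which only the martingale parts of the two differentials contribute. Combining \eqref{3}, \eqref{2}, and the ansatz \eqref{xi}, the martingale part of $dv$ is $dW - \partial_\xi u\,(\sigma, dW)$, while that of $d\partial_\xi u$ is $\partial_\xi^2 u\,(\sigma, dW)$. Taking the $L^2$-inner product of these two martingale pieces and applying the Parseval-type identity \eqref{hhh}, namely $(a, dW)(b, dW) = (\mathcal{Q} a, b)\, dt$, term by term, I obtain
\begin{equation*}
(dv, d\partial_\xi u) = (\sigma, \mathcal{Q}\partial_\xi^2 u)\, dt - (\partial_\xi u, \partial_\xi^2 u)(\mathcal{Q}\sigma, \sigma)\, dt.
\end{equation*}

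Finally, summing the previous two displays and negating, together with the orthogonality-derived relation \eqref{6**}, i.e.\ $(dv, \partial_\xi u) = -(v, d\partial_\xi u) - (dv, d\partial_\xi u)$, produces exactly the four terms on the right-hand side of the stated identity. The one delicate point is the bookkeeping for $(dv, d\partial_\xi u)$: one must discard all contributions of the form $dt \cdot dW$ and $dt \cdot dt$, and recognize that a scalar factor such as $(\sigma, dW)$ multiplying the $H$-valued $dW$ produces under the It\^o bracket the vector $\mathcal{Q}\sigma$ rather than any stochastic integral. Once the two martingale decompositions are correctly identified, the remainder of the argument is purely algebraic.
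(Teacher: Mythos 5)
Your proof is correct and takes essentially the same route as the paper: expand $d(\partial_\xi u)$ by It\^o's formula using Lemma~\ref{lem:help1}, invoke the differentiated orthogonality relation \eqref{6**} to express $(\partial_\xi u, dv)$, and compute the quadratic covariation $(dv, d\partial_\xi u)$ via the covariance identity \eqref{hhh}. Your direct identification of the martingale parts of $dv$ and $d\partial_\xi u$ is a slightly more streamlined bookkeeping of what the paper does by writing $(dv,d\partial_\xi u)=(\partial_\xi^2 u,dv)\,d\xi$ and then substituting $dv=dw-du$; the resulting terms agree one for one.
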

\begin{proof}
It\^{o} calculus (recall $d\xi dt =0$ and $dtdt=0$) and (\ref{help1}) yields
$$
d[\partial_\xi u]
=\partial_\xi^2 ud\xi
+\frac{1}{2}\partial_\xi^3 u\cdot (\mathcal{Q}\sigma,\sigma)dt\;.
$$
Then, using this in \eqref{6**}, we obtain after some computations
\begin{equation}\label{6*}
\begin{split}
(\partial_\xi u,dv)&= - (v,d[\partial_\xi u])-(dv,d\partial_\xi u)\\
&=-(v,\partial_\xi^2 u)d\xi
-\frac{1}{2}(v,\partial_\xi^3 u)\cdot(\mathcal{Q}\sigma,\sigma)dt
-(\partial_\xi^2 u,dv)d\xi.
\end{split}
\end{equation}
Observing that $w=u+v$ and hence, $dv=dw-du$, by \eqref{4**}, we
arrive at
\begin{equation}
\begin{split}
-(\partial_\xi^2 u,dv)d\xi=&-(\partial_\xi^2 u,dw)d\xi+(\partial_\xi^2 u,du)d\xi\\
=&-(\partial_\xi^2 u,dW)d\xi+(\partial_\xi^2 u,du)d\xi\\
=&-(\partial_\xi^2 u,dW)d\xi
+(\partial_\xi u ,\partial_\xi^2 u) \cdot(\mathcal{Q}\sigma,\sigma)dt .
\end{split}
\end{equation}
Using our definition of $\xi$ as a diffusion process from
\eqref{xi}, i.e.,\ $d\xi=b(\xi)dt+(\sigma(\xi),dW),$ and relation
\eqref{hhh} we obtain
$$(\partial_\xi^2 u,dW)d\xi
=(\partial_\xi^2 u,dW) (\sigma, dW)
=(\sigma,\mathcal{Q} \partial_\xi^2 u)dt\;.
$$
This yields
\begin{equation}
\label{help4}
-(\partial_\xi^2 u,dv)d\xi=
-(\sigma,\mathcal{Q} \partial_\xi^2 u)dt
+(\partial_\xi u ,\partial_\xi^2u) (\sigma,\mathcal{Q} \sigma )dt\;.
\end{equation}
So, by replacing \eqref{help4} in \eqref{6*} the result follows.
\end{proof}
Now we proceed to derive the equations of motion along the manifold.
Using \eqref{7} in \eqref{5} we arrive at
\begin{equation}
\label{motion*}
\begin{split}
\Big{[}\|\partial_\xi u\|^2-(v,\partial_\xi^2 u)\Big{]}d\xi=&\Big{[}(\mathcal{L}^\eps(u),\partial_\xi u)-(Lv,\partial_\xi u)-(N(u,v),\partial_\xi u)\Big{]}dt\\
&+\Big{[}\frac{1}{2}(v,\partial_\xi^3 u)-\frac{3}{2}(\partial_\xi^2 u,\partial_\xi u)\Big{]}(\mathcal{Q}\sigma,\sigma)dt
+(\sigma,\mathcal{Q}\partial_\xi^2 u)dt
+(\partial_\xi u,dW).
\end{split}
\end{equation}
Obviously, if $A:=\|\partial_\xi u\|^2-(v,\partial_\xi^2 u)\neq
0$, then \eqref{motion*} finally yields:

\subsubsection*{The  stochastic o.d.e. for the droplet's dynamics}

\begin{equation}\label{motion**}
\begin{split}
d\xi=A^{-1}\Big{[}(\mathcal{L}^\eps(u),\partial_\xi u)&-(Lv,\partial_\xi u)-(N(u,v),\partial_\xi u)\Big{]}dt\\
&+A^{-1}\Big{[}\frac{1}{2}(v,\partial_\xi^3u)
-\frac{3}{2}(\partial_\xi^2 u,\partial_\xi u)\Big{]}
(\mathcal{Q}\sigma,\sigma)dt
+A^{-1}(\sigma,\mathcal{Q}\partial_\xi^2 u)dt\\
&+A^{-1}(\partial_\xi u,dW).
\end{split}
\end{equation}
Note that provided $v$ is sufficiently small, the invertibility of $A$  is obvious.
The detailed statement is proven in Lemma \ref{leminv} presented below.

So, since $d\xi=bdt+(\sigma,dW),$ then collecting the `$dt$'
terms we obtain the \textit{formula for the drift $b$}
\begin{equation}\label{fieldb}
\begin{split}
b=A^{-1}\Big{[}(\mathcal{L}^\eps(u),\partial_\xi u)&-(Lv,\partial_\xi u)-(N(u,v),\partial_\xi u)\Big{]}\\
&+A^{-1}\Big{[}\frac{1}{2}(v,\partial_\xi^3u)-\frac{3}{2}(\partial_\xi^2 u,\partial_\xi u)
\Big{]}(\mathcal{Q}\sigma,\sigma)
+A^{-1}(\sigma,\mathcal{Q} \partial_\xi^2 u),
\end{split}
\end{equation}
while the \textit{variance $\sigma$} is given by
\begin{equation}\label{var}
\sigma=A^{-1}\partial_\xi u.
\end{equation}

Finally, as promised, we prove the following lemma which
establishes the invertibility of $A$ and the asymptotic behavior
of $A^{-1}$ as $\eps\rightarrow 0^{+}$.
\begin{lemma}\label{leminv}
For $k>5/2$ and a fixed constant $c>0$, if $\|v\|<c\eps^{k-2}$,
then there exists a constant $C_0>0$ such that $A\geq C_0/{\eps}$
and therefore $A^{-1}$ exists and
\begin{equation}\label{am}
|A^{-1}|=\mathcal{O}(\eps)
\quad\text{as}\quad
\eps\to 0^+.
\end{equation}
\end{lemma}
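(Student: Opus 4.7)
The plan is to show that the first term $\|\partial_\xi u\|^2$ in the definition of $A$ is of order $\eps^{-1}$ and dominates the correction $(v,\partial_\xi^2 u)$, so that $A\geq C_0/\eps$ and hence $A^{-1}=\mathcal O(\eps)$. Both ingredients rest on the sharp-interface concentration of the droplet state $u(\cdot,\xi,\eps)$: near the interface $\Gamma(\xi)$, the function $u$ is to leading order $U(r/\eps)$ in the local coordinates $(r,s)$ associated with $\Gamma$, so that $\xi$-differentiation produces one factor of $\eps^{-1}$ per derivative, but the resulting object is supported in a tube of width $\mathcal O(\eps)$ around $\Gamma$, gaining back one power of $\eps$ after an $L^2$ integration. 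These are precisely the kinds of estimates collected in Section \ref{sec44}.

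First I would invoke the lower bound
\[
\|\partial_\xi u\|^2 \;\geq\; \frac{C_1}{\eps}
\]
which follows because, modulo correctors of higher order in $\eps$, $\partial_\xi u\approx -\eps^{-1}\dot U(r/\eps)\,\partial_\xi r$ with $\partial_\xi r$ bounded away from zero on the portion of $\Gamma$ carrying the layer; integrating $\eps^{-2}\dot U(r/\eps)^2$ over a tube of width $\mathcal O(\eps)$ gives the stated $\mathcal O(\eps^{-1})$ behavior. Next I would use the matching upper bound
\[
\|\partial_\xi^2 u\| \;\leq\; C_2\,\eps^{-3/2},
\]
obtained in the same way from $\partial_\xi^2 u \sim \eps^{-2}\ddot U(r/\eps)$ concentrated on a set of measure $\mathcal O(\eps)$. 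Both bounds are consequences of the asymptotic construction in \cite{acfu} together with the higher-order analogues derived in Section \ref{sec44}.

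With these two inputs, the estimate is immediate. By Cauchy--Schwarz and the assumption $\|v\|<c\eps^{k-2}$,
\[
|(v,\partial_\xi^2 u)|\;\leq\;\|v\|\,\|\partial_\xi^2 u\|\;\leq\; cC_2\,\eps^{k-7/2}.
\]
Since $k>5/2$, the exponent $k-7/2>-1$, so this term is $o(\eps^{-1})$ as $\eps\to 0^+$. Therefore, for $\eps$ sufficiently small,
\[
A \;=\; \|\partial_\xi u\|^2-(v,\partial_\xi^2 u)\;\geq\; \frac{C_1}{\eps}-cC_2\,\eps^{k-7/2}\;\geq\; \frac{C_0}{\eps},
\]
with $C_0=C_1/2$, say. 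Inverting gives $|A^{-1}|\leq \eps/C_0=\mathcal O(\eps)$, as claimed.

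The main obstacle is really a bookkeeping issue rather than a conceptual one: one must be sure that the corrector terms in the asymptotic expansion of $u$ (including the corner layers where $\Gamma$ meets $\partial\Omega_\delta$) and the $\xi$-dependence of the local coordinates $(r,s)$ do not spoil the leading-order scalings $\|\partial_\xi u\|^2\sim \eps^{-1}$ and $\|\partial_\xi^2 u\|\sim \eps^{-3/2}$. This is exactly the content of the technical estimates prepared in Section \ref{sec44}, and once those are available the argument reduces to the short Cauchy--Schwarz comparison above. The threshold $k>5/2$ is dictated precisely by the need to beat the product $\eps^{k-2}\cdot\eps^{-3/2}$ against $\eps^{-1}$.
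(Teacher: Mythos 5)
Your proof is correct and follows essentially the same route as the paper: both rely on the bounds $\|\partial_\xi u\|\geq C\eps^{-1/2}$ and $\|\partial_\xi^2 u\|\leq C\eps^{-3/2}$ from \cite{acfu}, apply Cauchy--Schwarz to $(v,\partial_\xi^2 u)$, and use $k>5/2$ to make $\eps^{k-7/2}=o(\eps^{-1})$. The paper merely phrases the intermediate comparison by rewriting $\|\partial_\xi^2 u\|$ in terms of $\|\partial_\xi u\|^2$ before absorbing the cross term, which is a cosmetic rather than substantive difference.
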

\begin{proof}
From \cite{acfu} (p. 297) we have the following estimates
\begin{equation}\label{u1}
\|\partial_\xi u\|\geq C^{-1}\eps^{-1/2}
\qquad \text{and} \qquad
\|\partial_\xi^2 u\|\leq C \eps^{-3/2},
\end{equation}
so,
\begin{equation}\label{u3}
\|\partial_\xi^2 u\|\leq C\eps^{-1}\eps^{-1/2}\leq
C^{3}\|\partial_\xi u\|^2\eps^{-1/2}.
\end{equation}
 Easily, \eqref{u1}, \eqref{u3} yield
\begin{equation}\label{u4}
\begin{split}
|A|\geq A=\|\partial_\xi u\|^2-(v,\partial_\xi^2 u)
&\geq\|\partial_\xi u\|^2-\|v\|\|\partial_\xi^2 u\|\geq
\|\partial_\xi u\|^2- C\eps^{k-2}\|\partial_\xi^2 u\|\\
&\geq \|\partial_\xi u\|^2-\tilde C\eps^{k-2}\eps^{-1/2}\|\partial_\xi u\|^2\\
&\geq C\eps^{-1}\Big{[}1+\mathcal{O}(\eps^{k-5/2})\Big{]}\;.
\end{split}
\end{equation}
Hence, provided $k>5/2$, we obtain
$$|A^{-1}|\leq
C\eps\Big{[}1+\mathcal{O}(\eps^{k-5/2})\Big{]}^{-1}=\mathcal{O}(\eps).$$
\end{proof}

\begin{remark}
\label{rem:just} We note that the assumption of $\xi$ being a
diffusion process is not very restrictive. Following the steps of
the derivation backwards, it can be established that for any pair
$(\xi,v)$ where $\xi$ solves (\ref{motion**}) and $v$ solves
\[dv = d[\partial_\xi u]+ \mathcal{L}^\eps(v+u)+f(v+u)dt + dW,
\]
then $v\perp \partial_\xi u$ in $L^2$ and the function $w=u+v$
solves the mass conserving Allen-Cahn equation with noise. See
also the analytical analogous results for the stochastic
Cahn-Hilliard equation in \cite{abk}, or the detailed discussion
in \cite{Weber:thesis} for Allen-Cahn equation without mass-conservation.
\end{remark}


\subsection{The approximate stochastic o.d.e. for the droplet's motion}
Now we proceed by proving the main theorems of Section 2 that
analyze the droplet's exact dynamics and approximations thereof,
in terms of $\eps$.%

Here, we need to assume bounds on the $H^1$-norm of $v$, as we are
not able to bound the nonlinearity otherwise, since the
$L^2$-bound of $v$ cannot control the cubic nonlinearity.

\begin{theorem}
\label{stdyn} For some  $k> 5/2$ and fixed small
$\tilde\kappa>0$, suppose that
$w(0)\in\mathcal{N}_{L^2}^{\eta\eps^{k-2}}$ and that $w(0)
\in\mathcal{N}_{H_\eps^1}^{\eta\eps^{k-3-\tilde\kappa}}$ for any
fixed  constant $\eta$. Then, locally in time (as long as
$w(t)\in\mathcal{N}_{H_\eps^1}^{C\eps^{k-3-\tilde\kappa}} \cap
\mathcal{N}_{L^2}^{C\eps^{k-2}}$ for any fixed large constant
$C>\eta$), it holds that
\begin{equation}\label{adyn}
\begin{split}
d\xi=\eps^2c(\xi(t),\eps)dt +\mathcal{O}(\eps^{k-1/2}\delta^2)dt
+\mathcal{O}(\eps^{\min\{2k-11/2-2\tilde\kappa,3k-17/2-2\tilde\kappa,k-3/2\}})
dt +d\mathcal{A}_t,
\end{split}
\end{equation}
where the stochastic process $\mathcal{A}_t$ defined in (\ref{def:dAt})
is the part in the equation for $\xi$, which arises due to the presence of noise.
\end{theorem}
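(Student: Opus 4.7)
\smallskip

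The plan is to begin from the exact stochastic equation \eqref{motion**} derived above. Inspecting the right-hand side, the three stochastic-dominated terms
\[
A^{-1}\Big[\tfrac{1}{2}(v,\partial_\xi^3 u)-\tfrac{3}{2}(\partial_\xi^2 u,\partial_\xi u)\Big](\mathcal{Q}\sigma,\sigma)\,dt
+A^{-1}(\sigma,\mathcal{Q}\partial_\xi^2 u)\,dt
+A^{-1}(\partial_\xi u,dW)
\]
are exactly $d\mathcal{A}_t$ in \eqref{def:dAt}. So the task reduces to analyzing the purely deterministic drift
\[
\mathcal{D}:=A^{-1}\bigl[(\mathcal{L}^\eps(u),\partial_\xi u)-(Lv,\partial_\xi u)-(N(u,v),\partial_\xi u)\bigr]
\]
and showing $\mathcal{D}=\eps^2 c(\xi,\eps)+\mathcal{O}(\eps^{k-1/2}\delta^2)+\mathcal{O}(\eps^{\min\{2k-11/2-2\tilde\kappa,3k-17/2-2\tilde\kappa,k-3/2\}})$. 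Throughout, I would systematically invoke Lemma~\ref{leminv} (so $A^{-1}=\mathcal{O}(\eps)$), the droplet identity \eqref{a4.1}, the shape estimate \eqref{cest} for $c$, and the derivative bounds $\|\partial_\xi u\|\sim\eps^{-1/2}$, $\|\partial_\xi^2 u\|\lesssim\eps^{-3/2}$ from \eqref{u1}. The higher order $\|\partial_\xi^j u\|$ and $\|\nabla\partial_\xi^j u\|$ bounds used below are exactly those provided in Section~\ref{sec44}.

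For the leading term I would write $(\mathcal{L}^\eps(u),\partial_\xi u)=\eps^2 c(\xi,\eps)\|\partial_\xi u\|^2+\mathcal{O}(\eps^{K})$, then use the chain of identities in the proof of Lemma~\ref{leminv} to obtain $A^{-1}\|\partial_\xi u\|^2=1+\mathcal{O}(\eps^{k-5/2})$. Combined with $c=\mathcal{O}(\delta^2)$ from Lemma~\ref{lem:shapec}, this gives
\[
A^{-1}(\mathcal{L}^\eps(u),\partial_\xi u)=\eps^2 c(\xi,\eps)+\mathcal{O}(\eps^{k-1/2}\delta^2)+\mathcal{O}(\eps^{K+1/2}),
\]
and the last term is absorbed by choosing $K$ large. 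For $A^{-1}(Lv,\partial_\xi u)$ the straightforward Cauchy–Schwarz bound, together with $\|v\|\le C\eps^{k-2}$ and the $\eps$-scaling of $\|L\partial_\xi u\|\lesssim\eps^{-1/2}$, yields
\[
\bigl|A^{-1}(Lv,\partial_\xi u)\bigr|=\bigl|A^{-1}(v,L\partial_\xi u)\bigr|\le C\eps\cdot\eps^{k-2}\cdot\eps^{-1/2}=\mathcal{O}(\eps^{k-3/2}),
\]
which produces the $\eps^{k-3/2}$ contribution.

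The main obstacle is the cubic nonlinear term $A^{-1}(N(u,v),\partial_\xi u)$, since the $L^2$ bound on $v$ alone cannot control $\|v\|_{L^3}^3$ or $\|v\|_{L^4}^4$. This is exactly where the $H^1_\eps$-hypothesis is indispensable. Writing $N(u,v)=3uv^2+v^3-\tfrac{1}{|\Omega_\delta|}\!\int(3uv^2+v^3)$ (the nonlocal piece drops out upon pairing with $\partial_\xi u$ because $\int\partial_\xi u=\partial_\xi\!\int u=0$ by \eqref{a4.1}), I would control the quadratic piece via
\[
\Bigl|\int 3uv^2\partial_\xi u\Bigr|\le C\|\partial_\xi u\|\,\|v\|_{L^4}^2
\]
and the cubic piece via $\|\partial_\xi u\|\,\|v\|_{L^6}^3$. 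The two-dimensional Gagliardo–Nirenberg interpolation inequalities $\|v\|_{L^4}^2\lesssim\|v\|\,\|\nabla v\|$ and $\|v\|_{L^6}^3\lesssim\|v\|\,\|\nabla v\|^2$, together with $\|v\|\le C\eps^{k-2}$ and the inequality $\|\nabla v\|\le C\eps^{k-4-\tilde\kappa}$ extracted from the $H^1_\eps$-tube, produce terms of order $\eps^{2k-13/2-\tilde\kappa}$ and $\eps^{3k-19/2-2\tilde\kappa}$ respectively. Multiplying by $A^{-1}=\mathcal{O}(\eps)$ gives the $\mathcal{O}(\eps^{2k-11/2-2\tilde\kappa})$ and $\mathcal{O}(\eps^{3k-17/2-2\tilde\kappa})$ contributions in \eqref{adyn}, up to the generous $\tilde\kappa$-slack. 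Collecting the four estimates into \eqref{motion**} yields \eqref{adyn}; the bookkeeping on $\tilde\kappa$ and the careful use of the $H^1_\eps$ bound to tame the cubic term is the step requiring the most attention.
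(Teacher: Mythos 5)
Your proposal follows the paper's proof essentially term by term: you start from the exact SDE \eqref{motion**}, isolate the deterministic drift, use the droplet identity \eqref{a4.1} together with $A^{-1}\|\partial_\xi u\|^2 = 1+\mathcal{O}(\eps^{k-5/2})$ for the leading term, dualize $(Lv,\partial_\xi u)=(v,L\partial_\xi u)$ with the third-derivative bound for the linear piece, and invoke the Gagliardo--Nirenberg inequalities $\|v\|_{L^4}^2\lesssim\|v\|\,\|\nabla v\|$ and $\|v\|_{L^6}^3\lesssim\|v\|\,\|\nabla v\|^2$ with the $H^1_\eps$-tube hypothesis for the cubic term, exactly as the paper does in Lemma~\ref{lem:Bbou} and the proof of Theorem~\ref{stdyn}. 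The minor discrepancies you note (slightly tighter $\tilde\kappa$ exponents, $\eps^{K+1/2}$ vs.\ $\eps^{K+1}$) are within the slack and your explicit observation that the nonlocal part of $N$ annihilates against $\partial_\xi u$ because $\int_{\Omega_\delta}\partial_\xi u\,dx=0$ is a welcome clarification left implicit in the paper.
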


Recall Lemma \ref{lem:shapec}, where $c=\mathcal{O}(\delta^2)$.
Note that we need $k\geq15/4$ to have the error term in \eqref{adyn} above of order $\mathcal{O}(\eps^2)$.

Using the results of  p. 297 of \cite{acfu}, we obtain:
\begin{lemma}\label{lem:Bbou} Under the assumptions of Theorem
\ref{stdyn}, 
it holds that
 $$
\Big{|}(\mathcal{L}^\eps(u),\partial_\xi u)-(Lv,\partial_\xi
(u))-(N(u,v),\partial_\xi u)\Big{|}=
\Big{[}\eps^2c+\mathcal{O}(\eps^{2k-11/2-2\tilde\kappa})
+\mathcal{O}(\eps^{3k-17/2-2\tilde\kappa})
+\mathcal{O}(\eps^{k-3/2})\Big{]}\|\partial_\xi
u\|^2.$$
\end{lemma}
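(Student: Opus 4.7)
The strategy is to estimate each of the three inner products separately and then combine them, factoring out $\|\partial_\xi u\|^2$ at the end. The first two terms rely essentially on Lemma \ref{lem:defu}, the orthogonality $(v,\partial_\xi u)=0$, mass conservation $\int_{\Omega_\delta} v\,dx=0$, and the higher-derivative estimates from Section \ref{sec44}. The third (nonlinear) term will be controlled by the 2D Ladyzhenskaya and Sobolev inequalities together with the prescribed $L^2$ and $H^1_\eps$ bounds on $v$.

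For $(\mathcal{L}^\eps(u),\partial_\xi u)$ I would substitute the representation $\mathcal{L}^\eps(u)=\eps^2 c\,\partial_\xi u+\mathcal{O}_{L^\infty}(\eps^K)$ from Lemma \ref{lem:defu}, obtaining $\eps^2 c\|\partial_\xi u\|^2$ plus a remainder of order $\eps^K|\Omega_\delta|^{1/2}\|\partial_\xi u\|$, which is negligible once $K$ is taken large. For $(Lv,\partial_\xi u)$ I would use that $L=-\eps^2\Delta+f'(u)$ is self-adjoint under Neumann boundary conditions and move it onto $\partial_\xi u$; differentiating the identity of Lemma \ref{lem:defu} in $\xi$ yields
\[
L\partial_\xi u=-\partial_\xi\mathcal{L}^\eps(u)+\frac{1}{|\Omega_\delta|}\int_{\Omega_\delta}f'(u)\partial_\xi u\,dx,
\]
whose constant-in-$x$ term pairs to zero against $v$ because $\int_{\Omega_\delta} v\,dx=0$ (both $w$ and $u$ have spatial mean $|\Omega_\delta|-\pi$). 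The piece $\eps^2 c_\xi(v,\partial_\xi u)$ then drops by orthogonality, leaving $\eps^2 c(v,\partial_\xi^2 u)$ as the main surviving contribution, which is bounded by $\eps^2|c|\|v\|\|\partial_\xi^2 u\|\leq C\delta^2\eps^{k-3/2}$, using $|c|=\mathcal{O}(\delta^2)$ from Lemma \ref{lem:shapec}, the hypothesis $\|v\|\leq C\eps^{k-2}$, and the second-derivative bound $\|\partial_\xi^2 u\|\leq C\eps^{-3/2}$ from Section \ref{sec44}. After factoring $\|\partial_\xi u\|^2\sim \eps^{-1}$, this comfortably fits inside the claimed $\mathcal{O}(\eps^{k-3/2})\|\partial_\xi u\|^2$ remainder.

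For $(N(u,v),\partial_\xi u)$ I would expand $f(u+v)-f(u)-f'(u)v=3uv^2+v^3$, so that
\[
N(u,v)=3uv^2+v^3-\frac{1}{|\Omega_\delta|}\int_{\Omega_\delta}\bigl[f'(u)v+3uv^2+v^3\bigr]dx,
\]
and observe that the spatial-mean part is constant in $x$ and hence vanishes against $\partial_\xi u$ because $\int_{\Omega_\delta}\partial_\xi u\,dx=\partial_\xi(|\Omega_\delta|-\pi)=0$. What remains is $\int(3uv^2+v^3)\partial_\xi u\,dx$. For the quadratic-in-$v$ piece I apply Hölder together with the 2D Ladyzhenskaya inequality $\|v\|_{L^4}^2\leq C\|v\|\|v\|_{H^1}$ and $\|u\|_{L^\infty}\leq C$, obtaining $C\eps^{k-2}\cdot\eps^{k-4-\tilde\kappa}\cdot\eps^{-1/2}$, which matches $\mathcal{O}(\eps^{2k-11/2-2\tilde\kappa})\|\partial_\xi u\|^2$ after dividing by $\|\partial_\xi u\|^2\sim\eps^{-1}$; for the cubic piece I use $\|v\|_{L^6}\leq C\|v\|_{H^1}$ (2D Sobolev) and Hölder to get $C\|v\|_{H^1}^3\|\partial_\xi u\|$, which matches $\mathcal{O}(\eps^{3k-17/2-2\tilde\kappa})\|\partial_\xi u\|^2$ in the same way. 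Throughout I convert the $H^1_\eps$ neighborhood assumption into $\|\nabla v\|\leq C\eps^{k-4-\tilde\kappa}$, so that $\|v\|_{H^1}\leq C\eps^{k-4-\tilde\kappa}$ for $\eps$ small.

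The main obstacle is bookkeeping: one must ensure that $\|\partial_\xi^2 u\|\leq C\eps^{-3/2}$ and that $\partial_\xi\mathcal{O}_{L^\infty}(\eps^K)$ remains quantitatively small, both of which require differentiating the $(r,s)$-coordinate asymptotic construction of $u$ from \cite{acfu}, remembering that both $r$ and $s$ depend on $\xi$—this is exactly the content of Section \ref{sec44} and is the reason that work is postponed to a dedicated section. A secondary nuisance is the $2\tilde\kappa$ loss rather than $\tilde\kappa$ in the nonlinear exponents, which reflects the fact that 2D Sobolev embedding $H^1\hookrightarrow L^p$ holds only for $p<\infty$ (so interpolating toward $L^\infty$-type bounds forces a small loss); since $\tilde\kappa>0$ is arbitrary, this is harmless, and the looser form of the lemma statement is chosen simply to bundle all such small losses uniformly.
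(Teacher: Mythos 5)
Your estimates of $(\mathcal{L}^\eps(u),\partial_\xi u)$ and $(N(u,v),\partial_\xi u)$ coincide with the paper's: the first by direct substitution of the droplet identity, the second by expanding $N(u,v)=3uv^2+v^3+\text{const}$, killing the constant against $\partial_\xi u$ (since $\int_{\Omega_\delta}\partial_\xi u\,dx=\partial_\xi(|\Omega_\delta|-\pi)=0$), and then applying $\|v\|_{L^4}^2\leq\|v\|_{H^1}\|v\|$, $\|v\|_{L^6}^3\leq\|v\|_{H^1}^2\|v\|$ together with $\|v\|_{H^1}\leq\eps^{-1}\|v\|_{H^1_\eps}$. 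One side remark: the $2\tilde\kappa$ is not a Sobolev-embedding loss (the 2D embeddings into $L^4$ and $L^6$ are exact); it enters simply because the $H^1_\eps$-tube radius $\eps^{k-3-\tilde\kappa}$, hence $\|v\|_{H^1}\leq C\eps^{k-4-\tilde\kappa}$, appears quadratically in the cubic piece. Your handling of $(Lv,\partial_\xi u)$, by contrast, is a genuinely different route. The paper moves $L$ onto $\partial_\xi u$ by self-adjointness, writes out $L\partial_\xi u=-\eps^2\Delta\partial_\xi u+f'(u)\partial_\xi u$, and bounds both pieces by Cauchy--Schwarz using $\|\Delta\partial_\xi u\|\sim\|\partial_\xi^3 u\|\leq C\eps^{-5/2}$ from Section \ref{sec44}, yielding $|(Lv,\partial_\xi u)|\leq C\eps^{k-3/2}\|\partial_\xi u\|^2$. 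You instead differentiate the identity $\mathcal{L}^\eps(u)=\eps^2c\,\partial_\xi u+\mathcal{B}$ in $\xi$ and use orthogonality $(v,\partial_\xi u)=0$ together with $\int v\,dx=0$ to cancel almost everything, reducing to $\eps^2c(v,\partial_\xi^2 u)$ plus $(v,\partial_\xi\mathcal{B})$ --- the first of which is cleaner, with a $\delta^2$ prefactor from $|c|=\mathcal{O}(\delta^2)$.

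The gap is that your route requires a quantitative bound on $\partial_\xi\mathcal{B}$, where $\mathcal{B}=\mathcal{O}_{L^\infty}(\eps^K)$, and no such bound exists in this paper. You cite Section \ref{sec44} for this, but that section only proves estimates for $\|\partial_\xi u\|_{L^1}$, $\|\partial_\xi^j u\|$ with $j=1,2,3$, and $(\partial_\xi^2 u,\partial_\xi u)$; it says nothing about the $\xi$-derivative of the $\mathcal{O}_{L^\infty}(\eps^K)$ remainder. It is plausible from the construction in \cite{acfu} that $\partial_\xi\mathcal{B}$ also remains of arbitrarily high order, but this is not automatic: $\mathcal{B}$ involves smooth cutoffs at scale $\eps\log^2\eps$ whose $\xi$-derivatives pick up factors of $1/(\eps\log^2\eps)$, and one must check that the advertised asymptotic order actually survives the differentiation. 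As written, the step from ``differentiating the droplet identity'' to your claimed bound is incomplete. The paper's elementary Cauchy--Schwarz estimate of $(v,L\partial_\xi u)$ is designed precisely to avoid this dependence, which is why the third-derivative estimate $\|\partial_\xi^3 u\|\leq C\eps^{-5/2}$ is the relevant ingredient established in Section \ref{sec44}.
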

\begin{proof}
Observe first that by definition \eqref{a4.1} and for some large
$K>0$, we get
\[
(\mathcal{L}^\eps(u),\partial_\xi u) = (c \eps^2\|\partial_\xi
u\|^2+ \CO(\eps^{K})\| \partial_\xi u\|_{L^1} )\;.
\]
Secondly, we will use the following interpolations
\[
\|v\|_{L^4}^2 \leq \|v\|_{H^1}\|v\| \qquad \text{and}\qquad \|v\|_{L^6}^3 \leq \|v\|^2_{H^1}\|v\|
\;.
\]
Since $f$ is  cubic  and $u$ is uniformly bounded, we arrive at
\[| (N(u,v),\partial_\xi u) |
\leq C (\|v\|_{H^1}+\|v\|^2_{H^1} )\|v\|  \| \partial_\xi u\|
\leq C( \eps^{2k-6-2\tilde\kappa} +
\eps^{3k-9-2\tilde\kappa}) \eps^{1/2} \|
\partial_\xi u\|^2\;,
\]
where we used that $$\|v\|_{H^1}\leq \eps^{-1} \|v\|_{H^1_\eps}.$$
Note that this is the only argument in this proof where the
$H^1$-norm appears.

For the third term, using that $\|\partial_\xi u\|\geq
C\eps^{-1/2}$ and $\|\Delta\partial_\xi
u\|\sim\|u_\xi^{(3)}\|\leq C\eps^{-5/2}$ (cf.\ Appendix) we obtain
\begin{equation}
\label{sep1}
\begin{split}
|(Lv,\partial_\xi u)| &= |(v, L [\partial_\xi u])| =
|-\eps^2(v,\Delta \partial_\xi u)+(v,f'(u)\partial_\xi u)|\\
&\leq \eps^2\|v\|\eps^{-5/2}+C\|v\|\|\partial_\xi u\|
\\
&\leq C\eps^{k-2-1/2}+C\eps^{k-2}\|\partial_\xi u\|
\\
&\leq C\eps^{k-3/2}\|\partial_\xi u\|^2.
\end{split}
\end{equation}
\end{proof}

\begin{proof}[Proof of Theorem \ref{stdyn}]
Setting
$$
B:=\Big{[}\frac{1}{2}(v,\partial_\xi^3 u)-\frac{3}{2}(\partial_\xi^2 u,\partial_\xi u)\Big{]},
$$
we get by \eqref{motion*}
\begin{equation}\label{motion***}
\begin{split}
Ad\xi=\Big{[}(\mathcal{L}^\eps(u),\partial_\xi u)&-(Lv,\partial_\xi u)-(N(u,v),\partial_\xi u)\Big{]}dt\\
&+B(\mathcal{Q}\sigma,\sigma)dt
+(\sigma,\mathcal{Q}\partial_\xi^2 u)dt
+(\partial_\xi u,dW).
\end{split}
\end{equation}
Lemma \ref{lem:Bbou} yields

\begin{equation*}
\begin{split}
Ad\xi=&
\Big{[}\eps^2c(\xi,\eps)+\mathcal{O}(\eps^{2k-11/2-2\tilde\kappa})
+\mathcal{O}(\eps^{3k-17/2-2\tilde\kappa})
+\mathcal{O}(\eps^{k-3/2})\Big{]}\|\partial_\xi u\|^2dt
\\
&+B
(\mathcal{Q}\sigma,\sigma)dt
+(\sigma,\mathcal{Q}
\partial_\xi^2 u)dt
+(\partial_\xi u,dW).
\end{split}
\end{equation*}
Therefore, we get by \eqref{def:dAt}
\begin{equation}\label{help9}
d\xi= A^{-1}\Big{[}\eps^2c(\xi,\eps)
+\mathcal{O}(\eps^{2k-11/2-2\tilde\kappa})
+\mathcal{O}(\eps^{3k-17/2-2\tilde\kappa})
+\mathcal{O}(\eps^{k-3/2})\Big{]}\|\partial_\xi (u)\|^2dt +
d\mathcal{A}_s.
\end{equation}
By Lemma \ref{leminv} we have locally in time, as long as
$w(t)\in
\mathcal{N}_{L^2}^{\eta\eps^{k-2}}$,
\begin{equation}\label{help11}
|A^{-1}|\|\partial_\xi u\|^2\leq
\Big{[}1+C\eps^{k-5/2}\Big{]}^{-1}=1+\mathcal{O}(\eps^{k-5/2}).\end{equation}
So, from \eqref{help9} analogously to the arguments in p. 297 of
\cite{acfu}, we have
\begin{equation}\label{help10}
\begin{split}
d\xi&=
\Big{[}\eps^2c(\xi,\eps)+\mathcal{O}(\eps^{2k-11/2-2\tilde\kappa})
+\mathcal{O}(\eps^{3k-17/2-2\tilde\kappa})
+\mathcal{O}(\eps^{k-3/2})\Big{]}
\Big{[}1+\mathcal{O}(\eps^{k-5/2})\Big{]}dt + d\mathcal{A}_s \\
& = \Big{[}\eps^2c(\xi,\eps) +\mathcal{O}(\eps^{k-1/2}\delta^2) +
\mathcal{O}(\eps^{\min\{2k-11/2-2\tilde\kappa,
3k-17/2-2\tilde\kappa,k-3/2\}})\Big{]} dt + d\mathcal{A}_s.
\end{split}
\end{equation}
Here, recall \eqref{cest}, i.e.,\
$c(\xi,\eps)=\mathcal{O}(\delta^2)$.
\end{proof}

In the following theorem we shall evaluate the noise effect in
the local in time stochastic dynamics \eqref{adyn} driven by the
additive term $d\mathcal{A}_s$ defined by \eqref{def:dAt}, which
supplies the deterministic dynamics of \cite{acfu} with an extra
deterministic drift and a noise term.

\begin{theorem}
\label{neffect}
Assume that in \eqref{a4.1} $k> 5/2$ and $w(0)
\in\mathcal{N}_{L^2}^{\eta\eps^{k-2}}$. Then, locally in time
(as long as $w(t) \in\mathcal{N}_{L^2}^{C\eps^{k-2}}$) the
noise induced terms  appearing in \eqref{adyn}
given by
\begin{equation*}
d\mathcal{A}_s=A^{-1}\Big{[}\frac{1}{2}(v,\partial_\xi^3
(u))-\frac{3}{2}(\partial_\xi^2 u,\partial_\xi u)\Big{]}
(\mathcal{Q}\sigma,\sigma)dt+A^{-1}(\sigma,\mathcal{Q}
\partial_\xi^2 u)dt+A^{-1}(\partial_\xi u,dW),
\end{equation*}
are estimated by
\begin{equation}\label{asest}
d\mathcal{A}_s
=\mathcal{O}(\eta_1)dt
+(\mathcal{O}_{L^2}(\eps^\frac{1}{2}),dW).
\end{equation}
Here, $\eta_1=\|Q\|$ is
the noise strength.

The leading order term in $\mathcal{O}(\eta_1)$ is
$$A^{-2}(\partial_\xi u,\mathcal{Q} \partial_\xi^2 u) +
\mathcal{O}(\eta_1\eps^{\min\{1,k-5/2\}}).$$
\end{theorem}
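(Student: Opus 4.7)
\medskip

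\noindent\textbf{Proof plan.} The starting point is to substitute $\sigma = A^{-1}\partial_\xi u$ from \eqref{var} into \eqref{def:dAt}, so that $d\mathcal{A}_s$ becomes a sum of three explicit terms, each expressed purely in terms of $\xi$-derivatives of the droplet profile $u$, the remainder $v$, the operator $\mathcal{Q}$ and $dW$:
\begin{equation*}
d\mathcal{A}_s
= A^{-3}\Bigl[\tfrac{1}{2}(v,\partial_\xi^3 u)-\tfrac{3}{2}(\partial_\xi^2 u,\partial_\xi u)\Bigr]
(\mathcal{Q}\partial_\xi u,\partial_\xi u)\,dt
+ A^{-2}(\partial_\xi u,\mathcal{Q}\partial_\xi^2 u)\,dt
+ A^{-1}(\partial_\xi u,dW).
\end{equation*}
I will then bound the three terms separately, using $|A^{-1}|=\mathcal{O}(\eps)$ from Lemma \ref{leminv}, the $L^2$-bounds $\|\partial_\xi u\|=\mathcal{O}(\eps^{-1/2})$, $\|\partial_\xi^2 u\|=\mathcal{O}(\eps^{-3/2})$, $\|\partial_\xi^3 u\|=\mathcal{O}(\eps^{-5/2})$ (the latter from Section \ref{sec44}), the hypothesis $\|v\|\leq C\eps^{k-2}$ on the neighborhood, and the operator bound $\|\mathcal{Q}\|=\eta_1$.

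\medskip

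\noindent\emph{The noise and leading deterministic terms.} For the stochastic integrand, simply
$\|A^{-1}\partial_\xi u\|\le |A^{-1}|\,\|\partial_\xi u\|=\mathcal{O}(\eps)\cdot\mathcal{O}(\eps^{-1/2})=\mathcal{O}(\eps^{1/2})$, which identifies the $(\mathcal{O}_{L^2}(\eps^{1/2}),dW)$ contribution. For the distinguished middle term, Cauchy--Schwarz and $(\mathcal{Q}a,b)\leq\eta_1\|a\|\|b\|$ give
\[
|A^{-2}(\partial_\xi u,\mathcal{Q}\partial_\xi^2 u)|
\leq A^{-2}\eta_1\|\partial_\xi u\|\|\partial_\xi^2 u\|
= \mathcal{O}(\eps^2)\cdot\eta_1\cdot\mathcal{O}(\eps^{-1/2})\cdot\mathcal{O}(\eps^{-3/2})=\mathcal{O}(\eta_1),
\]
and this is the announced leading order.

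\medskip

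\noindent\emph{The remaining bracket term.} The prefactor $(\mathcal{Q}\partial_\xi u,\partial_\xi u)\leq\eta_1\|\partial_\xi u\|^2$ is of size $\mathcal{O}(\eta_1\eps^{-1})$, and combined with $A^{-3}=\mathcal{O}(\eps^3)$ the whole factor $A^{-3}(\mathcal{Q}\partial_\xi u,\partial_\xi u)=\mathcal{O}(\eta_1\eps^{2})$. It remains to bound the two pieces in the bracket. The first is handled by $|(v,\partial_\xi^3 u)|\leq\|v\|\|\partial_\xi^3 u\|=\mathcal{O}(\eps^{k-9/2})$, so its contribution is $\mathcal{O}(\eta_1\eps^{2}\cdot\eps^{k-9/2})=\mathcal{O}(\eta_1\eps^{k-5/2})$. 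The second piece requires the \emph{improved} estimate $|(\partial_\xi^2 u,\partial_\xi u)|=\mathcal{O}(\eps^{-1})$; given this, its contribution is $\mathcal{O}(\eta_1\eps^{2}\cdot\eps^{-1})=\mathcal{O}(\eta_1\eps)$. Adding the two contributions yields a total error $\mathcal{O}(\eta_1\eps^{\min\{1,k-5/2\}})$, which is strictly smaller than $\mathcal{O}(\eta_1)$ for $k>5/2$.

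\medskip

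\noindent\emph{Main obstacle.} The only non-routine step is the sharper bound $|(\partial_\xi^2 u,\partial_\xi u)|=\mathcal{O}(\eps^{-1})$, since a direct Cauchy--Schwarz only gives $\mathcal{O}(\eps^{-2})$, which would make this piece of Term~1 comparable to the leading term and destroy the claim. The idea is to write $(\partial_\xi^2 u,\partial_\xi u)=\tfrac{1}{2}\partial_\xi\|\partial_\xi u\|^2$ and to exploit that the leading $\mathcal{O}(\eps^{-1})$ contribution to $\|\partial_\xi u\|^2$ is essentially position-independent (it is determined by the transverse profile $U(r/\eps)$ and the almost-constant arclength of the interface), so the $\xi$-derivative kills the dominant $\eps^{-1}$ part and leaves a remainder of order $\mathcal{O}(\eps^{-1})$ at worst; the quantitative version of this is exactly the type of higher-order estimate developed in Section \ref{sec44}. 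Once this bound is in hand, collecting all three pieces gives precisely \eqref{asest} with the stated leading order.
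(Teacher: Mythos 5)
Your proposal is correct and follows essentially the same route as the paper: substitute $\sigma=A^{-1}\partial_\xi u$, bound each of the three pieces with $|A^{-1}|=\mathcal{O}(\eps)$ together with the $\eps$-scalings of $\|\partial_\xi^j u\|$, and recognize that the sharp estimate $(\partial_\xi^2 u,\partial_\xi u)=\mathcal{O}(\eps^{-1})$ from Section~\ref{sec44} (Theorem~\ref{fest}) is the non-routine ingredient needed to demote that contribution below the leading term; the paper merely packages the same bounds through the intermediate quantities $\|\sigma\|\le C\eps^{1/2}$ and $|(\mathcal{Q}\sigma,\sigma)|\le C\eta_1\eps$.
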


We can summarize the last two theorems in the next result.

\begin{corollary}
\label{cor:sde}
 Under the assumptions of Theorem \ref{stdyn} 
 it holds that
\begin{equation}
\label{adyn*}
\begin{split}
 d\xi=\eps^2c(\xi(t),\eps)dt
 + & A^{-2}(\partial_\xi u,\mathcal{Q} \partial_\xi^2 u) dt
 +A^{-1}(\partial_\xi u ,dW)
 \\&+\mathcal{O}(\eps^{\min\{2k-11/2-2\tilde\kappa,3k-17/2-2\tilde\kappa,k-1/2\}}
 +\eta_1\eps^{\min\{1,k-5/2\}}) dt.
\end{split}
\end{equation}
\end{corollary}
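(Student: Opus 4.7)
The corollary amounts to substituting the decomposition of $d\mathcal{A}_s$ from Theorem \ref{neffect} into the expansion of $d\xi$ provided by Theorem \ref{stdyn}, then identifying the explicit leading-order pieces and collecting remainders. The plan proceeds in three steps.

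First, I would combine the two theorems directly. Theorem \ref{stdyn} gives
\[
d\xi = \eps^2 c(\xi(t),\eps)\,dt + \mathcal{O}\bigl(\eps^{k-1/2}\delta^2 + \eps^{\min\{2k-11/2-2\tilde\kappa,\,3k-17/2-2\tilde\kappa,\,k-3/2\}}\bigr) dt + d\mathcal{A}_s,
\]
so it suffices to expand $d\mathcal{A}_s$ using Theorem \ref{neffect}, which asserts $d\mathcal{A}_s = \mathcal{O}(\eta_1)dt + (\mathcal{O}_{L^2}(\eps^{1/2}), dW)$ with explicit leading drift $A^{-2}(\partial_\xi u,\mathcal{Q}\partial_\xi^2 u) + \mathcal{O}(\eta_1\eps^{\min\{1,k-5/2\}})$.

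Second, I would extract the explicit terms appearing in \eqref{adyn*} directly from the definition \eqref{def:dAt}. The stochastic integral $A^{-1}(\partial_\xi u, dW)$ is already the last term of \eqref{def:dAt}. For the explicit drift, I use the formula $\sigma = A^{-1}\partial_\xi u$ from \eqref{var} in the middle term of \eqref{def:dAt}:
\[
A^{-1}(\sigma,\mathcal{Q}\partial_\xi^2 u)\,dt = A^{-1}(A^{-1}\partial_\xi u,\mathcal{Q}\partial_\xi^2 u)\,dt = A^{-2}(\partial_\xi u,\mathcal{Q}\partial_\xi^2 u)\,dt,
\]
which recovers the $\mathcal{O}(\eta_1)$-sized drift claimed in the corollary.

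Third, the remaining first term of $d\mathcal{A}_s$, namely
\[
A^{-1} B\,(\mathcal{Q}\sigma,\sigma)\,dt, \qquad B := \tfrac{1}{2}(v,\partial_\xi^3 u) - \tfrac{3}{2}(\partial_\xi^2 u,\partial_\xi u),
\]
has to be absorbed into the $\mathcal{O}(\eta_1\eps^{\min\{1,k-5/2\}})\,dt$ error. Using Lemma \ref{leminv} to get $|A^{-1}| = \mathcal{O}(\eps)$, the bound $(\mathcal{Q}\sigma,\sigma) \leq \eta_1\|\sigma\|^2 = \mathcal{O}(\eta_1\eps)$, together with $\|v\|\leq C\eps^{k-2}$ and the higher-order derivative bounds on $\partial_\xi^l u$ collected in Section \ref{sec44}, this contribution is controlled by $\mathcal{O}(\eta_1\eps^{\min\{1,k-5/2\}})$.

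Finally, I would merge all error contributions: the deterministic $\mathcal{O}(\eps^{k-1/2}\delta^2) = \mathcal{O}(\eps^{k-1/2})$ and $\mathcal{O}(\eps^{\min\{\cdots,k-3/2\}})$ from Theorem \ref{stdyn}, together with the noise-induced $\mathcal{O}(\eta_1\eps^{\min\{1,k-5/2\}})$ from step three, yielding the combined bound in \eqref{adyn*}. The main obstacle is purely bookkeeping: carefully tracking the competing scalings in $\eps$, $\eta_1$, and $k$ to ensure the first term of $d\mathcal{A}_s$ is genuinely smaller than the leading $A^{-2}(\partial_\xi u,\mathcal{Q}\partial_\xi^2 u)$ drift by the stated factor $\eps^{\min\{1,k-5/2\}}$, and that the error exponents assembled into the final $\min$ expression are consistent across the two theorems.
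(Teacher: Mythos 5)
Your proposal is correct and matches the paper's intended route: the corollary is explicitly introduced as a summary of Theorems \ref{stdyn} and \ref{neffect}, and the paper's ensuing proof block is precisely the set of estimates for Theorem \ref{neffect} that you invoke, with the leading drift and noise terms extracted exactly as you describe by inserting $\sigma=A^{-1}\partial_\xi u$ into \eqref{def:dAt} and bounding the remaining $A^{-1}B(\mathcal{Q}\sigma,\sigma)$ term by $\mathcal{O}(\eta_1\eps^{\min\{1,k-5/2\}})$. One minor bookkeeping remark: your merge of the Theorem \ref{stdyn} errors would actually give $\min\{\ldots,k-3/2\}$ in the exponent (since $\eps^{k-3/2}$ dominates $\eps^{k-1/2}\delta^2$), whereas \eqref{adyn*} prints $k-1/2$; this discrepancy appears to be a typo in the paper rather than a flaw in your argument.
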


\begin{proof}
We shall use some estimates proven in Section 4. 
For $\eps$ small, by \eqref{estder}, \eqref{eex},
and \eqref{estx} given in Theorems \ref{fest} and \ref{mainest},
it holds that
\begin{equation*}
\begin{split}
&|(\partial_\xi^2 u,\partial_\xi u)|=\mathcal{O}(\eps^{-1}), \quad
\|\partial_\xi u\|=\mathcal{O}(\eps^{-\frac{1}{2}}),\\
&\|\partial_\xi^2 u\|=\mathcal{O}(\eps^{-\frac{3}{2}}),\quad
\|\partial_\xi^3 u\|= \mathcal{O}(\eps^{-\frac{5}{2}}),\quad
\|u\|_{L^\infty}=\mathcal{O}(1).
\end{split}
\end{equation*}
We need to estimate, by means of upper bounds in terms of $\eps$,
all the new terms appearing in the stochastic dynamics. By Lemma
\ref{leminv} we have
$$|A^{-1}|= \mathcal{O}(\eps),
$$
as locally in time we assumed $\|v\|_{L^2}\leq C\eps^{k-2}.$

Recall from \eqref{help11}
\begin{equation*}
|A^{-1}|\|\partial_\xi u\|^2= 1+\mathcal{O}(\eps^{k-5/2}).
\end{equation*}
We estimate the variance
$\sigma$ by
$$
\|\sigma\|^2
=|A^{-1}|^2\|\partial_\xi u\|^2
=|A^{-1}||A^{-1}|\|\partial_\xi u\|^2
\leq C|A^{-1}|
\leq C\eps\;,
$$
and therefore, deduce that
$$
\|\sigma\|\leq C\eps^{\frac{1}{2}}.
$$

The term $A^{-1}(\partial_\xi^2 u,\partial_\xi
(u))(\mathcal{Q}\sigma,\sigma)$ is
estimated as follows:\\
For the induced $L^2(\Omega_\delta)$-operator norm $\|\cdot\|$,
we get
$$|(\mathcal{Q}\sigma,\sigma)|
\leq \|\mathcal{Q}\sigma\|\|\sigma\|
\leq \|\mathcal{Q}\|\|\sigma\|^2
\leq C \eta_1\eps .
$$
In addition, we have
\begin{equation*}
A^{-1}(\partial_\xi^2 u,\partial_\xi u)(\mathcal{Q}\sigma,\sigma)
\leq C\eps\eps^{-1}\|\mathcal{Q}\|\eps
= C \eps\eta_1.
\end{equation*}
Further, the following estimate holds true
\begin{equation*}
\begin{split}
|A^{-1}(\sigma,\mathcal{Q} \partial_\xi^2 u)|
\leq |A^{-1}||A^{-1}|\|\partial_\xi u\|\|\mathcal{Q}\|\|\partial_\xi^2 u\|
\leq C\eps^{2}\eps^{-1/2}\|\mathcal{Q}\| \eps^{-3/2}
= C \eta_1.
\end{split}
\end{equation*}
The remaining term depending on $v$ is bounded (as long as  $\|v\|_{L^2}\leq C\eps^{k-2}$)
by
\[
\frac{1}{2} |A^{-1}(v,\partial_\xi^3(u))(\mathcal{Q}\sigma,\sigma)|
\leq C \eps \|v\| \eps^{-5/2} \|Q\| \|\sigma\|^2 = C \eps^{k-5/2} \eta_1\;.
\]
This finishes the proof of the theorem.
\end{proof}


\subsection{Interpretation of the result}

Here, we present some comments on the results derived in the
section above.
\begin{remark}[It\^{o}-Stratonovich correction]
\label{rem:strato}
Let us look more closely on Corollary \ref{cor:sde}.
Using the It\^{o}-Stratonovich correction term, we obtain for a function $g(\xi)$
that the Stratonovich differential is
\[
(g,\circ dW) =  \frac12 (g,  \mathcal{Q} \partial_\xi g ) dt + (g, dW)\;,
\]
 where $\mathcal{Q}$ is the covariance operator of $W$.

 Thus, in our case we use that $\partial_\xi v=\partial_\xi u$,
 as the solution $w$ is independent of $\xi$, in order to obtain
 after some calculation
 \[
 d\mathcal{A}_t =  A^{-1}(\partial_\xi u,\circ dW)\;,
\]
which is the Wiener process $W$ projected onto the manifold
 of the small droplets. Here $\circ dW$ denotes the Stratonovic differential.
\end{remark}

\begin{remark}[Boundaries of constant curvature]
\label{rem:curv} If the noise is small, which we will need for
the attractivity result of the manifold, then the motion of the
droplet is to first order given by the $ d\xi = \eps^2c(\xi,\eps)
dt$, which is the deterministic result of \cite{acfu}. On
timescales of order $\eps^{-2}$ the droplet moves with velocity
determined by the changes in the curvature of the boundary.

If our domain has parts of \textit{constant curvature}, like a
\textit{circle} or the \textit{sides on a square}, then
$c(\xi,\eps)$, which depends essentially on the derivative of the
curvature, is $0$ and we expect the droplet to move with a Wiener
process projected to the manifold, i.e.,\ locally like a Brownian
motion. Nevertheless, here we need to look more closely into the
higher order (in $\delta$) corrections to $c(\xi,\eps)$, in order
to prove such a claim.
\end{remark}

\begin{remark}[Extremal points of curvature]
Similarly  to Remark \ref{rem:curv}, we could study the random fluctuations at the
stationary points of maximal or minimal curvature. In this case, at least
locally, we expect also the Brownian motion to dominate.
At points of minimal curvature the noise drives the droplet away 
from this unstable stationary situation.
At points of maximal curvature the droplet is deterministically
attracted, and noise only induces fluctuations around that point.

Moreover, an exit result of large deviation type from this point 
of maximal curvature should hold for the droplet.
But it is an interesting question, in which direction the droplet will exit.
For small amplitude noise, we conjecture that it moves along the
manifold and not exits in normal direction away from
boundary. Our present stability result does not answer this
question as the exponential time-scales present in 
large deviation problems are too long for our result.
\end{remark}

\begin{remark}[Large Noise]
\label{rem:LN} Our approximation of $\xi$ is valid for a general
noise strength $\eta_1:=\|\mathcal{Q}\|$ as long as
$\|v\|_{H_\eps^1}=\mathcal{O}(\eps^{k-(3^+)})$. If the noise
strength is large, we expect the droplet to move randomly,
independent of the curvature of the boundary.

But as we shall see in the sequel, we are not yet able to verify
the stability of the slow manifold for relatively large noise
strength. Only for a sufficiently smooth in space noise of
sufficiently small strength, the attractivity result is
established on arbitrarily long time intervals. As we prove, the
restrictions that we must impose on the noise strength for
maintaining $\|v\|_{H_\eps^1}=\mathcal{O}(\eps^{k-3-})$, lead
to a noise that does not dominate the deterministic dynamics.

More specifically, as we prove, for any $k> 5$ one of the
restrictions is given by
$$\eta_1\leq\eta_0=\mathcal{O}(\eps^{2k - 2-}),$$  resulting in a noise
 which is small
when compared with the original  dynamics of the deterministic problem.

This is mainly due to the fact that we are not able yet to find an
efficient way to control the nonlinearity further away from the
manifold of droplet states. Moreover, the linear attractivity of
the manifold is pretty weak. If one could overcome either one of
these difficulties, then the case of a much larger noise strength
could be analyzed; this is a difficult open problem to be
investigated in a future work.

\end{remark}

%
\section{Stochastic stability}\label{sec3}
%
%
%

In this section, we establish the stability of our
droplet-manifold in the $L^2$-norm for any polynomial times of
order $\mathcal{O}(\eps^{-q})$, $q>0$. Note that we could not
apply standard large deviation type estimates, since we are not
exiting from a single fixed point, and as we only have a manifold
of approximate solutions. Moreover, results in the spirit of
Berglund and Gentz \cite{BG:book} are not yet developed in the
infinite dimensional setting.

%
\subsection{\texorpdfstring{$L^2$}--bounds}
%
Here, we follow the proof of \cite{acfu} (p. 296) with some
significant changes related to the additive noise.

We wish to show that some small tubular neighborhood of our
manifold is positively invariant. \textit{Obviously, in the
stochastic setting any solution will leave the neighborhood at
some point. The question is, how long does this take}. Here, we
are going to present a relatively simple and direct proof for the
bound on the exit time.

Recall that
\begin{equation*}
w(t)=u(\cdot,\xi(t),\eps)+v(t)\;,
\end{equation*}
where
$v(t)\bot_{L^2(\Omega_\delta)} \partial_\xi u$. Relation
\eqref{4**} gives
\begin{equation}\label{4}
\begin{split}
dw
=\Big{[}\mathcal{L}^\eps(u)-Lv-N(u,v)\Big{]}dt+dW,
\end{split}
\end{equation}
and on the other hand
\begin{equation}\label{4b}
 dw=du+dv=\partial_\xi ud\xi+\frac{1}{2}\partial_\xi^2 u(Q\sigma,\sigma)dt + dv\;.
\end{equation}
Recall the definitions
\begin{equation*}
\begin{split}
\mathcal{L}^\eps(u):=&\eps^2\Delta
u-f(u)+\frac{1}{|\Omega_\delta|}\int_{\Omega_\delta}f(u)dx,
\qquad\qquad
Lv:=-\eps^2\Delta v+f^{\prime}(u)v,\\
N(u,v):=&f(u+v)-f(u)-f^{\prime}(u)v-\frac{1}{|\Omega_\delta|}\int_{\Omega_\delta}[f(u+v)-f(u)]dx.
\end{split}
\end{equation*}
Recall also from Lemma \ref{lem:defu} that (for any large $K>k$)
\begin{equation}\label{jul1}
\mathcal{L}^\eps(u)=c(\xi,\eps)\eps^2\partial_\xi u+\mathcal{B},
\quad \text{with }\mathcal{B}=\mathcal{O}_{L^\infty}(\eps^K).
\end{equation}
Solving \eqref{4}  and \eqref{4b} for $dv$ and substituting
\eqref{jul1}, we obtain the equation for motion orthogonal to the manifold.
\begin{lemma}
Consider a solution $w(t)=u(\cdot,\xi(t),\eps)+v(t)$ with
$v(t)\bot \partial_\xi u$ and $\xi$ being our diffusion process, then
\begin{equation}\label{j4}
\begin{split}
dv=&\Big{[}c\eps^2\partial_\xi u
+\mathcal{B}
-Lv-N(u,v)\Big{]}dt
-\partial_\xi ud\xi
-\frac{1}{2}\partial_\xi^2 u(\mathcal{Q}\sigma,\sigma)dt
+dW.
\end{split}
\end{equation}
\end{lemma}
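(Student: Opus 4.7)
The plan is to write down the two available expressions for $dw$ provided by \eqref{4} and \eqref{4b}, equate them, and simply solve the resulting algebraic identity for $dv$. The content of the lemma is essentially bookkeeping: it is the statement that the decomposition $w=u+v$ is consistent when $u$ evolves via It\^o's formula applied to $u(\cdot,\xi(t),\eps)$ with $\xi$ a diffusion.

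First, I will recall that \eqref{4} follows from rewriting \eqref{equ} via the operator $\mathcal{L}^\eps$ together with its linearization $L$ and remainder $N(u,v)$ around the droplet $u$; this has already been recorded earlier in the section. Second, I will expand $du$ by It\^o's formula, using that $\xi$ is the diffusion \eqref{xi}; the only nontrivial input here is the quadratic variation identity $d\xi\, d\xi = (\mathcal{Q}\sigma,\sigma)dt$ from Lemma \ref{lem:help1}, which produces the $\tfrac12\partial_\xi^2 u (\mathcal{Q}\sigma,\sigma)dt$ contribution in \eqref{4b}. Since $dtdt=dWdt=0$ and all higher differentials vanish, no further correction terms appear.

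Equating \eqref{4} and \eqref{4b} and isolating $dv$ then yields
\[
dv=\bigl[\mathcal{L}^\eps(u)-Lv-N(u,v)\bigr]dt-\partial_\xi u\,d\xi-\tfrac12\partial_\xi^2 u\,(\mathcal{Q}\sigma,\sigma)dt+dW.
\]
To finish, I substitute the identity \eqref{jul1} from Lemma \ref{lem:defu}, namely $\mathcal{L}^\eps(u)=c(\xi,\eps)\eps^2\partial_\xi u+\mathcal{B}$ with $\mathcal{B}=\mathcal{O}_{L^\infty}(\eps^K)$, to obtain the stated formula \eqref{j4}.

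There is essentially no obstacle here; the only point demanding care is ensuring that the It\^o correction $\tfrac12\partial_\xi^2 u\,(\mathcal{Q}\sigma,\sigma)dt$ is not forgotten when expanding $du$, and this is secured by Lemma \ref{lem:help1}. No additional assumptions on $\|v\|$, $\eps$ or the noise are needed at this stage, since the identity is purely a formal consequence of the It\^o decomposition; the size estimates come into play only in later arguments.
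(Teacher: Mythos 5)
Your proof is correct and follows exactly the same route the paper takes: the paper simply states ``Solving \eqref{4} and \eqref{4b} for $dv$ and substituting \eqref{jul1}'' above the lemma, which is precisely your computation, with the It\^o correction coming from Lemma \ref{lem:help1} as you note. You are also right that no smallness assumptions are needed here; the identity is purely formal.
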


Let us now turn to the estimate of $\|v\|^2$.  First we obtain since $(\partial_\xi u,v)=0$
\begin{equation}\label{j5}
\begin{split}
(dv,v)
&=\Big{[}(\mathcal{B},v)-(Lv,v)-(N(u,v),v)\Big{]}dt-\frac{1}{2}(\partial_\xi^2
(u),v)(\mathcal{Q}\sigma,\sigma)dt+(v,dW).
\end{split}
\end{equation}

In view of \eqref{j5} we first observe that It$\rm{\hat{o}}$
calculus gives
$$
d\|v\|^2=d(v,v)=2(v,dv)+(dv,dv).
$$
Since
$d\xi=bdt+(\sigma,dW)$ and
$$
(dW,dW) = \text{trace}(Q) dt = \eta_0dt,
$$
again by It\^o-calculus we derive
\begin{equation*}\label{477}
\begin{split}
(dv,dv)=& (dW,dW) - 2(dW,\partial_\xi u)d\xi+(\partial_\xi u,\partial_\xi u)d\xi d\xi\\
=&\eta_0dt-2(\partial_\xi
(u),\mathcal{Q}\sigma)dt+\|\partial_\xi u\|^2(\sigma,\mathcal{Q}\sigma)dt \\
=& \mathcal{O}(\eta_0) dt,
\end{split}
\end{equation*}
where we used in the last step that $\|\partial_\xi u\|\leq C\eps^{-1/2}$,
$\|\mathcal{Q}\|=\eta_1\leq \eta_0$ and $\|\sigma\|\leq
C\eps^{1/2}$.

Finally, we have
\begin{equation}\label{j6}
 \frac{1}{2}d\|v\|^2 = (v,dv)+ \mathcal{O}(\eta_0) dt.
\end{equation}
In order to proceed, we now bound the terms in \eqref{j5}.
Obviously, we have
\begin{equation}\label{j7}
(\mathcal{B},v)dt\leq C\eps^K\|v\|dt.
\end{equation}
Furthermore, for the quadratic form of the linearized operator
the following lemma based on \cite{acfu} holds true.
\begin{lemma}
 There exists some $\nu_0>0$ such that for all $v\perp \partial_\xi u$
\begin{equation}\label{j8}
-(Lv,v)\leq -\nu_0\eps^2\|v\|_{H_\eps^1}^2.
\end{equation}
\end{lemma}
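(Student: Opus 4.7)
The plan is to reduce the claim to the $L^2$-spectral gap for the linearized operator $L=-\eps^2\Delta+f'(u)$ established in \cite{acfu}, and then to upgrade this $L^2$-coercivity to the full $H^1_\eps$-coercivity by keeping a small fraction of the gradient term from $(Lv,v)$.

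First I would record that, by differentiating $\mathcal{L}^\eps(u)=\eps^2 c(\xi,\eps)\,\partial_\xi u+\mathcal{O}_{L^\infty}(\eps^K)$ in $\xi$, the function $\partial_\xi u$ is an approximate null direction of $L$, i.e.\ $L\,\partial_\xi u$ is polynomially small in $\eps$ (after accounting for the harmless non-local mean). The droplet spectral analysis of Alikakos--Chen--Fusco \cite{acfu} then gives a principal eigenvalue of $L$ of the same polynomially small order with eigenfunction $e_1$ satisfying $e_1=\partial_\xi u/\|\partial_\xi u\|+\mathcal{O}(\eps)$ in $L^2$, and a spectral gap
\[
(Lv,v)\;\geq\;\nu\,\eps^2\,\|v\|^2\qquad\text{for all } v\perp\partial_\xi u,
\]
for some $\nu>0$ independent of $\eps$. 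Since the orthogonality here is to $\partial_\xi u$ rather than to the exact $e_1$, one picks up an $\mathcal{O}(\eps^K)$ defect from the projection onto $e_1$; this is harmlessly absorbed into $\nu$.

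Next I would upgrade to the $H^1_\eps$-norm by a convex combination. For $\alpha\in(0,1)$,
\[
(Lv,v)\;=\;\alpha(Lv,v)+(1-\alpha)(Lv,v)\;\geq\;\alpha\bigl[\eps^2\|\nabla v\|^2-\|f'(u)\|_{L^\infty}\|v\|^2\bigr]+(1-\alpha)\,\nu\,\eps^2\|v\|^2,
\]
where I have used $(f'(u)v,v)\geq -\|f'(u)\|_{L^\infty}\|v\|^2$ in the first piece and the spectral gap on the second. Since $f$ is smooth and $u$ is uniformly bounded, $\|f'(u)\|_{L^\infty}\leq C$ independently of $\eps$. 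Choosing $\alpha=\nu_1\eps^2$ with $\nu_1$ small enough, the $\|v\|^2$ coefficient remains bounded below by $(\nu-C\nu_1)\eps^2$, yielding
\[
(Lv,v)\;\geq\;\nu_1\,\eps^4\|\nabla v\|^2+\tfrac{\nu}{2}\,\eps^2\|v\|^2\;\geq\;\nu_0\bigl(\eps^2\|\nabla v\|^2\cdot\eps^2+\|v\|^2\cdot\eps^2\bigr)\;=\;\nu_0\,\eps^2\,\|v\|_{H^1_\eps}^2
\]
for $\eps$ sufficiently small and $\nu_0:=\min\{\nu_1,\nu/2\}$, which rearranges to the stated inequality.

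The main obstacle is the spectral gap invoked at the start: it is not formal but requires the careful analysis of \cite{acfu}, balancing the negative contribution of $f'(u)$ in the $\mathcal{O}(\eps)$-wide transition layer against the gradient term, and verifying that the droplet configuration produces only one small eigenvalue of order $\eps^K$ along $\partial_\xi u$ with the rest of the spectrum separated by $\nu\eps^2$. Once this input is accepted, the remaining steps are elementary as outlined above.
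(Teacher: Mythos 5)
Your proof is correct and follows essentially the same route as the paper: invoke the $L^2$-spectral gap $(Lv,v)\geq\tilde\nu\eps^2\|v\|^2$ for $v\perp\partial_\xi u$ from \cite{acfu}, then split $(Lv,v)$ into a convex combination, keeping a small $\mathcal{O}(\eps^2)$ fraction of the raw Dirichlet form to generate the $\eps^4\|\nabla v\|^2$ contribution and using the gap on the remainder. You are in fact slightly more careful than the paper, whose ``choosing $\gamma=\eps^2$'' should really read $\gamma=c\eps^2$ with $c$ small enough that $c\|f'(u)\|_\infty<\tilde\nu$, exactly as you arrange with $\alpha=\nu_1\eps^2$.
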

Let us remark that any improvement in the spectral gap immediately yields an improvement in the
noise-strength we can study.
\begin{proof}
From the main spectral theorem of \cite{acfu} we have
$$-(Lv,v)\leq -\tilde\nu\eps^2\|v\|^2.$$ As $u$ is bounded, this
implies that for any $\gamma\in(0,1)$
\[-(Lv,v)\leq
-\gamma  \eps^2\|\nabla v \|^2 + \gamma \| f'(u) \|_\infty \|v\|^2 -(1-\gamma)\tilde\nu\eps^2\|v\|^2.
\]
Choosing $\gamma=\eps^2$ yields the claim.
\end{proof}
We consider  finally the term $-(N(u,v),v)dt$.
Since $u$ is uniformly bounded, we obtain
\begin{equation*}
\begin{split}
(N(u,v),v)=\int_{\Omega_\delta}(3uv^3+v^4)dx-\frac{1}{|\Omega_\delta|}\int_{\Omega_\delta}
(3u^2v+3uv^2+v^3)dx\int_{\Omega_\delta} vdx\geq
-C\int_{\Omega_\delta}|v|^3dx+\int_{\Omega_\delta}v^4dx,
\end{split}
\end{equation*}
where we used that the mass conservation
together with the definition of $u$ gives
$$\frac{1}{|\Omega_\delta|}\int_{\Omega_\delta}wdx=\frac{1}{|\Omega_\delta|}\int_{\Omega_\delta}udx,
\qquad\text{and therefore}\qquad
\int_{\Omega_\delta}v dx=0.
$$
Hence, we arrive at
\begin{equation}\label{j9}
-(N(u,v),v)dt\leq
\Big{(}C\int_{\Omega_\delta}|v|^3dx-\int_{\Omega_\delta}v^4dx\Big{)}dt.
\end{equation}
Also we have
\begin{equation}\label{j10}
 (\partial_\xi^2 u,v)(\mathcal{Q}\sigma,\sigma)dt\leq
 C\eps^{-3/2}\eta_1\eps^{1/2}\eps^{1/2}\|v\|dt=C\eps^{-1/2}\eta_1\|v\| dt.
 \end{equation}
Using now the relations \eqref{j6}, \eqref{j7}, \eqref{j8},
\eqref{j9} and \eqref{j10} in \eqref{j5} we get
\begin{equation}\label{j10*}
\begin{split}
d\|v\|^2\leq
&\Big{[}C\eta_0+C\eps^K\|v\|-2\nu_0\eps^2\|v\|_{H_\eps^1}^2
+C\int_{\Omega_\delta}|v|^3dx-2\int_{\Omega_\delta}v^4dx+C\eps^{-1/2}\eta_1\|v\|\Big{]}dt +2(v,dW).
\end{split}
\end{equation}
By Nirenberg's inequality and since $\eps \ll 1$, it follows that
\begin{equation}\label{j10**}
\begin{split}
C\int_{\Omega_\delta}|v|^3dx=&C\|v\|_{L^3}^3
\leq
\tilde{C}\|v\|_{H_1}\|v\|^{2}
\\
\leq &C_2\frac{1}{\eps}\|v\|_{H_\eps^1}\|v\|^2
\leq
\|v\|_{H_\eps^1}\|v\|\nu_0\eps^2
\leq \nu_0\eps^2\|v\|_{H_\eps^1}^2,
\end{split}
\end{equation}
provided that 
for $\hat{C}\geq C_2$
\begin{equation}\label{loc*}
\|v\|\leq \frac{\nu_0 \eps^3}{\hat{C}}.
\end{equation}
Note that this is the point where the condition $k>5$ will
finally appear, since if $\|v\|\leq C\eps^{k-2}$ for $k>5$ then
indeed $\|v\|\leq \frac{\nu_0 \eps^3}{\hat{C}}$ (observe that we
do not have any sharp estimate of the constant $\nu_0$ even if
$\hat{C}$ could be determined explicitly by the Nirenberg's inequality
constant).

Using \eqref{j10**} in \eqref{j10*} and the fact that
$-\|v\|_{H_\eps^1}^2\leq -\|v\|^2$ we arrive at
\begin{equation*}
\begin{split}
d\|v\|^2 \leq
&\Big{[}C\eta_0+C\eps^K\|v\|-\nu_0\eps^2\|v\|^2_{H^1_\eps}+C\eps^{-1/2}\eta_1\|v\|\Big{]}dt+2(v,dW)\\
\leq
&\Big{[}C\eta_0+\frac{C_0^2}{2\nu_0\eps^2}(\eps^K+\eps^{-1/2}\eta_1)^2-\frac{\nu_0\eps^2}{2}\|v\|^2\Big{]}dt+2(v,dW).
\end{split}
\end{equation*}
Let us summarize the result proven so far.
\begin{lemma}
As long as $ \|v\|\leq \frac{\nu_0 \eps^3}{\hat{C}},$ then
\begin{equation}\label{j11}
\begin{split}
d\|v\|^2\leq
&\Big{[}C_\eps-\frac{\nu_0\eps^2}{2}\|v\|^2\Big{]}dt+2(dW,v),
\end{split}
\end{equation}
for
$$C_\eps:=C\eta_0+\frac{C_0^2}{2\nu_0\eps^2}(\eps^K+\eps^{-1/2}\eta_1)^2,$$
where $C_0$, $\hat{C}$ and $C$ are the specific constants appearing
in the proof.
\end{lemma}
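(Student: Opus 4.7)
The plan is to assemble the ingredients already prepared in the excerpt into a single differential inequality for $\|v\|^2$. First, by It\^o's formula, $d\|v\|^2 = 2(v,dv)+(dv,dv)$. The quadratic variation $(dv,dv)$ has already been computed and bounded by $\mathcal{O}(\eta_0)\,dt$. The leading contribution therefore comes from $(v,dv)$, for which \eqref{j5} gives an expansion into four pieces: the remainder $(\mathcal{B},v)$, the linearized term $-(Lv,v)$, the nonlinearity $-(N(u,v),v)$, and the curvature-type term $-\tfrac12(\partial_\xi^2 u,v)(\mathcal{Q}\sigma,\sigma)$, plus the martingale increment $(v,dW)$.

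Next I would substitute in the four estimates already established: \eqref{j7} bounds the remainder by $C\eps^K\|v\|$; the spectral estimate \eqref{j8} controls the linearized operator by $-\nu_0\eps^2\|v\|_{H_\eps^1}^2$; the expansion \eqref{j9} gives $-(N(u,v),v)\leq C\int|v|^3\,dx-\int v^4\,dx$; and \eqref{j10} controls the cross term by $C\eps^{-1/2}\eta_1\|v\|$. Combining these with the $(dv,dv)$ estimate and multiplying by $2$ yields the intermediate bound
\begin{equation*}
d\|v\|^2\leq\Bigl[C\eta_0+C\eps^K\|v\|-2\nu_0\eps^2\|v\|_{H_\eps^1}^2+C\!\!\int_{\Omega_\delta}\!|v|^3\,dx-2\!\!\int_{\Omega_\delta}\!v^4\,dx+C\eps^{-1/2}\eta_1\|v\|\Bigr]\,dt+2(v,dW).
\end{equation*}

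The key step is to dispose of the cubic term via the Nirenberg inequality \eqref{j10**}. Under the smallness assumption $\|v\|\leq \nu_0\eps^3/\hat C$ one gets $C\int|v|^3\,dx\leq \nu_0\eps^2\|v\|_{H_\eps^1}^2$, which is exactly half of the good dissipative term; this is the place where the constraint $k>5$ will eventually enter when one plugs in $\|v\|\leq C\eps^{k-2}$. Discarding the nonpositive quartic term and using $\|v\|_{H_\eps^1}^2\geq\|v\|^2$ leaves
\begin{equation*}
d\|v\|^2\leq\Bigl[C\eta_0+C(\eps^K+\eps^{-1/2}\eta_1)\|v\|-\nu_0\eps^2\|v\|^2\Bigr]\,dt+2(v,dW).
\end{equation*}

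The last step is a weighted Young inequality applied to the linear-in-$\|v\|$ terms: with $a=C(\eps^K+\eps^{-1/2}\eta_1)$ and $b=\|v\|$, one writes $ab\leq \tfrac{a^2}{\nu_0\eps^2}+\tfrac{\nu_0\eps^2}{4}b^2$, so that the linear term is absorbed into $\tfrac{\nu_0\eps^2}{2}\|v\|^2$ at the cost of the deterministic constant $\tfrac{C_0^2}{2\nu_0\eps^2}(\eps^K+\eps^{-1/2}\eta_1)^2$. Collecting everything into $C_\eps$ yields \eqref{j11}. The only delicate point in the argument is the Nirenberg absorption of the cubic nonlinearity, because it requires the localization hypothesis on $\|v\|$ and is what ultimately forces the lower bound on the exponent $k$; the rest of the derivation is mechanical bookkeeping of the estimates already collected above.
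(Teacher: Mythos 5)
Your proof is correct and follows essentially the same route as the paper: It\^o's formula, substitution of the estimates \eqref{j7}--\eqref{j10} into \eqref{j5}, absorption of the cubic term via the Nirenberg inequality \eqref{j10**} under the localization $\|v\|\le\nu_0\eps^3/\hat C$, discarding the quartic term, using $-\|v\|_{H^1_\eps}^2\le-\|v\|^2$, and finally Young's inequality to trade the linear-in-$\|v\|$ terms for the constant $C_\eps$ while keeping half the dissipation. The only negligible difference is your exact choice of the Young weight, which changes the unimportant constants but not the conclusion.
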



\subsection{Long-time \texorpdfstring{$L^2$}--stability}

Let us define the stopping time $\tau^\star$ as the exit-time of a
neighborhood of the manifold before time $T$
$$
\tau^\star:= \inf\{t\in[0,T]\ :\ \|v(t)\|>B \},
$$
with the convention that $$\tau^\star=T,\quad\text{ if } \quad
\|v(t)\|\leq B \text{ for all }t\in[0,T],$$ and thus, the solution
did not exit before $T$.

Recall that $v$ satisfies
an inequality of the form
$$
d\|v\|^2 \leq [C_\eps-a\|v\|^2] dt + 2 (v,dW),
$$
 for all $t\leq \tau^\star$, provided that $B \leq \frac{\nu_0 \eps^3}{\hat{C}}$.
 Thus from now on, we fix
 \[B=C\eps^{k-2} \quad\text{for }k>5\;.
 \]
 More specifically,  from \eqref{j11}
\begin{equation*}
a:=\frac{\nu_0\eps^2}{2}
\quad\text{and}\quad
C_\eps:=C\eta_0+\frac{C_0^2}{2\nu_0\eps^2}(\eps^K+\eps^{-1/2}\eta_1)^2
\end{equation*}
So, for all $t\leq \tau^\star$, it holds that
$$
\|v(t)\|^2 + a \int_0^t\|v(s)\|^2 ds  \leq  \|v(0)\|^2 + C_\eps t + 2
\int_0^t (v,dW ).
$$
Using the fact that stopped stochastic integrals still have mean
value $0$ (referring to optimal stopping of martingales), we
obtain
\begin{equation}
\label{e:key1}
 \mathbb{E}\|v(\tau^\star)\|^2
+ a \mathbb{E}\int_0^{\tau^\star}\|v(s)\|^2 ds  \leq  \|v(0)\|^2
+  C_\eps T,
\end{equation}
where we used that $\tau^\star\leq T$ by definition.

We can extend (\ref{e:key1}) above  to higher powers using It\^o
calculus as follows:
\begin{eqnarray*}
 d\|v\|^{2p}
 &=& p \|v\|^{2p-2} d\|v\|^2 + p(p-1) \|v\|^{2p-4} d\|v\|^2 d\|v\|^2 \\
 &=& p \|v\|^{2p-2} d\|v\|^2 + 4p(p-1) \|v\|^{2p-4} ( v, Qv ) dt\\
  &\leq& p \|v\|^{2p-2} [C_\eps-a\|v\|^2] dt  + 4p(p-1) \|v\|^{2p-2} \|Q\| dt + 2p \|v\|^{2p-2} (v,dW
  ).
\end{eqnarray*}
Hence, for all integers $p>1$, we arrive at
\[
\|v(t)\|^{2p} + p a \int_0^t\|v(s)\|^{2p} ds
\leq  \|v(0)\|^{2p} + C (C_\eps+ \|Q\|)\int_0^t \|v\|^{2p-2} dt +
2p  \int_0^t \|v\|^{2p-2} ( v,dW ),
\]
 provided $t\leq \tau^\star$.
Here and in the sequel, we denote all constants depending
explicitly on $p$ only by $C$.

Therefore, we obtain
\begin{equation}
 \label{e:key2}
 \mathbb{E}\|v(\tau^\star)\|^{2p}
\leq  \|v(0)\|^{2p} + C
(C_\eps+\|Q\|)\mathbb{E}\int_0^{\tau^\star}\|v\|^{2p-2} dt,
\end{equation}
and
\begin{equation}
 \label{e:key3}
 a \mathbb{E}\int_0^{\tau^\star} \|v(s)\|^{2p} ds
\leq  \frac1p\|v(0)\|^{2p} + C
(C_\eps+\|Q\|)\mathbb{E}\int_0^{\tau^\star}\|v\|^{2p-2} dt.
\end{equation}

Let us now assume that
$$
q= \frac{C_\eps+\|Q\| }{a} \ll 1 \qquad \text{and} \qquad \|v(0)\|^2
\leq  q \ll B^2.
$$

An induction argument yields
\begin{align*}
\tfrac1p\mathbb{E} \|v(\tau^\star)\|^{2p}
 &\stackrel{(\ref{e:key2})}{\leq}
 \tfrac1p  \|v(0)\|^{2p}
+ C (C_\eps+\|Q\|)\mathbb{E}\int_0^{\tau^\star}\|v\|^{2p-2} dt \\
 &= \tfrac1p  \|v(0)\|^{2p}
+    C \cdot  qa \cdot\mathbb{E}\int_0^{\tau^\star}\|v\|^{2p-2} dt\\
 &\stackrel{(\ref{e:key3})}{\leq}
 \tfrac1p   \|v(0)\|^{2p} +  C q \|v(0)\|^{2p-2} +   C q^2 a \mathbb{E}\int_0^{\tau^\star}\|v\|^{2p-4} dt\\
&\leq   C q \|v(0)\|^{2p-2} +  C q^2 a \mathbb{E}\int_0^{\tau^\star}\|v\|^{2p-4} dt\\
&\leq \ldots \\
& \leq   C q^{p-2} \|v(0)\|^{4} +  C q^{p-1} a \mathbb{E}\int_0^{\tau^\star}\|v\|^{2} dt \\
 &\stackrel{(\ref{e:key1})}{\leq}  C q^{p-2} \|v(0)\|^{4} +  C q^{p-1} [ \|v(0)\|^{2} +C_\eps T]\\
& \leq   C q^p   + C a q^p T,
\end{align*}
as $C_\eps \leq aq$.
Using Chebychev's inequality, finally, we arrive at
\begin{equation}\label{newche}
\begin{split}
 \mathbb{P} (\tau^\star < T )
 & =   \mathbb{P} (\|v(\tau^\star)\| \geq B )
  \leq B^{-2p} \mathbb{E} \|v(\tau^\star)\|^{2p} \\
  & \leq C B^{-2p}[ q^p   + a  q^p T]
  = C \Big(\frac{q}{B^2}\Big)^p   + C a   \Big(\frac{q}{B^2}\Big)^p
  T.
  \end{split}
\end{equation}
Therefore, we obtain the following $L^2$-stability theorem.

\begin{theorem}\label{newl2thm}
Consider the exit time
$$
\tau^\star:= \inf\{t\in[0,T_\eps]\ :\ \|v(t)\|>C\eps^{k-2}  \},
$$
with $T_\eps:=\varepsilon^{-N}$ for any fixed large $N>0$. Fix
$$k> 5,\;\;\;\;\mbox{and}\;\;\;\;\|v(0)\|< \eta\eps^{k-2}.
$$
Also, assume that the noise satisfies
$$\eta_0\leq C\eps^{2k-2+\tilde{k}},
$$ for
some $\tilde{k}>0$ very small. Then the probability $\mathbb{P}
(\tau^\star < T_\eps )$ is smaller than any power of
$\varepsilon$, as $\varepsilon \to 0$. And thus for very large
times with high probability the solution stays close to the
manifold.
\end{theorem}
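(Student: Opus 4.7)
The plan is to promote the $L^2$-moment inequality (\ref{j11}) to higher even moments via It\^o calculus, and then to close the argument with Chebychev's inequality, with $p$ chosen large in terms of $N$. Essentially all of this has been prepared in the excerpt up through the Chebychev estimate (\ref{newche}); the remaining tasks are to verify that $q/B^2$ is a positive power of $\eps$ under the theorem's hypotheses and to tune $p$ accordingly.

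First, I would check that for $k > 5$ the radius $B := C\eps^{k-2}$ satisfies $B \ll \nu_0\eps^3/\hat C$ for $\eps$ small, so that the local a priori bound (\ref{j11}) remains valid throughout $[0,\tau^*]$. Under the noise hypothesis $\eta_0 \leq C\eps^{2k-2+\tilde k}$, together with $\eta_1 \leq \eta_0$, and choosing $K > k$ in (\ref{jul1}) so that the $\eps^K$-term in $C_\eps$ is subdominant to $\eps^{-1/2}\eta_1$, one obtains $C_\eps + \|Q\| \leq C\eps^{2k-2+\tilde k}$. Dividing by $a = \nu_0\eps^2/2$ then gives
\[
q \leq C\eps^{2k-4+\tilde k}, \qquad \frac{q}{B^2} \leq C\eps^{\tilde k},
\]
which is the positive power of $\eps$ that drives the whole argument.

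Next, I would substitute these scales, together with $T_\eps = \eps^{-N}$ and $a\sim\eps^2$, into (\ref{newche}):
\[
\mathbb{P}(\tau^* < T_\eps) \leq C_p\,(q/B^2)^p(1 + aT_\eps) \leq C_p\,\eps^{p\tilde k}(1 + \eps^{2-N}) \leq C_p\,\eps^{p\tilde k + 2 - N}.
\]
For any prescribed $M > 0$, choosing $p$ with $p\tilde k > N + M$ produces $\mathbb{P}(\tau^* < T_\eps) \leq C_p\eps^M$, which is the required bound.

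The main technical subtlety I expect to negotiate is the induction chain that feeds into (\ref{newche}): the telescoping in the excerpt absorbs each $q^j\|v(0)\|^{2(p-j)}$ into $q^p$ via the implicit step $\|v(0)\|^2 \leq q$, whereas the theorem's hypothesis only provides $\|v(0)\|^2 \leq \eta^2\eps^{2k-4}$, which need not be $\leq q \leq C\eps^{2k-4+\tilde k}$ for constant $\eta$. I would therefore retain the initial-condition contributions as an explicit sum and note that, after division by $B^{2p}$, every term $(q/B^2)^j(\|v(0)\|^2/B^2)^{p-j}$ is bounded by $C\eps^{j\tilde k}(\eta/C)^{2(p-j)}\leq C\eps^{\tilde k}$ for $j \geq 1$, whereas the isolated $j = 0$ term $(\eta/C)^{2p}$ is tamed by taking the outer-radius constant $C$ strictly greater than $\eta$ (as permitted by the formulation of $\tau^*$) and $p$ correspondingly large, so that $(\eta/C)^{2p} \leq \eps^M$ as well.
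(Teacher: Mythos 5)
Your proposal follows the paper's proof step for step: plug the hypotheses into the Chebychev bound (\ref{newche}), verify $q/B^2 = \mathcal{O}(\eps^{\tilde k})$ from $\eta_0 \leq C\eps^{2k-2+\tilde k}$ and $\eta_1\leq\eta_0$ with $K>k$ chosen large, and then take $p$ large enough that $p\tilde k$ exceeds $N$ plus the desired power. This is exactly the paper's argument, including the check $B = C\eps^{k-2}\ll\nu_0\eps^3/\hat C$ for $k>5$, and you have reproduced it correctly.

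Where you go beyond the paper is in noticing that the induction chain leading to (\ref{newche}) absorbs $\|v(0)\|^{2p}$ into $q^p$ via the implicit assumption $\|v(0)\|^2\leq q$, stated right before the induction but not matched by the theorem's hypothesis $\|v(0)\|<\eta\eps^{k-2}$ when $\eta$ is a fixed constant and $q\lesssim\eps^{2k-4+\tilde k}$. This is a genuine observation; the paper's own proof simply invokes (\ref{newche}) without addressing the mismatch. However, your proposed repair does not close the gap. After retaining the full sum, the $j=0$ term after dividing by $B^{2p}$ is $(\|v(0)\|/B)^{2p}=(\eta/C)^{2p}$, which is a constant in $\eps$. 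For fixed $p$, $\eta$, $C$ it cannot be bounded by $\eps^M$ as $\eps\to 0$, no matter how large $C/\eta$ is; and the alternative of letting $p$ grow like $\log(1/\eps)$ is not licensed by the moment estimates, whose constants depend on $p$ (polynomially from the It\^o correction and combinatorially from the length of the induction) in a way that is not tracked. A correct repair would either strengthen the hypothesis on the initial data to $\|v(0)\|^2\leq q$, i.e., $\|v(0)\|\lesssim\eps^{k-2+\tilde k/2}$ (a half power of $\tilde k$ below the tube radius), or replace the uniform-in-time moment induction with an argument that exploits the exponential decay of $v$ to the equilibrium scale over an initial transient of length $\mathcal{O}(a^{-1})=\mathcal{O}(\eps^{-2})$ before applying the long-time moment bound. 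As written, both your sketch and the paper's proof rest on the unstated assumption $\|v(0)\|^2\leq q$.
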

\begin{proof}
The claim follows from inequality \eqref{newche} if $q/B^2 = \mathcal{O}(\varepsilon^{\tilde{\kappa}})$.

Indeed, using  the
definitions of $C_\eps$, $a=\mathcal{O}(\eps^2)$ and $B=C\eps^{k-2}$, we have
(note $K>k$)
$$q= \frac{C_\eps+\|Q\| }{a}\leq
C [\eta_0\eps^{-2}+\eps^{-4}(\eps^{2K}+\eps^{-1}\eta_1^2)]$$
 Furthermore,
$B^2=\mathcal{O}(\eps^{2k-4})$. So, indeed we get
$$q/B^2 =  \eta_0\eps^{2-2k}+\eps^{2(K-k)}+\eps^{-1-2k}\eta_1^2   =  \mathcal{O}(
\varepsilon^{\tilde{\kappa}}),
$$
since $\eta_1\leq\eta_0$ and $k>5$.
\end{proof}

\begin{remark}
The stability result presented so far does not state that
the local in time stochastic dynamics for $\xi$ given by Theorem
\ref{stdyn} and Theorem \ref{neffect} hold with high probability
for a long time.
For this we need to prove stochastic stability in the
$H_\eps^1$-norm. As we rely for simplicity of presentation on the
direct application of It\^o's formula, this will only
be achieved for a noise $\dot{W}$ sufficiently regular
in space.
\end{remark}

\begin{remark}
The presence of the very small $\tilde\kappa$ in the conditions of
$\eta_0$ is only for simplicity. Instead of having the
$\tilde\kappa$ here, we could use $B:=\eps^{2k-4-\tilde\kappa}$.
This would yield the same result but for a slightly smaller
neighborhood for $v$.

We need the gap created by $\tilde\kappa$ in order to control the
probability and to obtain very large time-scales in the stability
result. This is the reason, why $k=5$ (included in the
deterministic case result of \cite{acfu}) is out of reach in our
approach, and we can only consider $k>5$.
\end{remark}


\subsection{Estimates in \texorpdfstring{$H_\varepsilon^1$}--norm}
\label{sec4new}

As we can not rely on bounds of the linearized operator in
$H^1_\eps$-norm, we shall use instead the previously established
$L^2$-stability result given by Theorem \ref{newl2thm}. Nevertheless,
in order to bound the $H^1_\eps$-norm of stochastic
solution over a very long time-scale, we can allow the use of a
larger tube bounding $\nabla v$. But, as we shall see in the sequel,
this will further limit the noise  we can
consider since we obtain additional restrictions for the size of
$\eta_2$.

Let $w$ be the solution of the mass conserving stochastic
Allen-Cahn equation \eqref{equ}, then
$$
w=u+v,\;\;u\in\mathcal{M},\;\;u\bot_{L^2(\Omega_\delta)}\partial_\xi u.
$$
Moreover, recall (\ref{j4})
\begin{equation*}
\begin{split}
dv=&\Big{[}c\eps^2\partial_\xi u+\mathcal{B}-Lv-N(u,v)\Big{]}dt
-\partial_\xi ud\xi-\frac{1}{2}\partial_\xi^2 u(\mathcal{Q}\sigma,\sigma)dt+dW,
\end{split}
\end{equation*}
with
\[
Lv+N(u,v) =-\eps^2\Delta v + f(u+v)-f(u)-\frac{1}{|\Omega_\delta|}\int_{\Omega_\delta}[f(u+v)-f(u)]dx.
\]
We consider first the following relation
\begin{equation}
\label{e:itostart}
\begin{split}
 d\|\nabla v\|^2
 &=  2(\nabla v, d \nabla v) + (\nabla dv, \nabla dv)\\
  &= - 2(\Delta v, dv) + (\nabla dv, \nabla dv),
\end{split}
\end{equation}
where we used integration by parts, as $v$ satisfies a Neumann
boundary condition.

Observe that by series expansion of $W$ and since $\|e_k\|=1$,
$\|\sigma\|=\mathcal{O}(\eps^{1/2})$, $\|\nabla\partial_\xi
u\|=\mathcal{O}(\eps^{-3/2})$, we obtain
\begin{equation*}
\begin{split}
(\nabla\partial_\xi u, \nabla dW)(\sigma,dW) &= \sum_k \alpha_k^2
(\nabla\partial_\xi u, \nabla e_k)(\sigma,e_k) dt\\
&\leq 
\sum_k \alpha_k \|\nabla e_k\| \alpha_k
\|\nabla\partial_\xi u\| \|\sigma\|dt \\
&\leq  \eta_2^{1/2} \eta_0^{1/2} \|\nabla\partial_\xi u\|
\|\sigma\|dt\\
&\leq C(\eps^{-2}\eta_0 + \eta_2) dt.
\end{split}
\end{equation*}
Thus, since $\eta_1\leq \eta_0$ then for the It\^o-correction term
we have
\begin{equation*}
 \begin{split}
 (\nabla dv, \nabla dv)
 &= \|\nabla\partial_\xi u\|^2 (d\xi)^2
 -2 (\nabla \partial_\xi u,  \nabla dW)  d\xi
 + (\nabla dW, \nabla dW) \\
  &= \|\nabla\partial_\xi u\|^2 (Q\sigma,\sigma) dt  -2 (\nabla \partial_\xi u,  \nabla dW)(\sigma,dW) + \text{trace}(\Delta Q)\\
  &= \mathcal{O}(\eps^{-2}\eta_1 + \eta_2+\eps^{-2}\eta_0 + \eta_2) dt\\
  &=\mathcal{O}(\eps^{-2}\eta_0 + \eta_2) dt.
\end{split}
\end{equation*}
Considering the other mixed term in \eqref{e:itostart} and using
that $(1,\Delta v)=0$, we obtain the  following relation
\begin{equation}\label{nnn1}
 \begin{split}
-(\Delta v, dv) = & -\Big( \Delta v, \mathcal{B}+\eps^2\Delta v \Big) dt
- \Big(  \Delta v, - f(u+v)+f(u)\Big)    dt \\
&+ \Big(\Delta v, \partial_\xi u [b- c\eps^2] +\frac{1}{2}\partial_\xi^2 u(\mathcal{Q}\sigma,\sigma)\Big) dt
- \Big(\Delta v, -\partial_\xi u (\sigma,dW) + dW\Big) \\
=&T_1+T_2+T_3+T_4\;.
\end{split}
\end{equation}

First, we estimate the martingale term $T_4$. Note that we only integrate by parts once,
as we only know that $v$ satisfies Neumann boundary conditions.
\[
 \begin{split}
T_4 &= (\Delta v, \partial_\xi u) (\sigma,dW)
- (\Delta v,dW) 
= -( \nabla v, \nabla\partial_\xi u) (\sigma,dW)
+ (\nabla v,\nabla dW)\\
& = (\mathcal{O} (\eps^{-1}\| \nabla v\|), dW) +
(\mathcal{O}(\|\nabla v\|), d\nabla W).
\end{split}
\]
For $T_1$ and for $K>0$ sufficiently large, we obtain by Lemma \ref{lem:defu} that
\[
 \begin{split}
T_1 &=
- ( \Delta v, [\mathcal{B}+\eps^2\Delta v] ) dt 
=
-( \Delta v, \mathcal{B}) dt
-\eps^2 \| \Delta v\|^2 dt \\
& = \mathcal{O}(\eps^{K}\|\Delta v\| ) dt -\eps^2 \| \Delta v\|^2
dt.
\end{split}
\]
For $T_3$ we have
\[ \begin{split}
   T_3&=
\Big(\Delta v, \partial_\xi u [b-c\eps^2] +\frac{1}{2}\partial_\xi^2 u(\mathcal{Q}\sigma,\sigma)\Big) dt
=
(\Delta v, \partial_\xi u)  [b-c\eps^2] dt +\frac{1}{2} ( \partial_\xi^2 u, \Delta v )(\mathcal{Q}\sigma,\sigma) dt\\
& = \mathcal{O} (\|\Delta v\| \eps^{-1/2} [b-c\eps^2]  )  dt + \mathcal{O} ( \eps^{-1/2} \eta_0 \|\Delta v\|)  dt.
   \end{split}
\]
Here, note that we can bound $b$ by Corollary \ref{cor:sde}
only up to a stopping time.

For $T_2$ we derive
\begin{equation*}
 \begin{split}
 T_2&=-\Big( \Delta v,  - f(u+v)+f(u) \Big)dt
 = \Big( \Delta v, -v + 3u^2v+3uv^2+v^3 \Big)dt \\
  &= \Big{(}\|\nabla v\|^2 - \int  \nabla(3u^2v+3uv^2+v^3) \cdot \nabla v dx\Big{)}dt  \\
   &= \Big{(}\|\nabla v\|^2 - \int (3u^2+6u v + 3v^2) |\nabla v|^2 dx - \int (6uv+3v^2)
     (\nabla u \cdot \nabla v) dx\Big{)}dt  \\
&\leq  \Big{(}\|\nabla v\|^2- \int (6uv+3v^2)  (\nabla u \cdot
\nabla v) dx\Big{)}dt.
\end{split}
\end{equation*}
We observe that $\|\nabla u\|_{\infty}\sim \|\partial_\xi
u\|_{\infty}$ (cf.\ Appendix).
So, we have $\|\nabla u\|_{\infty}\leq C\eps^{-1}$ (since
$\|\partial_\xi u\|_{\infty}\leq C\eps^{-1}$, cf.\ \cite{acfu}).
In addition, $u$ is uniformly bounded in $\eps$. Furthermore, the
Nirenberg's inequality gives
$$
\|v^2\|=\|v\|_4^2\leq
(C\|v\|_{H_1}^{1/2}\|v\|^{1/2})^2=C\|v\|_{H_1}\|v\| \leq
C\|v\|^2+C\|\nabla v\|\|v\|,
$$
while the following interpolation inequality holds true
$$
\|\nabla v\|^2=-(\Delta v,v)\leq \|\Delta
v\|\|v\| .
$$
Using the previous estimates we arrive at
\begin{equation}\label{esjull}
\begin{split}
T_2 &\leq\Big{(}\|\nabla v\|^2 +
3\|v^2\|\|\nabla u\|_{\infty}\|\nabla v\|+ 6\|v\| \|u\|_{\infty} \|\nabla u\|_\infty\|\nabla v\|\Big{)}dt\\
&\leq \Big{(}\|\nabla v\|^2+ C\eps^{-1}\|v^2\|\|\nabla v\|
+ C\eps^{-1}\|v\|\|\nabla v\|\Big{)}dt\\
&\leq \Big{(}\|\nabla v\|^2+ C\eps^{-1}\|v\|\|\nabla v\|^2+
C\eps^{-1}\|v\|^2 \|\nabla v\| + C\eps^{-1}\|v\|\|\nabla
v\|\Big{)}dt.
\end{split}
\end{equation}
So, by relation \eqref{nnn1}, we derive
\begin{equation*}
  \begin{split}
   d\|\nabla v\|^2
  =&
\mathcal{O}(\eps^{K}\|\Delta v\| ) dt
-2\eps^2 \| \Delta v\|^2 dt
+ T_2 \\
&+\mathcal{O} (\|\Delta v\| \eps^{-1/2} |b-c\eps^2|  )  dt + \mathcal{O} ( \eps^{-1/2} \eta_0 \|\Delta v\|)  dt
  + \mathcal{O}(\eps^{-2}\eta_0 + \eta_2) dt  \\
&  + (\mathcal{O}(\eps^{-1}\|\nabla v\|), d
W)+\mathcal{O}(\|\nabla v\|,\nabla dW).
 \end{split}
\end{equation*}
for $T_2$ estimated by \eqref{esjull}. Therefore, Young's
inequality yields
\begin{equation}
\label{e:H1:1}
  \begin{split}
   d\|\nabla v\|^2
  =&
-\eps^2 \| \Delta v\|^2 dt
+ T_2 \\
&+\mathcal{O} (\eps^{2K-2}+ \eps^{-3} |b-c\eps^2|^2 + \eps^{-3} \eta_0^2+\eps^{-2}\eta_0 + \eta_2) dt  \\
&  + (\mathcal{O}(\eps^{-1}\|\nabla v\|), d
W)+\mathcal{O}(\|\nabla v\|,\nabla dW).
 \end{split}
\end{equation}
In order to proceed with the estimate of $T_2$ given by
(\ref{esjull}) where the $L^2$-norm of $v$ is also involved, we
shall rely on the $L^2$-stability result proven so far, observing
evolution in time as long as $\|v\|$ is not too large.
\begin{definition}
 Let $k>5$ and $\tilde\kappa>0$ small.  For some given large $T_\eps$, we define
the stopping time
\begin{equation}\label{st5}
\tau_\eps=\inf\{t\in[0,T_\eps] \ : \
\|\nabla v(t)\| > C_0\eps^{k-4-\tilde\kappa}
\text{ or }
\|v(t)\|>C_0\eps^{k-2}\}
\;.
\end{equation}
Here, $C_0$ is a large fixed positive constant. Obviously, we set
$\tau_\eps=T_\eps$ if none of the conditions is satisfied for all
$t< T_\eps$.
\end{definition}

From the previous definition, it follows that
\[
\sup_{t\in[0,T_\eps]}\|v(t)\|_{H^1_\eps} \leq
C_0\eps^{k-3-\tilde\kappa}.
\]
Hence, for this stopping time considered, the bound in $L^2$-norm
provided by Theorem \ref{newl2thm} (i.e.,\ $\|v(t)\|< C\eps^{k-2}$
for all $t\leq \tau_\eps $ with high probability, as long as
$T_\eps$ is a polynomial in $\eps^{-1}$) is much stronger than
the $L^2$-bound given in the $H^1_\eps$-norm.

For the rest of the arguments we assume that $t\leq\tau_\eps$ and let $K \geq k$.
We know by Corollary \ref{cor:sde} that
\[
 \sup_{[0,\tau_\eps]} |b-c\eps^2| = \mathcal{O}(\eps^{k-1/2}).
\]
Thus, relation (\ref{e:H1:1}) yields
\begin{equation}\label{eqn12}
  \begin{split}
   d\|\nabla v\|^2
  =&
-\eps^2 \| \Delta v\|^2 dt
+ T_2 \\
& + \mathcal{O} ( \eps^{2k-4} + \eps^{-3} \eta_0^2+\eps^{-2}\eta_0 + \eta_2) dt  \\
& + (\mathcal{O}(\eps^{-1}\|\nabla v\|), d
W)+(\mathcal{O}(\|\nabla v\|),\nabla dW )\;.
 \end{split}
\end{equation}
In order to estimate the term $T_2$, we use the bound given by
(\ref{esjull}) and the following relations
 $$
 \|v(t)\|\leq C\eps^{k-2},
 \qquad
 \|\nabla v\|^2\leq \|\Delta v\|\|v\|.
 $$
 Therefore, for $k\geq 3$,
we obtain by Young's inequality
\begin{equation}\label{esjul2}
\begin{split}
T_2&\leq \Big{(}\|\nabla v\|^2+ C\eps^{-1}\|v\|\|\nabla v\|^2+
C\eps^{-1}\|v\|^2 \|\nabla v\| + C\eps^{-1}\|v\|\|\nabla
v\|\Big{)}dt\\
&\leq \Big{(}4\|\nabla v\|^2+C\eps^{2k-6}\Big{)}dt\\
&\leq \Big{(}\frac12\eps^2\|\Delta v\|^2+8\eps^{-2}\|v\|^2+C\eps^{2k-6}\Big{)}dt\\
&\leq \Big{(}\frac12\eps^2\|\Delta v\|^2+C\eps^{2k-6}\Big{)}dt\;.
\end{split}
\end{equation}
To close the argument, we need a Poincar\'e type estimate. More
specifically, for any function satisfying Neumann boundary
conditions there exists some positive constant $c>0$ such that
\begin{equation}
\label{H1Poin}
2c \|\nabla v\|^2 \leq \| \Delta v \|^2\;.
\end{equation}
To prove the statement above, first let us denote by $\bar{v}$ the spatial
average of $v$, and then use interpolation and the standard
Poincar\'e inequality. So, we have
\[
\|\nabla v\|^2  = \|\nabla (v-\bar{v})\|^2
\leq  \| v-\bar{v}\| \| \Delta(v-\bar{v})\|
\leq C  \| \nabla(v-\bar{v})\| \| \Delta(v-\bar{v})\|
= C  \| \nabla v\| \| \Delta v\|\;.
\]
Therefore, by \eqref{eqn12}, we obtain the following lemma.
\begin{lemma}\label{lem1h1}
If $k\geq 3$ and $t\leq \tau_\eps$, with $\tau_\eps$ given by
\eqref{st5}, then for $c>0$ the constant appearing in Poincar\'e
inequality \eqref{H1Poin}, the following relation holds true
\begin{equation}\label{eqn13}
d\|\nabla v\|^2+c\eps^2\|\nabla v\|^2dt=\Gamma_\eps
dt+(Z,dW)+(\Psi,d\nabla W),
\end{equation}
for
\begin{equation}\label{eqn14}
\Gamma_\eps:=\mathcal{O}(\eps^{2k-6}+\eps^{-3}\eta_0^2+\eps^{-2}\eta_0+\eta_2),
\end{equation}
and
\begin{equation}\label{eqn15}
\|Z\|^2:=\mathcal{O}(\eps^{-2}\|\nabla
v\|^2),\;\;\;\;\|\Psi\|^2:=\mathcal{O}(\|\nabla v\|^2).
\end{equation}

Furthermore, it holds that
\begin{equation}
\|\nabla v(t)\|^2+c\eps^2\int_0^t\|\nabla v(s)\|^2ds\leq \|\nabla
v(0)\|^2+ \Gamma_\eps T_\eps+\int_0^t[(Z,dW)+(\Psi,d\nabla W)]ds,
\end{equation}
and thus
\begin{equation}\label{eqn16}
\mathbb{E}\|\nabla v(t)\|^2+c\eps^2\mathbb{E}\int_0^t\|\nabla
v(s)\|^2ds\leq \|\nabla v(0)\|^2+ \Gamma_\eps T_\eps.
\end{equation}
Note that by a slight abuse of notation, we identify $\Gamma_\eps$ 
defined in (\ref{eqn14}) with the corresponding $\mathcal{O}$-term.
\end{lemma}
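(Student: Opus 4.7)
The plan is to start from the Itô expansion \eqref{eqn12} that has already been established, substitute into it the bound \eqref{esjul2} for the nonlinear term $T_2$, and then apply the Poincaré-type inequality \eqref{H1Poin} so as to convert the dissipation $-\eps^2\|\Delta v\|^2$ into coercivity in $\|\nabla v\|$. Once the SDE-form \eqref{eqn13} is in hand, the final integral and expectation inequalities follow by straightforward stochastic calculus on the interval $[0,t\wedge\tau_\eps]$.

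Concretely, I would first rewrite \eqref{eqn12} as
\[
d\|\nabla v\|^2 = -\eps^2\|\Delta v\|^2\,dt + T_2 + \mathcal{O}\bigl(\eps^{2k-4}+\eps^{-3}\eta_0^2+\eps^{-2}\eta_0+\eta_2\bigr)\,dt + (Z,dW)+(\Psi,d\nabla W),
\]
where $Z,\Psi$ satisfy \eqref{eqn15} by inspection of the martingale terms in \eqref{eqn12}. Next, I would insert the bound \eqref{esjul2}, which for $t\le\tau_\eps$ (so that $\|v\|\le C\eps^{k-2}$) combined with the interpolation $\|\nabla v\|^2\le\|\Delta v\|\|v\|$ and Young's inequality yields
\[
T_2 \le \tfrac12\eps^2\|\Delta v\|^2\,dt + C\eps^{2k-6}\,dt.
\]
Noting that for small $\eps$ and $k\ge 3$ the term $\eps^{2k-4}$ is absorbed by $\eps^{2k-6}$, one obtains
\[
d\|\nabla v\|^2 \le -\tfrac12\eps^2\|\Delta v\|^2\,dt + \Gamma_\eps\,dt + (Z,dW)+(\Psi,d\nabla W)
\]
with $\Gamma_\eps$ as in \eqref{eqn14}. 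Applying \eqref{H1Poin} in the form $c\eps^2\|\nabla v\|^2\le \tfrac12\eps^2\|\Delta v\|^2$, we can move a $c\eps^2\|\nabla v\|^2\,dt$ term to the left-hand side, producing exactly \eqref{eqn13} (with the inequality absorbed into the $\mathcal{O}$ notation in the definition of $\Gamma_\eps$).

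For the integral version, I would integrate \eqref{eqn13} from $0$ to $t\le\tau_\eps\le T_\eps$, drop the dissipation on the right-hand side after bounding $\Gamma_\eps t\le \Gamma_\eps T_\eps$, and obtain the pathwise inequality. Taking expectation then kills the two stochastic integrals: by optional stopping (the integrands $Z,\Psi$ are uniformly bounded on $[0,\tau_\eps]$ thanks to the definition of $\tau_\eps$ in \eqref{st5} and the estimates \eqref{eqn15}), the stopped Itô integrals are square-integrable martingales with zero mean. This yields \eqref{eqn16}.

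The only point that requires a bit of care is ensuring that the absorption of $\tfrac12\eps^2\|\Delta v\|^2$ via Poincaré really leaves a clean coercive term $c\eps^2\|\nabla v\|^2$ after the constants are matched, and that the $T_2$-analysis is valid only up to $\tau_\eps$; outside that time interval neither the $L^2$-bound on $v$ nor the $H^1_\eps$-bound used to control the cubic terms can be assumed. Apart from this bookkeeping, the argument is routine: the entire proof is essentially an Itô expansion, a Young/Poincaré step, and an optional stopping argument.
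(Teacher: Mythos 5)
Your proposal is correct and follows essentially the same route as the paper: start from \eqref{eqn12}, substitute the bound \eqref{esjul2} for $T_2$ (valid for $t\le\tau_\eps$), absorb the $\eps^{2k-4}$ term into $\eps^{2k-6}$, then use the Poincar\'e-type inequality \eqref{H1Poin} to convert the remaining $-\tfrac12\eps^2\|\Delta v\|^2$ dissipation into a coercive $-c\eps^2\|\nabla v\|^2$ term, and finally integrate and take expectations with optional stopping to kill the martingales. The only small stylistic gap (shared with the paper) is that \eqref{eqn13} should properly be read as an inequality rather than an equality; you handle this correctly by observing the $\mathcal{O}$-notation absorbs the sign, and your remark that the stopped integrands are bounded on $[0,\tau_\eps]$, making the stopped stochastic integrals true zero-mean martingales, supplies a detail the paper leaves implicit.
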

We remark, that as in the $L^2$-case this lemma is only  the first step for proving stability;
higher moments will be derived by an induction argument in the
following section.


\subsection{Long-Time \texorpdfstring{$H^1$}--stability}

Keeping the same assumptions as in Lemma \ref{lem1h1}, we proceed similar to the $L^2$-stability result
by estimating for any integer $p>1$ the $p^\text{th}$-moment of $\|\nabla
v\|^2$.
It\^o calculus yields
\begin{equation}\label{eqn17}
d\|\nabla v\|^{2p}=p\|\nabla v\|^{2p-2}d\|\nabla
v\|^2+p(p-1)\|\nabla v\|^{2p-4}[d\|\nabla v\|^2]^2\;.
\end{equation}
Using now \eqref{eqn13}, we obtain
\begin{equation}\label{eqn18}
[d\|\nabla v\|^2]^2
\leq (Z,\mathcal{Q}Z)dt+(\Psi,\Delta\mathcal{Q}\Psi)dt
+2(Z,dW)(\Psi,d\nabla W).
\end{equation}
Observing now that $d\beta_i d\beta_j=\delta_{ij}$, we get by series expansion and Cauchy-Schwarz
\begin{equation}\label{eqn19}
\begin{split}
(Z,dW)(\psi,d\nabla W)
&=\sum a_i^2(Z,e_i)(\psi,\nabla e_i)
\leq \sum a_i^2\|e_i\|\|\nabla e_i\| \|Z\| \|\psi\|\\
& \leq\|Z\| \|\psi\| \sqrt{ \eta_0\eta_2}
\leq \|Z\|^2  \eta_0 + \|\psi\|^2\eta_2\;.
\end{split}
\end{equation}
By \eqref{eqn18} and \eqref{eqn19}, we arrive at
\begin{equation}\label{eqn20}
\begin{split}
[d\|\nabla v\|^2]^2
&\leq
(\|\mathcal{Q}\|\|Z\|^2+\|\Psi\|^2\|\Delta\mathcal{Q}\|)dt+2\|Z\|\|\Psi\|\sqrt{\eta_0\eta_2}dt\\
&\leq C(\|Z\|^2\eta_0
+\|\Psi\|^2\eta_2)dt.
\end{split}
\end{equation}
Replacing \eqref{eqn20} in \eqref{eqn17} and using \eqref{eqn13},
we get
\begin{equation*}
\begin{split}
d\|\nabla v\|^{2p}\leq& p\|\nabla v\|^{2p-2}d\|\nabla v\|^2+Cp(p-1)\|\nabla v\|^{2p-4}(\|Z\|^2\eta_0
+\|\Psi\|^2\eta_2)dt\\
\leq& p\|\nabla v\|^{2p-2}(\Gamma_\eps-c\eps^2\|\nabla v\|^2)dt\\
&+p\|\nabla v\|^{2p-2}[(Z,dW)+(\Psi,d\nabla W)]\\
&+Cp(p-1)\|\nabla v\|^{2p-4}(\|Z\|^2\eta_0
+\|\Psi\|^2\eta_2)dt.
\end{split}
\end{equation*}
In the  relation above, we now use \eqref{eqn15} and the fact that
$\|v\|\leq C\eps^{k-2}$. Hence, we obtain for any $t\leq
\tau_\eps$
\begin{equation}\label{eqn21}
\begin{split}
d\|\nabla v\|^{2p}
\leq& p\|\nabla v\|^{2p-2}(\Gamma_\eps-c\eps^2\|\nabla v\|^2)dt
+p\|\nabla v\|^{2p-2}[(Z,dW)+(\Psi,d\nabla W)]\\
&+C\|\nabla v\|^{2p-2}\eps^{-2}\eta_0dt +C\|\nabla
v\|^{2p-2}\eta_2dt.
\end{split}
\end{equation}
Here, all appearing constants may depend on $p$.
Integrating \eqref{eqn21} we derive the following
lemma:
\begin{lemma}\label{lem1h2}
Under the assumptions of Lemma \ref{lem1h1}, then for any integer
$p\geq 2$, the following estimate holds true
\begin{equation}\label{eqn22}
\begin{split}
\mathbb{E}\|\nabla
v(\tau_\eps)\|^{2p}+c\eps^2p\mathbb{E}\int_0^{\tau_\eps}\|\nabla
v(t)\|^{2p}dt\leq &\|\nabla
v(0)\|^{2p}+C[\Gamma_\eps+\eps^{-2}\eta_0+\eta_2] \cdot \mathbb{E}
\int_0^{\tau_\eps}\|\nabla v(t)\|^{2p-2}dt.
\end{split}
\end{equation}
Here, $c,C>0$ are constants that may depend on $p\in\mathbb{N}$
and $\Gamma_\eps$ was defined in (\ref{eqn14}).
\end{lemma}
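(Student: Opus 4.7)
The plan is essentially to integrate the Itô differential inequality (\ref{eqn21}) from $0$ to $\tau_\eps$, take expectations, and rearrange. All the nontrivial analytic work has already been carried out: the Itô expansion of $d\|\nabla v\|^{2p}$ in (\ref{eqn17}), the quadratic variation bound (\ref{eqn20}), and the substitution of the estimates (\ref{eqn15}) together with the a priori control $\|v(t)\|\leq C_0\eps^{k-2}$ valid on $[0,\tau_\eps]$.

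First I would verify that the stochastic integrals arising from (\ref{eqn21}),
\[
\int_0^{t\wedge\tau_\eps} p\|\nabla v\|^{2p-2}(Z,dW) \quad\text{and}\quad \int_0^{t\wedge\tau_\eps} p\|\nabla v\|^{2p-2}(\Psi,d\nabla W),
\]
are genuine martingales of zero mean. This is immediate from the definition \eqref{st5} of $\tau_\eps$, since on $[0,\tau_\eps]$ one has $\|\nabla v\|\leq C_0\eps^{k-4-\tilde\kappa}$, and therefore (using \eqref{eqn15} and the finiteness of $\eta_0,\eta_2$) the integrands $\|\nabla v\|^{2p-2}\|Z\|$ and $\|\nabla v\|^{2p-2}\|\Psi\|$ are uniformly bounded. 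In particular, no further localisation beyond $\tau_\eps$ is needed.

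Next, integrating (\ref{eqn21}) on $[0,\tau_\eps]$ and taking expectations, the two martingale terms vanish, so that
\[
\mathbb{E}\|\nabla v(\tau_\eps)\|^{2p}
+ c\eps^2 p\,\mathbb{E}\int_0^{\tau_\eps}\!\!\|\nabla v\|^{2p}\,dt
\leq \|\nabla v(0)\|^{2p}
+ C\,\mathbb{E}\int_0^{\tau_\eps}\!\!\|\nabla v\|^{2p-2}\bigl[\Gamma_\eps + \eps^{-2}\eta_0 + \eta_2\bigr]dt,
\]
after absorbing the $p$-dependent prefactors into the constant $C=C(p)$ on the right-hand side and moving the dissipative term $-c\eps^2 p\|\nabla v\|^{2p}$ to the left. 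Since $\Gamma_\eps$ in (\ref{eqn14}) is already deterministic and of the stated order, it can be pulled out of the expectation, yielding the claimed inequality (\ref{eqn22}).

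I do not expect any serious obstacle here: the delicate work (the Itô expansion, the quadratic-variation bound based on the Cauchy--Schwarz computation (\ref{eqn19}), the $T_2$-estimate using Nirenberg and Poincaré, and the control of the noise-covariance traces) has already been carried out in the derivation preceding the lemma. The only mild subtlety is justifying that the martingale terms have zero expectation on the stopped interval, which as noted above is automatic because the stopping time $\tau_\eps$ forces uniform boundedness of the integrands.
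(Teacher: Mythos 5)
Your proposal is correct and matches the paper's argument, which is likewise simply ``integrate (\ref{eqn21}), take expectations, use optional stopping, and rearrange.'' The paper does not even spell out the martingale justification that you add, so your version is if anything slightly more complete.
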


Keeping the same assumptions as these of Lemma \ref{lem1h1}, we
consider a sufficiently small noise such that
\begin{equation}
 \eps^{-2}\eta_0+\eta_2 =\mathcal{O}( \eps^{2k-6}),
\end{equation}
i.e.,\ the main order term of $\Gamma_\eps+\eps^{-2}\eta_0+\eta_2$
is $\eps^{2k-6}$.

Furthermore, in \eqref{eqn22} we define
\[
K_p:=\mathbb{E}\int_0^{\tau_\eps}\|\nabla v(t)\|^{2p}dt,
\]
and thus, we obtain for $p\geq 2$
\begin{equation*}
\begin{split}
K_p \leq& \eps^{-2}\|\nabla
v(0)\|^{2p}+C\eps^{-2}[\Gamma_\eps+\eps^{-2}\eta_0+\eta_2]K_{p-1}
\\
\leq&\eps^{-2}\|\nabla
v(0)\|^{2p}+C\eps^{2k-8}K_{p-1}\\
=:& \eps^{-2}A^p+aK_{p-1},
\end{split}
\end{equation*}
for $$A:=\|\nabla v(0)\|^{2},\;\;\;\;a:=C\eps^{2k-8}<1.$$ So, we
have inductively
\begin{equation*}
\begin{split}
K_p  \leq   \eps^{-2} A^p+aK_{p-1}
&\leq \eps^{-2} A^p+ \eps^{-2} a  K_{p-2} A^{p-1}+ 2 a^2 K_{p-2}\\
& \leq \ldots 
\leq C\eps^{-2} \sum_{i=2}^p  a^{p-i}A^i+Ca^{p-1}K_1.
\end{split}
\end{equation*}
 By definition and  \eqref{eqn16} we get
\[
K_1\leq \eps^{-2}A + \eps^{-2}\Gamma_\eps T_\eps \leq \eps^{-2}A +
Ca T_\eps .
\]
Hence, we obtain
\begin{equation}
K_p \leq C\eps^{-2}\sum_{i=1}^p  a^{p-i}A^i + Ca^{p} T_\eps \leq
C[\eps^{-2}+T_\eps] a^{p} + C \eps^{-2} A^{p},
\end{equation}
for $C>0$ a constant depending on $p$.

This yields the following lemma. 
Note that here we need only $k>3$. It is the $L^2$-bound that needs $k>5$.

\begin{lemma}\label{lem1h3}
Let $k\geq 3$ and $\tau_\eps$ given by \eqref{st5}. If
$$
 \Gamma_\eps +\eps^{-2}\eta_0+\eta_2\leq C\eps^{2k-6},
$$
and if $$\|\nabla v(0)\|^2 \leq a:=C\eps^{2k-8},$$ then for any
integer $p>1$ it holds that
\begin{equation}
\begin{split}
\mathbb{E}\|\nabla v(\tau_\eps)\|^{2p}\leq C \eps^{-2}  [\eps^{-2}+T_\eps] a^{p}.
\end{split}
\end{equation}
\end{lemma}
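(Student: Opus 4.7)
The plan is to deduce the lemma directly from the recursive estimate already established in \eqref{eqn22} and Lemma \ref{lem1h1}, by an induction on $p$. All the hard analytic work (It\^o's formula, the Poincar\'e-type bound \eqref{H1Poin}, the control of the nonlinear term $T_2$ via the $L^2$-stopping time, and the moment inequality) has already been done, so the remaining task is purely algebraic: unroll the induction and then feed the resulting bound on $K_{p-1}:=\mathbb{E}\int_0^{\tau_\eps}\|\nabla v\|^{2p-2}dt$ back into \eqref{eqn22}.

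Concretely, I would proceed as follows. Under the standing hypothesis $\Gamma_\eps+\eps^{-2}\eta_0+\eta_2\le C\eps^{2k-6}$, dividing \eqref{eqn22} by $pc\eps^2$ gives the recursion
\begin{equation*}
K_p \le \eps^{-2}A^p + a K_{p-1},\qquad A:=\|\nabla v(0)\|^2,\;\;a:=C\eps^{2k-8},
\end{equation*}
already displayed in the text. The base case $p=1$ is just \eqref{eqn16}: it yields
\[
K_1 \le \eps^{-2}A + \eps^{-2}\Gamma_\eps T_\eps \le \eps^{-2}A + C a T_\eps,
\]
which has exactly the required shape. Iterating the recursion one obtains
\[
K_p \le C\eps^{-2}\sum_{i=1}^{p} a^{p-i}A^i + Ca^{p-1}K_1,
\]
and invoking the hypothesis $A\le a$ the sum collapses to $Cpa^p$, so that
\[
K_p \le C(\eps^{-2}+T_\eps)a^p.
\]

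Finally, to get the stated bound on $\mathbb{E}\|\nabla v(\tau_\eps)\|^{2p}$ I return to \eqref{eqn22} and drop the integral term on the left, obtaining
\[
\mathbb{E}\|\nabla v(\tau_\eps)\|^{2p} \le A^p + C\eps^{2k-6}K_{p-1}.
\]
Writing $\eps^{2k-6}=\eps^{2}a$, substituting the bound on $K_{p-1}$ derived above, and using once more $A\le a$, the right-hand side is controlled by $C\eps^{-2}[\eps^{-2}+T_\eps]a^p$, which gives the lemma.

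The only real obstacle I anticipate is keeping the $p$-dependence of the constants explicit enough so that the induction closes uniformly in $p$ (all the constants can depend on $p$, as stated, but one should verify that the prefactors arising from It\^o calculus in \eqref{eqn21} do not blow up after unrolling). A minor subtlety is the use of $A\le a$: this is what makes the sum $\sum_{i=1}^p a^{p-i}A^i$ collapse to $Ca^p$ rather than remaining a genuinely mixed $p+1$-term polynomial; this is really where the hypothesis $\|\nabla v(0)\|^2 \le a$ enters decisively. Once this is observed, no further work is required beyond substitution.
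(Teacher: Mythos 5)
Your proposal is correct and follows essentially the same route as the paper: iterate the recursion $K_p\le\eps^{-2}A^p+aK_{p-1}$ down to $K_1$ (controlled via \eqref{eqn16}), collapse the resulting sum using $A\le a$ to obtain $K_p\le C[\eps^{-2}+T_\eps]a^p$, and then substitute this bound for $K_{p-1}$ back into \eqref{eqn22} after discarding the integral on the left. The only cosmetic difference is that you fold in $A\le a$ before the final substitution, whereas the paper carries the $A^{p-1}$ term one step further and absorbs it at the end; the content is identical.
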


\begin{proof}
Using the definitions of $a$, $A$, and $K_p$ and relation
\eqref{eqn22}, we have
\begin{equation*}
\begin{split}
\mathbb{E}\|\nabla v(\tau_\eps)\|^{2p}\leq &A^{p} + C\eps^{-2} a K_{p-1}\\
\leq & A^{p} +  C \eps^{-2}  [\eps^{-2}+T_\eps] a^{p}
+ C \eps^{-4} a A^{p-1}\\
\leq & C \eps^{-2}  [\eps^{-2}+T_\eps] a^{p}.
\end{split}
\end{equation*}
\end{proof}

We proceed now to the proof of the following main stability
result.

\begin{theorem}\label{newh1thm}
Consider the exit time
$$
\tau_\eps:=\inf\{t\in[0,T_\eps] \ : \ \|\nabla v(t)\| >
C_0\eps^{k-4-\tilde\kappa} \text{ or } \|v(t)\|>C_0\eps^{k-2}\} ,
$$
where $T_\eps:=\varepsilon^{-N}$ for arbitrary and fixed large
$N>0$ and for arbitrary and small $\tilde\kappa>0$. Let also,
$$k> 5 \;, \quad
\|v(0)\|< \eta\eps^{k-2} \quad \text{and}\quad \|\nabla v(0)\|<
\eta\eps^{k-4}.
$$
In addition, let
$$\eta_0\leq C\eps^{2k-2+\tilde{k}}
\quad \text{and}\quad \eta_2 \leq C\eps^{2k-6},
$$
be the assumptions for the noise, for $\tilde{k}>0$ small. Then
the probability $\mathbb{P} (\tau_\eps < T_\eps )$ is smaller than
any power of $\varepsilon$, as $\varepsilon \to 0$. So, for very
large times and with high probability, the solution stays close
to the manifold in the $H^1_\eps$-norm.
\end{theorem}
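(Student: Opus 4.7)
The plan is to combine the already-established $L^2$-stability of Theorem \ref{newl2thm} with the $H^1$-moment bound of Lemma \ref{lem1h3}, and then to apply Chebyshev's inequality with a suitably large integer power $p$ chosen depending on $N$ and $\tilde\kappa$. First I would verify that the noise and initial-data hypotheses of the theorem match those of Lemma \ref{lem1h3}: from $\eta_0\leq C\eps^{2k-2+\tilde k}$ one has $\eps^{-2}\eta_0=\mathcal{O}(\eps^{2k-4+\tilde k})$ and $\eps^{-3}\eta_0^2=\mathcal{O}(\eps^{4k-7+2\tilde k})$, both much smaller than $\eps^{2k-6}$, so together with $\eta_2\leq C\eps^{2k-6}$ this yields $\Gamma_\eps+\eps^{-2}\eta_0+\eta_2=\mathcal{O}(\eps^{2k-6})$. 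The initial condition $\|\nabla v(0)\|^2<\eta^2\eps^{2k-8}$ matches the required $a:=C\eps^{2k-8}$, and the $L^2$-hypotheses of Theorem \ref{newl2thm} are likewise in force with $B=C_0\eps^{k-2}$.

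Next I would decompose the exit event. By path-continuity of the solution, if $\tau_\eps<T_\eps$ then at least one of the two conditions defining \eqref{st5} must be reached at $t=\tau_\eps$, so
\begin{equation*}
\{\tau_\eps<T_\eps\}\subset\{\tau_{L^2}<T_\eps\}\cup\bigl\{\|\nabla v(\tau_\eps)\|\ge C_0\eps^{k-4-\tilde\kappa}\bigr\},
\end{equation*}
where $\tau_{L^2}$ is the pure $L^2$-exit time of Theorem \ref{newl2thm}. That theorem already bounds $\mathbb{P}(\tau_{L^2}<T_\eps)$ by an arbitrarily large power of $\eps$. For the second event, Chebyshev's inequality together with Lemma \ref{lem1h3} gives
\begin{equation*}
\mathbb{P}\bigl(\|\nabla v(\tau_\eps)\|\ge C_0\eps^{k-4-\tilde\kappa}\bigr)\leq\frac{\mathbb{E}\|\nabla v(\tau_\eps)\|^{2p}}{C_0^{2p}\,\eps^{(2k-8-2\tilde\kappa)p}}\leq\frac{C\,\eps^{-2}(\eps^{-2}+T_\eps)}{C_0^{2p}}\,\eps^{2p\tilde\kappa}.
\end{equation*}

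With $T_\eps=\eps^{-N}$ the right-hand side is of order $\eps^{2p\tilde\kappa-N-2}$ up to constants, so for any prescribed target power $M$ it suffices to take $p>(M+N+2)/(2\tilde\kappa)$; the strictly positive slack $\tilde\kappa$ present in the noise hypothesis is exactly what enables such a choice, in perfect analogy with its role in the $L^2$-stability proof. Combining the two bounds yields $\mathbb{P}(\tau_\eps<T_\eps)=\mathcal{O}(\eps^M)$ for any $M$, which is the claim.

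The main conceptual obstacle is the mutual coupling of the two stopping criteria: the $H^1$ moment bound of Lemma \ref{lem1h3} was derived under the standing assumption that the $L^2$-bound $\|v\|\leq C_0\eps^{k-2}$ remains valid up to $\tau_\eps$ (crucially used to absorb the cubic contribution $T_2$ into $\tfrac12\eps^2\|\Delta v\|^2+C\eps^{2k-6}$), so the $H^1$-argument genuinely relies on the previous $L^2$-stability. Once this interdependence is handled through the decomposition above, the remaining work is the routine Chebyshev computation already carried out, and no further PDE estimate is needed.
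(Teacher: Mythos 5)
Your proof is correct and follows essentially the same route as the paper's: decompose the exit event into the $L^2$-exit (already controlled by Theorem \ref{newl2thm}) and the gradient-exit, then apply Chebyshev's inequality with the moment bound of Lemma \ref{lem1h3} and choose $p$ large against $\tilde\kappa$. Your additional verification that the noise hypotheses of the theorem indeed imply $\Gamma_\eps+\eps^{-2}\eta_0+\eta_2=\mathcal{O}(\eps^{2k-6})$, and your explicit remark about the interdependence of the two stopping criteria, are sensible clarifications but do not change the argument.
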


\begin{proof}
 Obviously, we have
 \[\mathbb{P}(\tau_\eps<T_\eps)
 \leq \mathbb{P}(\|\nabla v(\tau_\eps)\|\geq C_0\eps^{k-4-\tilde\kappa})
+\mathbb{P}(\|v(\tau_\eps)\|\geq C_0\eps^{k-2}).
 \]
 Using now Theorem \ref{newl2thm} for any large $\ell > 1$ and
 Chebychev's inequality, we obtain
\begin{equation*}
\begin{split}
\mathbb{P}(\tau_\eps<T_\eps)
&\leq C\eps^{-2p(k-4-\tilde\kappa)} \mathbb{E}\|\nabla v(\tau_\eps)\|^{2p}+C\eps^{\ell}
\\
&\leq C\eps^{2p\tilde\kappa}  \eps^{-2}  [\eps^{-2}+T_\eps]
+C\eps^{\ell},
\end{split}
\end{equation*}
where  Lemma \ref{lem1h3} was applied. Choosing $p \gg
1/2\tilde\kappa$ yields the result.
\end{proof}
Let us rephrase Theorem \ref{newh1thm} slightly:
\begin{corollary}
Under the assumptions of Theorem \ref{newh1thm} and if $w(0)\in
\mathcal{N}_{L^2}^{\eta \eps^{k-2}}$ and $\nabla w(0)\in
\mathcal{N}_{L^2}^{\eta \eps^{k-4}}$ for any $\eta
>0$, then for any sufficiently large $C>\eta$ and any
$q\in\mathbb{N}$ there exists a constant $C_q>0$ such that
\[
\mathbb{P} \Big( w(t)\in \mathcal{N}_{H^1_\eps}^{C
\eps^{k-3-\tilde{k}}} \text{ for all } t\in [0,\eps^{-q}] \Big)
\geq 1 - C_q\eps^q.
\]
\end{corollary}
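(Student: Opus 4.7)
The plan is straightforward because this corollary is essentially a rephrasing of Theorem \ref{newh1thm} in the language of the tubular neighborhoods $\mathcal{N}_{H^1_\eps}^r$. The only real work is (a) to translate the initial-data neighborhood conditions on $w(0)$ into pointwise bounds on $v(0)$ and $\nabla v(0)$ of the kind required by the theorem, and (b) to translate the exit-time conclusion back into the neighborhood statement.

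For the initial data, I would use the well-defined smooth $L^2$-projection onto $\mathcal{M}$ (available in any sufficiently small $L^2$-tube, as noted at the end of Section~1.4) to choose $\xi(0)$ so that $v(0):=w(0)-u(\cdot,\xi(0),\eps)\perp_{L^2}\partial_\xi u$. From $w(0)\in\mathcal{N}_{L^2}^{\eta\eps^{k-2}}$ I get $\|v(0)\|<\eta\eps^{k-2}$ up to a universal constant. For the gradient, I would write
\[
\|\nabla v(0)\| \leq \|\nabla w(0)-\nabla u(\cdot,\tilde\xi,\eps)\| + \|\nabla u(\cdot,\tilde\xi,\eps)-\nabla u(\cdot,\xi(0),\eps)\|,
\]
where $\tilde\xi$ is a basepoint on $\mathcal{M}$ witnessing the hypothesis $\nabla w(0)\in \mathcal{N}_{L^2}^{\eta\eps^{k-4}}$. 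The first term is bounded by $\eta\eps^{k-4}$, while the second is $\lesssim |\tilde\xi-\xi(0)|\cdot\|\nabla\partial_\xi u\|$, with $|\tilde\xi-\xi(0)|$ controlled by the $L^2$-closeness of $w(0)$ to $\mathcal{M}$. A routine compactness-and-smoothness argument on the one-dimensional manifold $\mathcal{M}$ then yields $\|\nabla v(0)\|\leq C\eta\eps^{k-4}$, possibly after enlarging $\eta$ by a fixed geometric constant that may be absorbed in the hypothesis.

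Next, I would invoke Theorem \ref{newh1thm} with $N:=q$, producing a stopping time $\tau_\eps$ with $\mathbb{P}(\tau_\eps<\eps^{-q})\leq C_q\eps^q$. On the complementary event $\{\tau_\eps=\eps^{-q}\}$, the very definition of $\tau_\eps$ in \eqref{st5} gives $\|v(t)\|\leq C_0\eps^{k-2}$ and $\|\nabla v(t)\|\leq C_0\eps^{k-4-\tilde\kappa}$ for every $t\in[0,\eps^{-q}]$. Combining these two bounds,
\[
\|v(t)\|_{H^1_\eps}^2 = \eps^2\|\nabla v(t)\|^2+\|v(t)\|^2 \leq C_0^2\eps^{2k-6-2\tilde\kappa}+C_0^2\eps^{2k-4}\leq 2C_0^2\eps^{2k-6-2\tilde\kappa},
\]
for $\eps$ small. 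Setting $C:=\sqrt{2}C_0$ and identifying $\tilde{k}$ with $\tilde\kappa$ yields $d_{H^1_\eps}(w(t),\mathcal{M})\leq\|v(t)\|_{H^1_\eps}\leq C\eps^{k-3-\tilde{k}}$, i.e.\ $w(t)\in\mathcal{N}_{H^1_\eps}^{C\eps^{k-3-\tilde{k}}}$ throughout $[0,\eps^{-q}]$, on an event of probability at least $1-C_q\eps^q$. The only mildly delicate point is the gradient translation in step (a); the rest is a direct rereading of Theorem \ref{newh1thm}.
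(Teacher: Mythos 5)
The paper presents this corollary with no proof at all — it is introduced simply as ``Let us rephrase Theorem~\ref{newh1thm} slightly'' — so there is no ``paper's own proof'' against which to compare. Your proposal supplies exactly the routine translation that the authors evidently intend: project $w(0)$ onto $\CM$ to obtain the orthogonal decomposition $w(0)=u(\cdot,\xi(0),\eps)+v(0)$, pass the tube hypotheses into the pointwise bounds $\|v(0)\|<\eta\eps^{k-2}$ and $\|\nabla v(0)\|\lesssim\eta\eps^{k-4}$ required by Theorem~\ref{newh1thm}, invoke the theorem with $T_\eps=\eps^{-N}$ for $N=q$, and on the high-probability event $\{\tau_\eps=T_\eps\}$ combine the two tube bounds from \eqref{st5} via $\|v\|_{H^1_\eps}^2=\eps^2\|\nabla v\|^2+\|v\|^2\le 2C_0^2\eps^{2k-6-2\tilde\kappa}$ to land in $\mathcal{N}_{H^1_\eps}^{C\eps^{k-3-\tilde{k}}}$. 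That chain is exactly right, and the final Chebychev-to-neighborhood translation is clean.

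One point deserves sharpening, and you correctly flag it as the only delicate step. When you bound $\|\nabla u(\cdot,\tilde\xi,\eps)-\nabla u(\cdot,\xi(0),\eps)\|$ you write that $|\tilde\xi-\xi(0)|$ is ``controlled by the $L^2$-closeness of $w(0)$ to $\CM$.'' This is only automatic if the \emph{same} basepoint on $\CM$ witnesses both tube conditions; otherwise you would have to first pass from the gradient witness to an $L^2$ bound via Poincar\'e (using that $v$ has zero spatial average), and the resulting numerology degrades: you get $|\tilde\xi-\xi(0)|=\mathcal{O}(\eta\eps^{k-7/2})$ instead of $\mathcal{O}(\eta\eps^{k-3/2})$, and then $\|\nabla u(\cdot,\tilde\xi)-\nabla u(\cdot,\xi(0))\|=\mathcal{O}(\eta\eps^{k-5})$, which is \emph{not} absorbed into $\eta\eps^{k-4}$. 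With the natural reading of the hypothesis — a single $u\in\CM$ with both $\|w(0)-u\|<\eta\eps^{k-2}$ and $\|\nabla w(0)-\nabla u\|<\eta\eps^{k-4}$ — your argument closes: then $\|u(\cdot,\tilde\xi)-u(\cdot,\xi(0))\|\le 2\eta\eps^{k-2}$, hence $|\tilde\xi-\xi(0)|\lesssim\eta\eps^{k-3/2}$ using $\|\partial_\xi u\|\gtrsim\eps^{-1/2}$, and so $\|\nabla u(\cdot,\tilde\xi)-\nabla u(\cdot,\xi(0))\|\lesssim\eta\eps^{k-3/2}\cdot\eps^{-3/2}=\eta\eps^{k-3}$, which is indeed of higher order than $\eta\eps^{k-4}$. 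You should state this interpretation explicitly and record the two estimates $\|\partial_\xi u\|\ge C^{-1}\eps^{-1/2}$ and $\|\nabla\partial_\xi u\|=\mathcal{O}(\eps^{-3/2})$ from Section~\ref{sec44}, rather than appealing to a ``routine compactness-and-smoothness argument,'' since the conclusion actually depends on the precise $\eps$-exponents. With that made explicit, the proof is correct and complete.
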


%



\section{Estimates}
\label{sec44}

In this section, we will present the estimates of
$(\partial_\xi^2 u,\partial_\xi u)$ and of $\partial_\xi u$,
$\partial_\xi^2 u$, $\partial_\xi^3 u$ in various norms, used
throughout the previous sections.
Recall that $\Gamma$ was the small semicircle where $u^\xi=0$, and apart from a
small neighborhood, $u^\xi\approx 1$ inside and  $u^\xi\approx -1$ outside.
Our proof extends certain lower order results of
\cite{acfu} derived for the deterministic problem.  First, we estimate
the scalar product between $\partial_\xi^2 u$ and $\partial_\xi u$, which
can be bounded much better than via Cauchy-Schwarz.
\begin{theorem}\label{fest}
The following estimate holds true
\begin{equation}\label{estder}
(\partial_\xi^2 u,\partial_\xi u)=\mathcal{O}(\eps^{-1}).
\end{equation}
\end{theorem}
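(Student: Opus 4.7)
The plan is to exploit the identity
\begin{equation*}
(\partial_\xi^2 u,\partial_\xi u)=\tfrac{1}{2}\partial_\xi\|\partial_\xi u\|^2,
\end{equation*}
which reduces the claim to controlling the $\xi$-derivative of $\|\partial_\xi u\|^2$. The naive Cauchy--Schwarz bound $\|\partial_\xi^2 u\|\,\|\partial_\xi u\|=\mathcal{O}(\eps^{-2})$ loses one power of $\eps$; the point of the theorem is that although $\|\partial_\xi u\|^2$ itself is of size $\eps^{-1}$, its leading $\eps^{-1}$-coefficient is a smooth, $\eps$-independent geometric quantity whose $\xi$-derivative is $\mathcal{O}(1)$.

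Concretely, I would work in Fermi coordinates $(r,s)$ in a tubular neighborhood of $\Gamma(\xi)$, with $r$ the signed distance to $\Gamma(\xi)$ and $s$ arc-length along $\Gamma(\xi)$. By Lemma~\ref{lem:defu} and the construction in \cite{acfu}, $u$ admits the asymptotic expansion $u=U(r/\eps)+\eps\,u_1(r,s,\xi)+\mathcal{O}(\eps^2)$, uniformly away from the two corners $\Gamma\cap\partial\Omega_\delta$, where $U$ is the heteroclinic of \eqref{heterocl}. The chain rule in these coordinates gives
\begin{equation*}
\partial_\xi u=\eps^{-1}U'(r/\eps)\,\rho(s,\xi)+\mathcal{O}(1),\qquad \rho(s,\xi):=\partial_\xi r,
\end{equation*}
with a smooth $\mathcal{O}(1)$ remainder. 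Changing variables via $R=r/\eps$ and using the Jacobian $dx=(1-\kappa(s)\eps R)\,\eps\,dR\,ds$, the square of the dominant term collapses to
\begin{equation*}
\|\partial_\xi u\|^2=\eps^{-1}\,C_U\!\int_{\Gamma(\xi)}\!\rho(s,\xi)^2\,ds+\mathcal{O}(1),\qquad C_U:=\!\int_{-\infty}^{\infty}\!(U'(R))^2\,dR,
\end{equation*}
because the cross-term between the leading $\eps^{-1}U'(r/\eps)\rho$ piece and the $\mathcal O(1)$ remainder contributes only $\mathcal{O}(1)$ after integration in the thin $\eps$-layer.

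Because $\Gamma(\xi)$ and the geometric quantity $\rho(s,\xi)$ depend smoothly on $\xi$ and are independent of $\eps$, the bracketed integral is a smooth function of $\xi$ with $\mathcal{O}(1)$ derivative; hence $\partial_\xi\|\partial_\xi u\|^2=\mathcal{O}(\eps^{-1})$, yielding \eqref{estder}. The main obstacle is justifying the above expansion uniformly in $\xi$, including the differentiation-under-the-integral step across the $\xi$-varying interface $\Gamma(\xi)$ (a standard Reynolds-type argument once the Fermi chart is set up) and controlling the two corner regions where the Fermi coordinates degenerate. These corners, however, have area $\mathcal{O}(\eps^2)$ by the matched-asymptotic analysis in \cite{acfu}, while the pointwise integrand $|\partial_\xi^2 u|\,|\partial_\xi u|$ there is at worst $\mathcal{O}(\eps^{-3})$, so their contribution to $(\partial_\xi^2 u,\partial_\xi u)$ is $\mathcal{O}(\eps^{-1})$ and does not spoil the bound.
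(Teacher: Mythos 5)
Your argument is based on the same starting identity the paper uses, namely $(\partial_\xi^2 u,\partial_\xi u)=\tfrac12\partial_\xi\|\partial_\xi u\|^2$, but then the two proofs diverge. The paper chooses a fixed polar-type parametrization $x=(r\cos\theta,r\sin\theta)$ of $\Omega_\delta$, rewrites the $\theta$-integral in terms of the boundary arc-length $\xi$, and \emph{integrates by parts in $\xi$}; the boundary term vanishes because $\partial\Omega_\delta$ is a closed curve, and the two residual integrals are estimated directly using the local form $\partial_\xi u=-\eps^{-1}\{\cos(\pi s/|\Gamma|)\dot U(r/\eps)+\mathcal O(\eps)\}$. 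Crucially, after the integration by parts the paper never needs to differentiate the $\mathcal O(1)$ part of $\|\partial_\xi u\|^2$; the $\xi$-derivative lands on explicit geometric quantities $(\hat R,\,\theta_\xi,\,(R'^2+R^2)^{-1/2})$ whose sizes are tracked separately. You instead work in Fermi coordinates near $\Gamma$, extract the leading $\eps^{-1}$ coefficient of $\|\partial_\xi u\|^2$ as a smooth $\eps$-independent functional of $\xi$, and then differentiate. That is a legitimate alternative strategy, and your treatment of the corner regions (area $\mathcal O(\eps^2)$ against a pointwise integrand of size $\mathcal O(\eps^{-3})$) is sound.

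There is, however, a genuine gap at the final step. You establish
\[
\|\partial_\xi u\|^2=\eps^{-1}\,C_U\!\int_{\Gamma(\xi)}\!\rho(s,\xi)^2\,ds+\mathcal O(1)
\]
and then conclude $\partial_\xi\|\partial_\xi u\|^2=\mathcal O(\eps^{-1})$ because the bracketed integral has $\mathcal O(1)$ derivative in $\xi$. But you say nothing about $\partial_\xi$ of the $\mathcal O(1)$ remainder, and an $\mathcal O(1)$ size bound gives no control at all on a derivative: a term like $\cos(\xi/\eps^2)$ is $\mathcal O(1)$ with $\mathcal O(\eps^{-2})$ derivative, which would destroy the estimate. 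To close this you would need to verify that the remainder inherits the structure $\phi(r/\eps,s,\xi)$ from the matched asymptotics in \cite{acfu}, with $\phi$ and its $\xi$-derivatives uniformly bounded, so that $\partial_\xi$ produces at worst one factor of $\eps^{-1}$; equivalently, one can expand both $\partial_\xi u$ and $\partial_\xi^2 u$, observe that the candidate $\mathcal O(\eps^{-2})$ contribution to the inner product is $\eps^{-2}\bigl(\int_{\mathbb R}U'(R)U''(R)\,dR\bigr)\int\rho^2 r_\xi\,ds=0$ since $\int U'U''=\tfrac12[\dot U^2]_{-\infty}^{\infty}=0$, and only then read off the surviving $\mathcal O(\eps^{-1})$ term. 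Either fix makes the argument complete, but the cancellation mechanism has to be made explicit; as written, the passage from the $\mathcal O(1)$-remainder bound to the derivative bound is unjustified. This is exactly the point the paper's integration by parts is designed to sidestep.
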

\begin{proof}
We may consider the case that $\Omega_\delta$ is a normal graph
over the unit sphere, so that any $x\in\Omega_\delta$ is
represented by $x=(r\cos(\theta),r\sin(\theta))$ for any
$0\leq\theta < 2\pi$ and any $0\leq r < R(\theta)$, where
$R(\theta)$ is the distance of the point of the boundary
$\partial\Omega_\delta$ from the origin, at the angle $\theta$.
This is not restrictive since, as we shall see, our integral
vanishes outside a neighborhood of a point on
$\partial\Omega_\delta$. The coordinate $r$ here should not be
confused with the local coordinate near $\Gamma$. Therefore, we
have
\begin{equation*}
(\partial_\xi^2 u,\partial_\xi u)=\int_{\Omega_\delta}
\partial_\xi^2 u\partial_\xi u
dx=\int_0^{2\pi}\int_0^{R(\theta)}\frac{1}{2}\frac{d}{d\xi}(\partial_\xi u)^2rdr\,d\theta.
\end{equation*}
Observe that if
$\partial\Omega_\delta=(a(\theta),b(\theta))=(R(\theta)\cos(\theta),R(\theta)\sin(\theta))$ for
$t\in[0,2\pi]$ then, the arc-length parameter $\xi$ of
$\partial\Omega_\delta$ is given by
$$\xi(\theta)=\int_0^\theta(a'(t)^2+b'(t)^2)^{1/2}dt.$$
 Therefore,
$\xi_\theta=(R'(\theta)^2+R(\theta)^2)^{1/2}$ and thus
$$d\xi=\xi_\theta d\theta=(R'(\theta)^2+R(\theta)^2)^{1/2}
d\theta.$$

 Setting $L:=|\partial\Omega_\delta|$ and using that the boundary is a
closed curve, it follows for $\hat{R}(\xi)=R(\theta)$ that
\begin{equation*}
\begin{split}
(\partial_\xi^2 u,\partial_\xi
(u))=&\int_0^{L}\int_0^{\hat{R}(\xi)}\frac{1}{2}\frac{d}{d\xi}(\partial_\xi
(u))^2rdr
(R'(\theta)^2+R(\theta)^2)^{-1/2}d\xi\\
=&\int_0^{L}
\frac{1}{2}\frac{d}{d\xi}\Big{(}\int_0^{\hat{R}(\xi)}(\partial_\xi u)^2rdr\Big{)}(R'(\theta)^2+R(\theta)^2)^{-1/2}d\xi\\
&- \int_0^{L}\frac{1}{2}\hat{R}_\xi(\xi)(\partial_\xi
(u)(\hat{R},\xi))^2\hat{R}(\xi)
(R'(\theta)^2+R(\theta)^2)^{-1/2}d\xi\\
=&\Big{[}
\frac{1}{2}\Big{(}\int_0^{\hat{R}(\xi)}(\partial_\xi u)^2rdr\Big{)}(R'(\theta)^2+R(\theta)^2)^{-1/2}\Big{]}_0^L\\
&-\int_0^{L}
\frac{1}{2}\Big{(}\int_0^{\hat{R}(\xi)}(\partial_\xi u)^2rdr\Big{)} \partial_\xi \Big{(}(R'(\theta)^2+R(\theta)^2)^{-1/2}\Big{)} d\xi\\
&- \int_0^{L}\frac{1}{2}\hat{R}_\xi(\xi)(\partial_\xi
(u)(\hat{R},\xi))^2\hat{R}(\xi)
(R'(\theta)^2+R(\theta)^2)^{-1/2}d\xi\\
=&0-\int_0^{L}
\frac{1}{2}\Big{(}\int_0^{\hat{R}(\xi)}(\partial_\xi u)^2rdr\Big{)}\partial_\xi\Big{(}(R'(\theta)^2+R(\theta)^2)^{-1/2}\Big{)} d\xi\\
&- \int_0^{L}\frac{1}{2}\hat{R}_\xi(\xi)(\partial_\xi
(u)(\hat{R},\xi))^2\hat{R}(\xi)
(R'(\theta)^2+R(\theta)^2)^{-1/2}d\xi.
\end{split}
\end{equation*}
Note that the construction of $u$ in \cite{acfu} shows that $\partial_\xi u$
vanishes outside a neighborhood of $\Gamma$ of width $2\eps \text{log}^2 \eps$,
which allows us to use the representation of $\partial\Omega_\delta$.

We will use the notation $M \sim \mathcal{O} (\delta^s)$ to mean
that there are constants $C_1, C_2 >0$ such that $C_1\delta^s\leq
M\leq C_2\delta^s$ for $\delta$ sufficiently small.

Returning to the original set
$\Omega=\delta\Omega_\delta$ with arc-length parameter for its
boundary $\tilde{\xi}=\delta \xi$.
Moreover, we frequently use indices to denote derivatives to stay close to the notation of \cite{acfu}.
Then for the distance, $\tilde{R}(\tilde{\xi})$, of the boundary, $\partial \Omega$, from the
origin,  we have
$$\tilde{R}(\tilde{\xi})=\delta\hat{R}(\xi) \sim \mathcal{O}(1),\;\;
\tilde{R}_{\tilde{\xi}}(\tilde{\xi}) \sim \mathcal{O}(1),\;\;
\tilde{R}_{\tilde{\xi}\tilde{\xi}}(\tilde{\xi}) \sim \mathcal{O}(1).$$
So
\begin{equation*}
\begin{split}
&\hat{R}(\xi)\sim\mathcal{O}(\delta^{-1}),\\
&\hat{R}_\xi(\xi)\sim\mathcal{O}(\delta^{-1})\tilde{R}_{\tilde{\xi}}(\tilde{\xi})\tilde{\xi}_\xi\sim
\mathcal{O}(\delta^{-1})\mathcal{O}(1)\delta\sim\mathcal{O}(1),\\
&\hat{R}_{\xi\xi}(\xi)\sim\mathcal{O}(\delta^{-1})\tilde{R}_{\tilde{\xi}\tilde{\xi}}(\tilde{\xi})\tilde{\xi}_\xi^2+
\mathcal{O}(\delta^{-1})\tilde{R}_{\tilde{\xi}}(\tilde{\xi})\tilde{\xi}_{\xi\xi}\sim
\mathcal{O}(\delta^{-1})\mathcal{O}(1)\delta^2\sim\mathcal{O}(\delta).
\end{split}
\end{equation*}
If  $\bar{R}(\theta)$ is the distance from the origin to the
boundary $\partial\Omega$ at angle $\theta$, we have
\begin{equation*}
\begin{split}
&\hat{R}(\xi)=R(\theta),\\
&\delta\hat{R}(\xi)=\tilde{R}(\tilde{\xi})=\bar{R}(\theta)\sim\mathcal{O}(1),\\
&\bar{R}_\theta(\theta)\sim\mathcal{O}(1),\\
&\bar{R}_{\theta\theta}(\theta)\sim\mathcal{O}(1),
\end{split}
\end{equation*}
and thus
\begin{equation*}
\begin{split}
&\bar{R}(\theta)=\delta R(\theta)\sim\mathcal{O}(1),\\
&R(\theta)\sim\mathcal{O}(\delta^{-1}),\\
&R_\theta(\theta)=\delta^{-1}\bar{R}_\theta(\theta)\sim\mathcal{O}(\delta^{-1}),\\
&R_{\theta\theta}(\theta)=\delta^{-1}\bar{R}_{\theta\theta}(\theta)\sim\mathcal{O}(\delta^{-1}).
\end{split}
\end{equation*}
But $R(\theta)=\hat{R}(\xi)$ so
$R_\theta(\theta)=\hat{R}_{\xi}(\xi)\xi_\theta=\hat{R}_{\xi}(\xi)(R_\theta^2+R^2)^{1/2}$.
So
$$
\partial_\xi\Big{(}(R'(\theta)^2+R(\theta)^2)^{-1/2}\Big{)}
=\partial_\xi \Big{(}\frac{\hat{R}_\xi(\xi)}{R_\theta(\theta)}\Big{)}
= \partial_\xi \frac{\hat{R}_{\xi\xi}R_\theta-\hat{R}
R_{\theta\theta}\theta_\xi}{R_\theta^2}.$$
In $\Omega$ we have
$$\frac{\partial\theta}{\partial\tilde{\xi}}\sim \mathcal{O}(1)\,
\quad\text{and}\quad
\tilde{\xi}=\delta\xi\;.$$
Thus
$$\theta_\xi\sim\mathcal{O}(\delta).
$$
So we get
$$\partial_\xi \Big{(}(R'(\theta)^2+R(\theta)^2)^{-1/2}\Big{)}
\sim\frac{\mathcal{O}(\delta)\mathcal{O}(\delta^{-1})-\mathcal{O}(1)
\mathcal{O}(\delta^{-1})\mathcal{O}(\delta)}{\delta^{-2}}\sim\mathcal{O}(\delta^2).
$$
But (cf.\ \cite{acfu}, p. 294) in a neighborhood of $\Gamma$ of width $\eps \text{log}^2\eps$,
using local coordinates $(r,s)$ with $r$ being signed distance from $\Gamma$ and $s$ being arclength along $\Gamma$,
\begin{equation}
 \label{e:dxiuso}
 \partial_\xi u=-\frac{1}{\eps}\Big{\{}\cos(\pi
s/|\Gamma|)\dot{U}(\frac r\eps)+\mathcal{O}(\eps)\Big{\}}=\mathcal{O}(\eps^{-1}),
\end{equation}
where $U$ is the heteroclinic solution to the one-dimensional problem
connecting $\pm 1$, given by (\ref{heterocl}).

The smooth cut-off function maintains this estimate on the support of $\partial_\xi u$,
being a neighborhood of $\Gamma$ of width $2\eps\text{log}^2\eps$.
Furthermore, $\dot{U}$ decays exponentially, as shown in the classical work of Fife and
McLeod \cite{fmcl}.

 So we obtain
\begin{equation*}
\begin{split}
|(\partial_\xi^2 u,\partial_\xi u)|=&\int_0^{L}
\frac{1}{2}\Big{(}\int_0^{\hat{R}(\xi)}|\partial_\xi
(u)|^2rdr\Big{)}|\Big{(}(R'(\theta)^2+R(\theta)^2)^{-1/2}\Big{)}_\xi|
d\xi\\
&+ \int_0^{L}\frac{1}{2}|\hat{R}_\xi(\xi)||\partial_\xi
(u)(\hat{R},\xi)|^2\hat{R}(\xi)
(R'(\theta)^2+R(\theta)^2)^{-1/2}d\xi\\
=&\mathcal{O}(\eps^{-1})\mathcal{O}(\delta^2)
+\mathcal{O}(\eps^{-1})+\text{ higher order terms }\\
=&\mathcal{O}(\eps^{-1}).
\end{split}
\end{equation*}
\end{proof}
We now give estimates for various derivatives of $u$ with respect to $\xi$.
Some of these are already given in \cite{acfu} but are repeated here for completeness of presentation.
\begin{theorem}\label{mainest}
It holds that
\begin{equation}\label{eex}
\|\partial_\xi u\|_{L^1}=\mathcal{O}(1),
\qquad\qquad
\|\partial_\xi u\|= \mathcal{O}(\eps^{-\frac{1}{2}}),
\end{equation}
\begin{equation}\label{estx}
\|\partial_\xi^2 u\|=\mathcal{O}(\eps^{-\frac{3}{2}}),
\quad\text{and}\quad
\|\partial_\xi^3 u\|= \mathcal{O}(\eps^{-\frac{5}{2}}).
\end{equation}
\end{theorem}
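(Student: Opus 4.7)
The plan is to work entirely in the local Fermi coordinates $(r,s)$ near the interface $\Gamma$, where $r$ is the signed distance to $\Gamma$ and $s$ is arclength along $\Gamma$. These coordinates are already employed in the proof of Theorem \ref{fest}, and together with the asymptotic construction of $u$ from \cite{acfu} they give, on a tubular neighborhood of $\Gamma$ of width $2\eps\log^2\eps$, the leading-order representation
\[
u(x,\xi,\eps) = U(r/\eps) + \eps\,u_1(r/\eps,s,\xi) + \eps^2 u_2(r/\eps,s,\xi)+\cdots,
\]
with a smooth cutoff away from the tube, where $U$ is the heteroclinic solution of \eqref{heterocl} and the correctors $u_j$ and their derivatives decay exponentially in $R:=r/\eps$.

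For $\partial_\xi u$, I would use directly the formula \eqref{e:dxiuso} that is already cited from \cite{acfu}. Changing variables $R=r/\eps$ (so $dr=\eps\,dR$) and integrating along $s\in[0,|\Gamma|]$ with $|\Gamma|=\mathcal{O}(1)$, the prefactor $\eps^{-1}$ cancels against the Jacobian $\eps$, and the exponential decay of $\dot U$ gives $\|\partial_\xi u\|_{L^1}=\mathcal{O}(1)$. Squaring the integrand before performing the same change of variables leaves one factor of $\eps^{-1}$, hence $\|\partial_\xi u\|^2=\mathcal{O}(\eps^{-1})$ and $\|\partial_\xi u\|=\mathcal{O}(\eps^{-1/2})$, matching \eqref{eex}.

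For the higher derivatives I would extend the same asymptotic expansion one and two orders further in $\xi$. Each additional application of $\partial_\xi$ acts most singularly by hitting $U(r/\eps)$, producing $\eps^{-1}\dot U(R)\,\partial_\xi r$, and more generally an additional $\eps^{-1}$ factor multiplied by derivatives of $U$ which still decay exponentially in $R$. Since the coordinate map $(r,s)\mapsto x$ and its dependence on $\xi$ are smooth with derivatives of order $\mathcal{O}(1)$ in $\eps$, the pointwise bound
\[
|\partial_\xi^k u(x,\xi,\eps)| \le C\eps^{-k}\,\Phi(r/\eps),
\]
holds on the tube for $k=2,3$ with $\Phi$ exponentially decaying. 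Squaring, integrating in $(r,s)$, and changing variables $R=r/\eps$ then yields $\|\partial_\xi^k u\|^2=\mathcal{O}(\eps^{-2k+1})$, which gives exactly $\|\partial_\xi^2 u\|=\mathcal{O}(\eps^{-3/2})$ and $\|\partial_\xi^3 u\|=\mathcal{O}(\eps^{-5/2})$ as claimed in \eqref{estx}.

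The main obstacle will be the bookkeeping of the $\xi$-dependence of the coordinate frame and of the correctors $u_j$. One must verify that derivatives such as $\partial_\xi r$, $\partial_\xi s$, $\partial_\xi^2 r$, $\partial_\xi^2 s$ are uniformly bounded in $\eps$ (which follows from the geometry of $\partial\Omega_\delta$ and the intrinsic construction of the Fermi coordinates along $\Gamma$), and that the cutoff together with the higher-order asymptotic terms, including the corner corrections near $\partial\Omega_\delta\cap\Gamma$, contribute only at higher order in $\eps$ and therefore do not affect the leading $\eps^{-k+1/2}$ behaviour of the $L^2$-norm. Once this bookkeeping is done, the exponential decay of $U$ and of each corrector in $R$ closes the estimates.
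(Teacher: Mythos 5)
Your proposal is correct and follows essentially the same route as the paper: both work in the local Fermi coordinates $(r,s)$, exploit the fact that each $\partial_\xi$ acts most singularly through the $R=r/\eps$ dependence of the heteroclinic $U$ (giving a factor $\eps^{-1}$ per derivative via the chain rule, since $r_\xi$, $s_\xi$ and their higher $\xi$-derivatives are $\mathcal{O}(1)$), and then recover one factor of $\eps$ from the Jacobian in the change of variables $R=r/\eps$ together with exponential decay of $U$ and its derivatives. The paper simply carries out the bookkeeping you flag as the "main obstacle" in full detail, using the explicit interior/corner-layer decomposition $u=\tilde{u^I}+\tilde{u^B}$ with the bounds \eqref{m3}--\eqref{m7} and the cutoff construction of \cite{acfu} to confirm that the correctors and corner terms only contribute at equal or higher order.
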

\begin{proof}
Following \cite{acfu}, p. 294, in a neighborhood of $\Gamma$ of
width $\eps\text{log}^2\eps$,
\begin{equation}\label{e2}
\partial_\xi u=-\frac{1}{\eps}\Big{\{}\cos(\pi
s/|\Gamma|)\dot{U}(R)+\mathcal{O}(\eps)\Big{\}}.
\end{equation}
where 
$R =\frac{r}{\eps}$, and $(r,s)$ are local coordinates in this neighborhood, with
 $r$  being signed distance and $s$ being arc length along $\Gamma.$ Recall that $U$ is defined as
the heteroclinic solution  to \eqref{heterocl}. See also \eqref{e:dxiuso}.

The function $u$ is the sum of two terms written in local
coordinates, an interior expansion $u^I$ and a
corner layer expansion $u^{B\pm}$:
$$u=u^I+u^{B},$$
where (from \cite{acfu} pp. 251, 254)
\begin{equation}\label{m1}
u^I=U(R)+\eps\sum_{j\geq 0}\eps^ju_j^I(R,\cdots),
\end{equation}
and
\begin{equation}\label{m2}
u^{B}=\sum_{j\geq 1}\eps^ju_j^{B\pm}(R,\cdots).
\end{equation}
Let $D_a^k$ denote the $k$ partial derivative with respect to variable $a$. For
the interior layer expansion it holds that (see \cite{acfu} p. 252)
\begin{equation}\label{m3}
|D_R^mD_s^nD_\xi^lu_j^I|= \mathcal{O}(1),
\end{equation}
for any integer $m,n,l\geq 0$. Since the interior expansion in
local coordinates has a smooth extension to the whole domain (by
\eqref{m1}, this means that $u_j^I$ have smooth extensions to the
whole domain), then
\eqref{m3}, which is true for $u_j^I$ in local coordinates, is true
for their smooth extensions too. On the other hand for a given
$\xi$, the construction of $u^{B\pm}$ in local coordinates (see
\cite{acfu} p. 241) permits as to derive  that
$D_R^mD_s^nD_\xi^lu_j^{B\pm}$ are bounded uniformly in $\eps$, i.e.,
\begin{equation}\label{m4}
|D_R^mD_s^nD_\xi^lu_j^{B\pm}| = \mathcal{O}(1),
\end{equation}
for any integer $m,n,l\geq 0$.

Furthermore, $u$ is smoothly extended to the whole domain, cf.\ p. 257
of \cite{acfu}, and thus, is given as
$$u=\tilde{u^I}+\tilde{u^B},$$ for $\tilde{u^I}$ and $\tilde{u^B}$ suitable modifications of
$u^I$ and $u^B$ respectively. More specifically, for the cut-off function $\zeta\in
C^\infty$ with $\zeta(s)=1$ if $s>1$, $\zeta(s)=0$ if $s<0$ and
$s\zeta'(s)>0$ on $\mathbb{R}$, $\tilde{u^I}$ is given as
\begin{equation}\label{m1*}
\begin{split}
\tilde{u^I}:=&(1-\zeta^{+}-\zeta^{-})(U(R)+\eps\sum_{j\geq
0}\eps^ju_j^I(R,\cdots))+\zeta^{+}u^{+}(\xi)+\zeta^{-}u^{-}(\xi)\\
=&(1-\zeta^{+}-\zeta^{-})u^I+\zeta^{+}u^{+}(\xi)+\zeta^{-}u^{-}(\xi),
\end{split}
\end{equation}
where
$$\zeta^{\pm}=\zeta(\pm\tfrac{r(\xi,\cdots)}{\eps \ln\eps^2}-1),
\quad\text{and}\quad
u^{\pm}=\pm 1+\eps\sum_{j\geq 0}\eps^ju_j^{\pm}.
$$
Note that $u_j^{\pm}$ are smooth and uniformly bounded in $\eps$,
and the same holds for their derivatives of any order. Easily, by
taking the derivative in $\xi$ we obtain
\begin{equation*}
\begin{split}
\partial_\xi(\tilde{u^I})=&\partial_\xi(1-\zeta^{+}-\zeta^{-})u^I
+(1-\zeta^{+}-\zeta^{-})\partial_\xi(u^I)+
\partial_\xi(\zeta^{+})u^{+}+\zeta^{+}\partial_\xi(u^{+})
+\partial_\xi(\zeta^{-})u^{-}+\zeta^{-}\partial_\xi(u^{-})\\
=&\frac{1}{\eps\ln\eps^2}\mathcal{O}(u^I)+\frac{1}{\eps}\mathcal{O}(u^I)
+\frac{1}{\eps\ln\eps^2}\mathcal{O}(1)+\mathcal{O}(u^I)\sim\mathcal{O}(\partial_\xi(u^I)),
\end{split}
\end{equation*}
where we used that
$\frac{1}{\eps}u^I = \mathcal{O}(\partial_\xi u^I)$ (to be proved
in the sequel).
So, the order of $\partial_\xi(\tilde{u^I})$ is
this of $\partial_\xi(u^I)$, while by induction the same is
happening for the derivatives of higher order i.e.,\
$\partial_\xi^m(\tilde{u^I})\sim\mathcal{O}(\partial_\xi^m(u^I))$
for $m=1,2,3$. In the sequel, we compute
$\mathcal{O}(\partial_\xi^m(u^I))$ in detail.

An analogous construction for $\tilde{u^B}$ will give the same
result, which means that
$\partial_\xi^m(\tilde{u^B})\sim\mathcal{O}(\partial_\xi^m(u^B))$,
so it is sufficient if we estimate $\partial_\xi^m(u^B)$ for $m=1,2,3$.

In addition, $D_\xi^ls$, $D_\xi^lr$ exist for  $l\geq 0$,
and is uniformly bounded for any $\eps$ (for convenience
cf.\ in \cite{acfu} p. 294, the formulas $r_\xi=-\cos(\pi
s/|\Gamma|)+\mathcal{O}(\delta)$ and
$s_\xi=\mathcal{O}(\eps+h+|r|)$) so,
\begin{equation}\label{m5}
|D_\xi^ls|,\;\;|D_\xi^lr|< C,
\end{equation}
for any $l\geq 0$ uniformly in $\eps$.

But, cf.\ \cite{acfu} p. 258
$$\frac{\partial x}{\partial(R,s)}=\eps(1+\eps
R\mathcal{K}),$$ thus, for any $g=g(R,\cdots)$ of compact support,
it holds that (as in p. 258 of \cite{acfu})
\begin{equation*}
\int_{\Omega_\delta}g(R,\cdots)dx=\eps\int_{-\infty}^{+\infty}dR
\int_{S^-(R,\xi,\eps)}^{S^+(R,\xi,\eps)}(1+\eps
R\mathcal{K})g(R,\cdots)ds,
\end{equation*}
for the resulting $S^-$ and $S^+$, where the change of variables
from local to global coordinates is valid. Since $\Omega_\delta$
is bounded and $R=\frac{r}{\eps}$ then if $g(R,\cdots)$ is bounded
uniformly for any $\eps$ for any $R\in
\mathbb{R}$ then 
\begin{equation}\label{gencomp}
\int_{\Omega_\delta}g(R,\cdots)dx=\mathcal{O}(1).
\end{equation}

Using the above considerations then we can derive the estimates
of $\|\partial_\xi u\|_{L^1}$, $\|\partial_\xi u\|$,
$\|\partial_\xi^2 u\|$ and $\|\partial_\xi^3 u\|$, where
$\partial_\xi u=\frac{\partial}{\partial\xi}(u(r,s,\xi,\eps))$
and so on.

By using \eqref{e2} we obtain
\begin{equation*}
\begin{split}
\int_{\Omega_\delta}|\partial_\xi
(u)|dx=&\mathcal{O}\Big{(}\eps\int_{-\infty}^{+\infty}dR
\int_{S^-(R,\xi,\eps)}^{S^+(R,\xi,\eps)}(1+\eps R\mathcal{K})|\partial_\xi u|ds\Big{)}\\
=&\mathcal{O}\Big{(}\eps\int_{-\infty}^{+\infty}dR
\int_{S^-(R,\xi,\eps)}^{S^+(R,\xi,\eps)}(1+\eps
R\mathcal{K})\frac{1}{\eps}|\cos(\pi
s/|\Gamma|)||\dot{U}(R)|ds\Big{)}\\
&\qquad
+ \mathcal{O}\Big{(}\eps\int_{a}^{b}dR
\int_{S^-(R,\xi,\eps)}^{S^+(R,\xi,\eps)}(1+\eps R\mathcal{K})
\frac{1}{\eps}\mathcal{O}(\eps)ds\Big{)}\\
=&\mathcal{O}(1),
\end{split}
\end{equation*}
i.e.,\ we proved the left equation in \eqref{eex}. Here, we used that
$\int_{-\infty}^{+\infty}|S^+(R,\xi,\eps)-S^-(R,\xi,\eps)||\dot{U}(R)|dR$
is bounded uniformly in $\eps$ and that the order for
$\partial_\xi u$ is given by the order of the local
representation $\partial_\xi(u^I+u^B)$, estimated by \eqref{e2}.

Further it follows that
\begin{equation*}
\begin{split}
\int_{\Omega_\delta}|\partial_\xi
(u)|^2dx&=\mathcal{O}\Big{(}\eps\int_{-\infty}^{+\infty}dR
\int_{S^-(R,\xi,\eps)}^{S^+(R,\xi,\eps)}(1+\eps R\mathcal{K})|\partial_\xi u|^2ds\Big{)}\\
&=\mathcal{O}\Big{(}\eps\int_{-\infty}^{+\infty}dR
\int_{S^-(R,\xi,\eps)}^{S^+(R,\xi,\eps)}(1+\eps
R\mathcal{K})\frac{1}{\eps^2}|\cos(\pi
s/|\Gamma|)|^2|\dot{U}(R)|^2ds+C\Big{)}=\mathcal{O}(\eps^{-1}),
\end{split}
\end{equation*}
so, we get the right equality in \eqref{eex}. Here, we used that
$\int_{-\infty}^{+\infty}|S^+(R,\xi,\eps)-S^-(R,\xi,\eps)||\dot{U}(R)|^2dR$
is bounded uniformly in $\eps$.

Obviously, since
$$
\partial_\xi u=u_rr_\xi+u_ss_\xi+u_\xi,
$$
it holds that
\begin{equation*}
\begin{split}
\partial_\xi^2 u=&(u_{rr}r_\xi+u_{rs}s_\xi+u_{r\xi}\xi_\xi)r_\xi
+u_rr_{\xi\xi}\\
&+(u_{sr}r_\xi+u_{ss}s_\xi+u_{s\xi}\xi_\xi)s_\xi
+u_ss_{\xi\xi}\\
&+u_{\xi r}r_\xi+u_{\xi s}s_\xi+u_{\xi\xi}\xi_\xi.
\end{split}
\end{equation*}
Hence, the worst term is $u_{rr}$ multiplied by a uniformly (in $\eps$)
bounded  quantity. More specifically, using \eqref{m1},
\eqref{m2}, \eqref{m3}, \eqref{m4} and \eqref{m5}, we observe
that any derivative in $r$ gives $\eps^{-1}$ since
$R=\frac{r}{\eps}$, while all the other derivatives of $s,r$ are
uniformly bounded in $\eps$ and the same is true for the
derivatives of $u$ in $s,\xi$. Further, considering the term
$\partial^3_\xi(u)$, the chain rule analogously gives that the
worst order is given by $u_{rrr}$.

In details, \eqref{m1} and \eqref{m3} give for $m,n,l\geq 0$
\begin{equation}\label{m6}
\begin{split}
D_r^mD_s^nD_\xi^lu^I&=D_rU(R)+\eps\eps^{-m}\sum_{j\geq
0}\eps^jD_R^mD_s^nD_\xi^lu_j^I\\
&\leq \eps^{-m}D_R^mU(R)+C\eps\eps^{-m},
\end{split}
\end{equation}
while by \eqref{m2} and \eqref{m4} we obtain
\begin{equation}\label{m7}
\begin{split}
D_r^mD_s^nD_\xi^lu^B&=\eps^{-m}\sum_{j\geq
1}\eps^jD_R^mD_s^nD_\xi^lu_j^{B\pm}\\
&\leq C\eps^{-m}\sum_{j\geq 1}\eps^j\leq C\eps^{-m}\eps.
\end{split}
\end{equation}
Since $u_{rr}=u_{rr}^I+u_{rr}^B$, then taking $m=2$ we get using
\eqref{m6} and \eqref{m7}, and the regularity of the second
derivative of the heteroclinic,
\begin{equation*}
\begin{split}
\int_{\Omega_\delta}|\partial_\xi^2 u|^2dx&\leq
C\eps\int_{-\infty}^{+\infty}dR\int_{S^-}^{S^+}(1+\eps
R\mathcal{K})|u_{rr}^I|^2ds+C\eps\int_{-\infty}^{+\infty}dR\int_{S^-}^{S^+}(1+\eps
R\mathcal{K})|u_{rr}^B|^2ds\\
&\leq C\eps\int_{-\infty}^{+\infty}\eps^{-2\cdot
2}dR\int_{S^-}^{S^+}(1+\eps
R\mathcal{K})|D_R^2U(R)|^2ds+C\eps^2\eps^{-2\cdot
2}+C\eps^{-2\cdot 2}\eps^2
\\&
\leq C\eps^{-3},
\end{split}
\end{equation*}
and thus, in accordance to \cite{acfu} p. 297, we proved the left statement in
\eqref{estx}, i.e.,\
\begin{equation*}
\|\partial_\xi^2 u\|=\mathcal{O}(\eps^{-\frac{3}{2}}).
\end{equation*}
For the previous relation we used that
$\int_{-\infty}^{+\infty}|S^+(R,\xi,\eps)-S^-(R,\xi,\eps)||D_R^2U(R)|^2dR$
is bounded uniformly in $\eps$.

The analogous computation by taking $m=3$ in \eqref{m6} and
\eqref{m7}, since the third derivative of the heteroclinic is
regular, yields
\begin{equation*}
\begin{split}
\int_{\Omega_\delta}|\partial_\xi^3 u|^2dx&\leq
C\eps\int_{-\infty}^{+\infty}dR\int_{S^-}^{S^+}(1+\eps
R\mathcal{K})|u_{rrr}^I|^2ds+C\eps\int_{-\infty}^{+\infty}dR\int_{S^-}^{S^+}(1+\eps
R\mathcal{K})|u_{rrr}^B|^2ds\\
&\leq C\eps\int_{-\infty}^{+\infty}\eps^{-2\cdot 3}dR
\int_{S^-}^{S^+}(1+\eps
R\mathcal{K})|D_R^3U(R)|^2ds+C\eps^2\eps^{-2\cdot
3}+C\eps^{-2\cdot 3}\eps^2\leq C\eps^{-5}.
\end{split}
\end{equation*}
Thus, we proved the right inequality in \eqref{estx}, i.e., that
\begin{equation*}
\|\partial_\xi^3 u\|= \mathcal{O}(\eps^{-\frac{5}{2}}).
\end{equation*}
Here, we used that
$\int_{-\infty}^{+\infty}|S^+(R,\xi,\eps)-S^-(R,\xi,\eps)||D_R^3U(R)|^2dR$
is bounded uniformly in $\eps$.
\end{proof}
\begin{remark}
We note that by  \cite[p. 297]{acfu}  also
$\|\partial_\xi u\|\geq C\eps^{-\frac{1}{2}}.$ Thus
there exist $0<C_1\leq C_2$ such that
\[
C_1\eps^{-\frac{1}{2}}\leq
\|\partial_\xi u\|\leq C_2\eps^{-\frac{1}{2}}.
\]
\end{remark}
\begin{remark}
We used regularity for the heteroclinic $U(R)$ up to derivatives of third
order, see \cite{fmcl}.
\end{remark}
\begin{remark}
From the proof of Theorem \ref{mainest} it is obvious that for
any $k\geq 1$
$$
\|\partial_x^k u\|\sim \|\partial_\xi^k(u)\|,
\quad\text{and}\quad
\|\partial_x^k u\|_{\infty}\sim \|\partial_\xi^k(u)\|_\infty.
$$
Furthermore, by
\cite{acfu}, it holds that
$$\|\partial_\xi u\|_{\infty}\leq C\eps^{-1},
\quad\text{and thus}
\|\nabla u\|_{\infty}\leq C\eps^{-1}.
$$
\end{remark}


\begin{thebibliography}{99}

\bibitem{abc}
{\sc N.~D. Alikakos, P.~W. Bates, X. Chen}, {\em Convergence of
the Cahn-Hilliard Equation to the Hele-Shaw Model}, Arch. Rat.
Mech. Anal., 128 (1994), pp.~165--205.
\bibitem{acfu}
{\sc N.~D. Alikakos, X. Chen, G. Fusco}, {\em Motion of a droplet
by surface tension along the boundary}, Cal. Var. 11 (2000),
pp.~233--305.
\bibitem{AC}
{\sc S. Allen, J.~W. Cahn},
%
{\em A microscopic theory for antiphase boundary motion and its
application to antiphase domain coarsening},
%
Acta. Metall., 27 (1979), pp.~1084--1095.

\bibitem{abk}
{\sc D.~C. Antonopoulou, D. Bl\"omker, G.~D. Karali}, {\em Front
motion in the one-dimensional stochastic Cahn-Hilliard equation},
SIAM J. Math. Anal., 44-5 (2012), pp.~3242--3280.

\bibitem{BJ}
{\sc P.~W. Bates, J. Jin},
%
{\em Global Dynamics of Boundary Droplets}, (2012),
Discrete and Continuous Dynamical Systems - Series A, 34 (2014) 1--17.

\bibitem{BX1}
{\sc P.~W. Bates, J. Xun},
%
{\em Metastable Patterns for the Cahn-Hilliard Equation, Part I},
%
J. Differential Equations, 111 (1994), pp.~421--457.

\bibitem{BX2}
{\sc P.~W. Bates, J. Xun},
%
{\em Metastable Patterns for the Cahn-Hilliard Equation: Part II.
Layer Dynamics and Slow Invariant Manifold},
%
J. Differential Equations, 117 (1995), pp.~165--216.



\bibitem{BG:book}
{\sc N. Berglund, B. Gentz}
{\em Noise-Induced Phenomena in Slow-Fast Dynamical Systems. A Sample-Paths Approach.}
Probability and its Applications, Springer, (2006).

\bibitem{Wa1}
{\sc D.~Bl\"omker, B.~Gawron, and T.~Wanner},
{\em Nucleation in the one-dimensional stochastic Cahn-Hilliard model}
Discrete and Continuous Dynamical Systems, Series A 27(1) (2010), pp.~25--52.

\bibitem{BPW}
{\sc D. Bl\"omker, S. Maier-Paape, T. Wanner},
%
{\em Spinodal decomposition for the Cahn-Hilliard-Cook equation},
%
Comm. Math. Phys., 223(3) (2001), pp.~553--582.

\bibitem{B}
{\sc A.~J. Bray}, {\em Domain Growth and Coarseninig}, Lecture
given at the NATO Advanced Study Institute on Phase Transitions
and Relaxation in Systems with Competing Energy Scales', April
13-23, Geilo, Norway.

\bibitem{CGS}
{\sc J. Carr, M. Gurtin, M. Slemrod}, {\em Structured phase
transitions on a finite interval}, Arch. Rat. Mech. Anal., 86
(1984), pp.~317--351.

\bibitem{CK}
{\sc Xinfu Chen, M. Kowalczyk}, {\em Existence of equilibria for
the Cahn-Hilliard equation via local minimizers of perimeter},
Comm. PDE, 21 (1996), pp.~1207--1233.

\bibitem{DPZ92}
{\sc G. Da Prato, J. Zabczyk}, Stochastic equations in infinite
dimensions, {\sl Cambridge University Press, Cambridge 1992}.

\bibitem{Wa2}
{\sc J.~P.~Desi, H.~Edrees, J.~J.~Price, E.~Sander. and T.~Wanner},
{\em The dynamics of nucleation in stochastic Cahn-Morral systems.}
SIAM Journal on Applied Dynamical Systems
10(2) (2011), pp.~707--743.

\bibitem{fife2}
{\sc P.~C. Fife}, {\em Models For Phase Separation and Their
Mathematics}, 
 Electron. J. Diff. Eqns. Vol. 2000, No. 48, 1--26 (2000)


\bibitem{fife3}
{\sc P.~C. Fife}, {\em Dynamics of internal layers and diffusive
interfaces}, CBMS-NSF 53, SIAM, 1988.

\bibitem{fmcl}
{\sc P. Fife, J.~B. McLeod}, {\em The approach of solutions of
nonlinear diffusion equations to travelling front solutions},
Arch. Rat. Mech. Anal., 65 (1977), pp.~335--361.

\bibitem{hiweb}
{\sc M. Hairer, M. D. Ryser, H. Weber}, {\em Triviality of the 2D
stochastic Allen-Cahn equation}, Electr. Journ. Probab. 17 (2012), no. 39, pp. 1--14.

\bibitem{hohenberg}
{\sc P.~C. Hohenberg, B.~I. Halperin}, {\em Theory of dynamic
critical phenomena}, Rev. mod. Phys., 49 (1977), pp.~435--479.

\bibitem{M}
{\sc L. Modica}, {\em Gradient theory of phase transition and
singular perturbation}, Arc. Rat. Mech. Anal., 107 (1989),
pp.~123--142.

\bibitem{MM}
{\sc L. Modica, S. Mortola}, {\em Un esempio di
$\Gamma$-Convergenza}, Boll. Un. Mat. Ital. B, 14 (1977),
pp.~285--299.

\bibitem{OKSEND}
{\sc B. {\rm \O}ksendal}, Stochastic Differential Equations,
Springer, New York, 2003.

\bibitem{OS}
{\sc N.~C. Owen, P. Stenberg}, {\em Gradient flow and front
propagation with boundary condact energy}, Proc. Roy.Soc. Lond.
Ser A, 437 (1992), pp.~715--728.

\bibitem{rogweb}
{\sc M. R\"oger, H. Weber}, {\em Tightness for a stochastic
Allen--Cahn equation}, Stochastic Partial Differential Equations: Analysis and Computations, 1(1), (2013), pp. 175--203.

\bibitem{rsten}
{\sc J. Rubistein, P. Sternberg}, {\em Nonlocal
Reaction-Diffusion Equations and nucleation}, IMA Journal of
Applied Mathematics, 48 (1992), pp.~249--264.

\bibitem{S}
{\sc P. Stenberg}, {\em The effect of a singular perturbation on
non-convex variational problems}, Arc. Rat. Mech. Anal., 101
(1988), pp.~209--260.



\bibitem{weber2}
{\sc H. Weber}, {\em Sharp interface limit for invariant measures
of a stochastic Allen-Cahn equation}, Comm. Pure Appl. Math.
63(8) (2010), pp.~1071--1109.


\bibitem{Weber:thesis}{\sc S. Weber},
{\em The sharp interface limit of the stochastic Allen-Cahn equation in one space-dimension}
University of Warwick, PhD thesis, 2015


\end{thebibliography}
\end{document}